\newcommand{\cred}[1]{{\color{red}#1}}
\newcommand{\defbox}[1]{\noindent\fbox{\parbox{0.975\textwidth}{#1}}}
\newcommand{\thmbox}[1]{\noindent\fbox{\parbox{0.975\textwidth}{#1}}}
\newtheorem{lem}{Lemma}[section]
\newtheorem{defi}[lem]{Definition}
\newtheorem{theo}[lem]{Theorem}
\newtheorem{prop}[lem]{Proposition}
\newtheorem{cor}[lem]{Corollary}
\newtheorem{rem}[lem]{Remark}
\numberwithin{equation}{section}
\newcommand\bra[1]{(#1)}
\newcommand{\pushright}[1]{\ifmeasuring@#1\else\omit\hfill$\displaystyle#1$\fi\ignorespaces}
\newcommand{\pushleft}[1]{\ifmeasuring@#1\else\omit$\displaystyle#1$\hfill\fi\ignorespaces}
\newcommand{\mr}[1]{\mathring{#1}}
\def\dif{\mathrm{d}}
\def\sfX{\mathbf{X}}
\def\sfY{\mathbf{Y}}
\def\sfW{\mathbf{W}}
\renewcommand{\L}[2]{\sfL^{#1}_{#2}}
\renewcommand{\H}[2]{\sfH^{#1}_{#2}}
\def\X{\sfX}
\def\Y{\sfY}
\def\W{\sfW}
\def\null{\mathcal{N}}
\def\range{\mathcal{R}}
\def\dom{\mathcal{D}}
\def\x{\mathbf{x}}
\def\y{\mathbf{y}}
\def\v{\mathbf{v}}
\def\w{\mathbf{w}}
\def\z{\mathbf{z}}
\def\u{\mathbf{u}}
\def\H{\mathbf{H}}
\def\Wp{\mathbf{W}^{+}}
\def\Ht{\mathbf{H}^{1}_t(\Omega)}
\def\Hn{\mathbf{H}^{1}_n(\Omega)}
\def\Wm{\mathbf{W}^{-}}
\def\Ltwo{L^2(\Omega)}
\def\bLtwo{\mathbf{L}^2(\Omega)}
\def\Hcurl{\mathbf{H}(\mathbf{curl},\Omega)}
\def\Hdiv{\mathbf{H}(\text{div},\Omega)}
\def\Hcurlnot{\mr{\mathbf{H}}(\mathbf{curl},\Omega)}
\def\Hdivnot{\mr{\mathbf{H}}(\text{div},\Omega)}
\def\Honenot{\mr{H}^1(\Omega)}
\def\AT{\A^{\top}}
\def\trace{\mathsf{T}^{t}}
\def\traceA{\mathsf{T}^{t}_{k}}
\def\dualtrace{\mathsf{T}^{n}}
\def\Hone{H^1(\Omega)}
\def\tracespace{\mathcal{T}(\A_k)}
\def\dualtracespace{\mathcal{T}(\AT_k)}
\def\S{\mathsf{S}}
\DeclareMathOperator{\A}{\mathsf{A}}
\def\D{\mathsf{D}}
\def\S{\mathsf{S}}
\def\L{\mathsf{L}}
\DeclareMathOperator{\rA}{\mr{\A}}
\DeclareMathOperator{\id}{\mathsf{Id}}
\newcommand{\norm}[1]{\|#1\|}
\definecolor{electricindigo}{rgb}{0.44, 0.0, 1.0}
\definecolor{ultramarineblue}{rgb}{0.25, 0.4, 0.96}
\definecolor{darkspringgreen}{rgb}{0.09, 0.45, 0.27}
\definecolor{deepcarminepink}{rgb}{0.94, 0.19, 0.22}
\NewDocumentCommand{\whitetriangle}{}
{
  \mathord{ \text { \sukan_triangle:n { stroke } } }
}
\NewDocumentCommand{\filledtriangle}{}
{
  \mathord{ \text { \sukan_triangle:n { fill } } }
}
\newcounter{exampleCounter}
\renewcommand{\theexampleCounter}{\Roman{exampleCounter}}
\newenvironment*{example}[1]
{
  \refstepcounter{exampleCounter}
  \par\medskip%
  \begin{small}
  \noindent\textbf{3D de Rham setting \theexampleCounter: #1.}
  \ignorespaces\noindent
}
{\hfill $\scriptsize\whitetriangle$
\end{small}
\par\medskip%
}%
\newcounter{statementCounter}
\renewcommand{\thestatementCounter}{\Alph{statementCounter}}
\newenvironment*{statement}
{
  \refstepcounter{statementCounter}
  \par\medskip%
  \noindent\textbf{{\color{blue}Assumption \thestatementCounter}.}
  \itshape
  \ignorespaces\noindent
}
\newsavebox{\@brx}
\newcommand{\llangle}[1][]{\savebox{\@brx}{\(\m@th{#1\langle}\)}%
  \mathopen{\copy\@brx\kern-0.5\wd\@brx\usebox{\@brx}}}
\newcommand{\rrangle}[1][]{\savebox{\@brx}{\(\m@th{#1\rangle}\)}%
  \mathclose{\copy\@brx\kern-0.5\wd\@brx\usebox{\@brx}}}
\title[Traces for Hilbert Complexes]
{Traces for Hilbert Complexes}
\author{Ralf Hiptmair}
\author{Dirk Pauly}
\author{Erick Schulz}
\address{SAM - Seminar for Applied Mathematics, ETH Z\"urich, Switzerland}
\email[Ralf Hiptmair]{ralfh@ethz.ch}
\email[Erick Schulz]{erick.schulz@sam.math.ethz.ch}
\address{Institut f\"ur Analysis, Technische Universit\"at Dresden, Germany}
\email[Dirk Pauly]{dirk.pauly@tu-dresden.de}
\subjclass{}
\thanks{}
\begin{document}

\def\titlerepude{\sf Traces for Hilbert Complexes}
\def\authorrepude{Ralf Hiptmair \& Dirk Pauly \& Erick Schulz}
\def\daterepdue{\today}
\def\reportudemathyesno{no}
\def\reportudemathnumber{SM-UDE-828}
\def\reportudemathyear{2021}
\def\reportudematheingang{\daterepdue}
\newcommand{\preprintudemath}[5]{
  \thispagestyle{empty}
  \begin{center}\normalsize SCHRIFTENREIHE DER FAKULT\"AT F\"UR MATHEMATIK\end{center}
  \vspace*{5mm}
  \begin{center}#1\end{center}
  \vspace*{5mm}
  \begin{center}by\end{center}
  \vspace*{0mm}
  \begin{center}#2\end{center}
  \vspace*{5mm}
  \normalsize 
  \begin{center}#3\hspace{69mm}#4\end{center}
  \newpage
  \thispagestyle{empty}
  \vspace*{210mm}
  Received: #5
  \newpage
  \addtocounter{page}{-2}
  \normalsize
}
\ifthenelse{\equal{\reportudemathyesno}{yes}}
{\preprintudemath{\titlerepude}{\authorrepude}{\reportudemathnumber}{\reportudemathyear}{\reportudematheingang}}
{}


\begin{abstract}
  We study a new notion of trace operators and trace spaces for abstract Hilbert
  complexes. We introduce trace spaces as quotient spaces/annihilators. We characterize
  the kernels and images of the related trace operators and discuss duality relationships
  between trace spaces. We elaborate that many properties of the classical boundary traces
  associated with the Euclidean de Rham complex on bounded Lipschitz domains are rooted in the
  general structure of Hilbert complexes. We arrive at abstract trace Hilbert complexes
  that can be formulated using quotient spaces/annihilators. We show that, if a Hilbert
  complex admits stable ``regular decompositions'' with compact lifting operators, then
  the associated trace Hilbert complex is Fredholm. Incarnations of abstract concepts and results
  in the concrete case of the de Rham complex in three-dimensional Euclidean space will
  be discussed throughout.
\end{abstract}


\maketitle
\setcounter{tocdepth}{3}


\section{Introduction}
\label{sec: intro}

\subsection{Starting point: the de Rham complex}
In vector-analytic notation, the $L^2$ de Rham complex in a bounded domain
$\Omega\subset\mathbb{R}^3$ reads\footnote{Throughout, we use special arrows to indicate
  properties of mappings: `$\twoheadrightarrow$' for surjectivity, `$\hookrightarrow$' for
  injectivity and `$\dashrightarrow$' for isometry.}
\begin{equation}
  \label{L2 de Rham complex intro}
  \def\arrowlength{5.1ex}
  \def\arrowdistance{.4}
  \begin{tikzcd}[column sep=\arrowlength]
    \mathbb{R}
    \arrow[r, hookrightarrow, shift left=\arrowdistance, "\imath_{\mathbb{R}}"] 
    & 
    {L}^2(\Omega) 
    \arrow[r, rightarrow, shift left=\arrowdistance, "\mathbf{grad}"] 
    & 
    {L}^2(\Omega) 
    \ar[r, rightarrow, shift left=\arrowdistance, "\mathbf{curl}"] 
    & 
    {L}^2(\Omega) 
    \arrow[r, rightarrow, shift left=\arrowdistance, "\text{div}"] 
    & 
    {L}^2(\Omega)
    \arrow[r, twoheadrightarrow, shift left=\arrowdistance, "\pi_{\{0\}}"] 
    & 
    \{0\}.
  \end{tikzcd}
\end{equation}
It involves unbounded first-order differential operators inducing the domain Hilbert complex
\begin{equation}
  \label{L2 domain de Rham complex intro}
  \def\arrowlength{5.1ex}
  \def\arrowdistance{.4}
  \begin{tikzcd}[column sep=\arrowlength]
    \mathbb{R}
    \arrow[r, hookrightarrow, shift left=\arrowdistance, "\imath_{\mathbb{R}}"] 
    & 
    {H}^1(\Omega) 
    \arrow[r, rightarrow, shift left=\arrowdistance, "\mathbf{grad}"] 
    & 
    \Hcurl 
    \ar[r, rightarrow, shift left=\arrowdistance, "\mathbf{curl}"] 
    & 
    \Hdiv
    \arrow[r, rightarrow, shift left=\arrowdistance, "\text{div}"] 
    & 
    {L}^2(\Omega)
    \arrow[r, twoheadrightarrow, shift left=\arrowdistance, "\pi_{\{0\}}"] 
    & 
    \{0\},
  \end{tikzcd}
\end{equation}
where customary notation for Sobolev spaces equipped with graph inner products was
adopted\footnote{For instance, the spaces $\Hone$, $\Hcurl$ and $\Hdiv$ are discussed in
  \cite{girault2012finite}. They are equipped with the obvious graph norms making the
  operators involved in the domain Hilbert complex trivially bounded. In the Euclidean
  setting, we distinguish vector quantities from scalars by using a bold font.}. Taking
the closure of compactly supported functions in these Sobolev spaces and tagging the
resulting closed subspaces with `$\,\circ\,$' on top, we obtain a subcomplex
\begin{equation}
  \label{L2 domain de Rham complex circ intro}
  \def\arrowlength{5.1ex}
  \def\arrowdistance{.4}
  \begin{tikzcd}[column sep=\arrowlength]
    \{0\}
    \arrow[r, hookrightarrow, shift left=\arrowdistance, "\imath"] 
    & 
    \Honenot
    \arrow[r, rightarrow, shift left=\arrowdistance, "\mathbf{grad}"] 
    & 
    \Hcurlnot
    \ar[r, rightarrow, shift left=\arrowdistance, "\mathbf{curl}"] 
    & 
    \Hdivnot
    \arrow[r, rightarrow, shift left=\arrowdistance, "\text{div}"] 
    & 
    L^2(\Omega)
    \arrow[r, twoheadrightarrow, shift left=\arrowdistance, "0\,\,\,"] 
    & 
    \{0\},
  \end{tikzcd}
\end{equation}
giving rise to the following structure:
\begin{equation}\label{structure Hilbert complexes 3D de Rham}
  \def\arrowlength{6ex}
  \def\arrowdistance{.8}
  \begin{tikzcd}[column sep=\arrowlength, row sep=0.05cm]
    \Hone
    \arrow[r, rightarrow, shift left=\arrowdistance, "\mathbf{grad}"] 
    & 
    \Hcurl
    \ar[r, rightarrow, shift left=\arrowdistance, "\mathbf{curl}"] 
    & 
    \Hdiv
    \arrow[r, rightarrow, shift left=\arrowdistance, "\text{div}"] 
    & 
    \Ltwo \\
    \cup 
    & 
    \cup 
    & 
    \cup 
    &
    \cup  \\
    \Honenot
    \arrow[r, rightarrow, shift left=\arrowdistance, "\mathbf{grad}"] 
    & 
    \Hcurlnot
    \ar[r, rightarrow, shift left=\arrowdistance, "\mathbf{curl}"] 
    & 
    \Hdivnot
    \arrow[r, rightarrow, shift left=\arrowdistance, "\text{div}"] 
    & 
    \Ltwo.
  \end{tikzcd}
\end{equation}

\subsection{The de Rham complex and trace operators }
The focus of this work is on trace operators. For the de Rham complex above, those are
usually introduced as linear mappings of functions in $\Omega$ to functions on
$\Gamma=\partial\Omega$. The classical traces are obtained by extending the restriction
operators\footnote{We denote by $\mathbf{n}\in\mathbf{L}^\infty(\Gamma)$ the exterior unit
  normal vector-field on the boundary $\Gamma$.}
\begin{subequations}
  \label{eq:ctr}
  \begin{align}
    \gamma u&:=u\big\vert_{\Gamma}\label{eq: def Dirichlet trace} &&\text{(pointwise trace)},\\
    \gamma_{t}\u&:=\mathbf{n}\times(\u\big\vert_{\Gamma}\times\mathbf{n}) \label{eq: def t
      trace}&&\text{(pointwise tangential component trace)},\\
    \gamma_n\u&:=\u\big\vert_{\Gamma}\cdot\mathbf{n} \label{eq: def neumann
      trace}&&\text{(pointwise normal component trace)},
  \end{align}
\end{subequations}
to continuous and surjective mappings from the Sobolev spaces involved in the domain de
Rham complex to so-called trace spaces whose characterization is the main assertion of the
standard trace theorems for a Lipschitz domain $\Omega$:
\begin{subequations}
  \label{eq:euctr}
  \begin{align}
    \gamma &: H^1\bra{\Omega}\rightarrow H^{1/2}\bra{\Gamma} &&\text{\cite[Thm. 4.2.1]{Hsiao2008}}\label{eq: ext of Dirichlet trace},\\
    \gamma_{t}&:\mathbf{H}\bra{\mathbf{curl},\Omega} \rightarrow \mathbf{H}^{-1/2}(\text{curl}_\Gamma,\Gamma) &&\text{\cite[Thm. 4.1]{buffa2002traces}}\label{eq: ext of t trace},\\
    \gamma_n &: \mathbf{H}(\text{div},\Omega)\rightarrow H^{-1/2}\bra{\Gamma}&&\text{\cite[Thm. 2.5, Cor. 2.8]{girault2012finite}}. \label{eq: ext normal trace}   
  \end{align}
\end{subequations}

The classical trace spaces can be defined based on the vector-valued rotated surface
gradient $\mathbf{curl}_{\Gamma}$ and the scalar-valued surface rotation
$\text{curl}_{\Gamma}$ as
\begin{subequations}
  \label{eq:17}
  \begin{align}
    H^{1/2}(\Gamma):=\left\{\,\bm{\phi}\in H^{-1/2}(\Gamma)\,\,\vert\,\,\mathbf{curl}_{\Gamma}\,\bm{\phi}\in \mathbf{H}_t^{-1/2}(\Gamma)\,\right\},\label{def:minushalf trace space}\\
    \mathbf{H}^{-1/2}(\text{curl}_\Gamma,\Gamma):=\left\{\,\bm{\phi}\in \mathbf{H}_t^{-1/2}(\Gamma)\,\,\vert\,\,\text{curl}_{\Gamma}\,\bm{\phi}\in H^{-1/2}(\Gamma)\,\right\},\label{def:minushalf curl trace space}
  \end{align}
\end{subequations}
where $\mathbf{H}_t^{-1/2}(\Gamma)$ is defined as the dual of the range of the tangential
trace applied to $\mathbf{H}^1(\Omega)$. The mathematical theory of the pointwise trace
$\gamma$ is well established, cf. \cite[Chap. 3]{mclean2000strongly}. That for the normal
component trace $\gamma_{n}$ is carefully developed in \cite[Chap. 1]{girault2012finite}.
Regarding the tangential trace $\gamma_{t}$ in \eqref{eq: ext of t trace} and the trace space
\eqref{def:minushalf curl trace space}, we recommend the comprehensive and profound analysis of
\cite{buffa2002traces}, based on the earlier works \cite{BUC99,BUC99a,ALV96}.

These important results were generalized to arbitrary dimensions by Weck in
\cite{MR2085953} using the framework of differential forms, where pullback by the
boundary's inclusion map provides a unified description and generalization of the traces
\eqref{eq:euctr}. A similar characterization of the range of the boundary restriction operator for
Lipschitz subdomains of compact manifolds is given in \cite{MR2463962}, where a boundary
de Rham complex involving surface operators is also studied.

One may wonder whether the structures shining through in \eqref{def:minushalf trace space}
and \eqref{def:minushalf curl trace space} hint at a more general pattern governing the
structure of trace spaces. Thus, in this article, we are going to elaborate this structure
in the abstract framework of Hilbert complexes, of which the de Rham complex is the
best-known representative. Since there is no notion of ``boundary'' in that abstract
framework, we have to detach the concept of a trace space from the idea of a function
space on a boundary. This can be accomplished by adopting a quotient-space view of
traces.

Let us sketch this idea for the Euclidean de Rham complex. Since the kernels of the classical trace
operators \eqref{eq: ext of Dirichlet trace}-\eqref{eq: ext normal trace} are\footnote{We
  write $\mathcal{N}(\mathsf{T})$ and $\mathcal{R}(\mathsf{T})$ for the kernel/nullspace
  and range/image space, respectively, of a linear operator $\mathsf{T}$.}
\begin{subequations}
  \begin{align}
    \null( \gamma)&=\Honenot:=\overline{C_0^{\infty}(\Omega)}^{H^1(\Omega)} &&\text{\cite[Thm. 3.40]{mclean2000strongly}},\label{eq: ker Dirichlet trace}\\
    \null(\gamma_t)&=\Hcurlnot:=\overline{C_0^{\infty}(\Omega)^3}^{\mathbf{H}\bra{\mathbf{curl},\Omega}} && \text{\cite[Thm. 3.33]{monk2003finite}},\label{eq: ker t trace}\\
    \null(\gamma_n)&=\Hdivnot:=\overline{C_0^{\infty}(\Omega)^3}^{\mathbf{H}\bra{\text{div},\Omega}} && \text{\cite[Thm. 3.25]{monk2003finite}},\label{eq: ket n trace}
  \end{align}
\end{subequations}
we immediately conclude that these trace operators induce isomorphisms between the classical trace spaces and the quotient spaces:
\begin{subequations}
  \label{eq:drqs}
  \begin{align}
    \Hone/\Honenot&\cong H^{1/2}(\Gamma),\\
    \Hcurl/\Hcurlnot&\cong \mathbf{H}^{-1/2}(\text{curl}_\Gamma,\Gamma),\\
    \Hdiv/\Hdivnot &\cong H^{-1/2}(\Gamma).
  \end{align}
\end{subequations}
This paves the way for an alternative characterization of trace spaces independent of the
notion of ``function space on $\Gamma$''.  We remark that the quotient space approach to
the definition of trace spaces has also proved successful for the de Rham complex in order
to define traces on sets more complicated than boundaries of Lipschitz domains
\cite{MR3101780,MR3439201}.

Classical theory of trace spaces for $\Hone$, $\Hcurl$ and $\Hdiv$ also addresses duality
between trace spaces:
\begin{itemize}
\item The $L^2(\Gamma)$ inner product induces a duality between $H^{1/2}(\Gamma)$ and $H^{-1/2}(\Gamma)$; cf. \cite[Chap. 4.2]{Hsiao2008} and \cite[Chap. 3]{mclean2000strongly}.
\item The skew-symmetric pairing\footnote{We denote by $\sigma$ the surface measure on the boundary.}
  \begin{equation}\label{skew-symmetric pairing}
    \langle\u,\v\rangle_{\times}:=\int_{\Gamma}(\u\times\mathbf{n})\cdot\v \,\dif\sigma
  \end{equation}
  can be extended from $\mathbf{L}^2(\Gamma)\times\mathbf{L}^2(\Gamma)$ to
  $\mathbf{H}^{-1/2}(\text{curl}_\Gamma,\Gamma)\times\mathbf{H}^{-1/2}(\text{curl}_\Gamma,\Gamma)$,
  allowing the identification of $\mathbf{H}^{-1/2}(\text{curl}_\Gamma,\Gamma)$ with its
  own dual space; cf. \cite{buffa2002traces, MR2032868,monk2003finite}.
\end{itemize}
The possibility to put trace spaces for the 3D de Rham complex into duality seems to follow general rules:
\begin{equation}
  \label{eq:drdual}
  \def\arrowlength{17ex}
  \begin{tikzcd}[column sep=\arrowlength]
    H^1(\Omega)
    \arrow[r, rightarrow,  "\mathbf{grad}"] \arrow[d, rightarrow,  "\gamma"] 
    & 
    \Hcurl
    \ar[r, rightarrow,"\mathbf{curl}"] \arrow[d, rightarrow,  "\gamma_t"] 
    & 
    \Hdiv\arrow[d, rightarrow,  "\gamma_n"] \\
    \H^{1/2}(\Gamma)\arrow[rr, dash, bend right=19, "L^2\text{-duality}" description] 
    &
    \mathbf{H}^{-1/2}(\text{curl}_\Gamma,\Gamma)\ar[dash,loop right, distance=6em, "L^2\text{-self duality}" description]
    &
    H^{-1/2}(\Gamma)
  \end{tikzcd}
\end{equation}

\subsection{Goals, outline, and main results}
There are obvious parallels in the definitions of the different trace spaces and their
duality relations. One may wonder if this kind of resemblance between the trace spaces
arise only for the de Rham complex or whether it is already manifest in a more
basic/general setting, of which the de Rham complex is just a prominent specimen. That
setting is the framework of \emph{Hilbert complexes}\footnote{For the functional analytic
  foundations, we refer to parts of the FA-ToolBox from \cite[Sec. 2]{PS2021a}, which is a
  compilation of useful functional analysis results that grew from its use in previous
  works, cf.  \cite[Sec. 4.1]{P2019b}, \cite[Sec. 2]{P2020a}, \cite[Sec. 2.1]{PZ2016a},
  \cite[Sec. 2.1]{PZ2020a}, \cite[2.2]{PZ2020b}, \cite[Sec. 2]{PS2021a} and
  \cite[App. 3]{P2017a}. We find the introduction in \cite[Chap. 4]{MR3908678} to be an
  accessible resource for readers unacquainted with Hilbert complexes, because it reviews
  in detail the material more concisely presented in \cite[Sec. 3]{MR2594630},
  cf. \cite[Sec. 2]{MR2269741} and \cite{MR1174159}.}, first introduced in
\cite{MR1174159}. Therefore, the guiding question behind this work is:
\begin{quote}
  \emph{To what extent can results about traces for the de Rham domain complex be
    transferred to abstract Hilbert complexes?}
\end{quote}
Of course, abstract Hilbert complexes know neither domains nor boundaries. Therefore, as
already mentioned above, we cannot expect to arrive at a characterization of trace spaces
as function spaces on a boundary. Yet, a theory based on the quotient space view of trace
spaces is feasible. Its development will be pursued in \Cref{sec: Basic traces}. There, we
first propose trace operators induced by ``generalized integration by parts formulas'' and
mapping into dual spaces, and then generalize \eqref{eq:drqs} to a quotient-space
understanding of trace spaces.

Next, in \Cref{sec: Duality}, we shed light on duality relationships between trace spaces
and find that the observation made in \eqref{eq:drdual} is a generic pattern; see
\Cref{def of B}. This even holds in a setting simpler than Hilbert complexes. ``Minimal
Hilbert complexes'' will only enter the stage in \Cref{sec: Operators on trace spaces} in
order to define so-called ``surface operators'', which are abstract counterparts of the
classical surface differential operators such as $\mathbf{grad}_{\Gamma}$ and
$\textbf{curl}_{\Gamma}$. The full structure of Hilbert complexes is exploited starting
from \Cref{Characterization by regular subspaces}. Augmenting it by assumptions about the
existence of so-called stable regular decompositions (Assumptions \ref{Assumption continuous and
  dense embeddings AT} and \ref{dual regular decomposition}), we obtain
characterizations of traces spaces, in \Cref{thm:
	characterization of range Tt} and \Cref{thm: characterization of range Tn}, which reveal that the definitions \eqref{def:minushalf
  trace space} and \eqref{def:minushalf curl trace space} of classical trace spaces
reflect a more general pattern. This paves the way for the key insight expressed in \Cref{thm: trace
  Hilbert complexes definition} that trace spaces and surface operators are the building
blocks of what we call a trace Hilbert complex, a full-fledged Hilbert complex of
unbounded, densely defined, and closed operators. 

Parallel to its development, we will apply our new abstract theory to the de Rham complex in
three-dimensional Euclidean space. We hope that this will motivate some of the assumptions
made on the abstract spaces. The discussion will take the form of an ongoing
specialization of the definitions and results, set apart from the main line of reasoning.

\begin{example}{Traces and integration by parts}
  The key trace operators and trace spaces associated with the Euclidean de Rham complex
  in three space dimensions have already been introduced in \eqref{eq:ctr} and
  \eqref{eq:euctr}.  We just want to add the well-known fact that the trace operators
  \eqref{eq: ext of Dirichlet trace}-\eqref{eq: ext normal trace} have a close link with
  Green's formulas
  \begin{subequations}
    \begin{align}
      \langle\gamma u, \gamma_n\v\rangle_{\Gamma}&=\int_{\Omega} \mathbf{grad} u\cdot v + u\,\text{div}(\v) \, \dif \mathbf{x} &&\forall u\in H^1(\Omega),\forall\v\in\Hdiv,\label{IBP div}\\
      \langle\gamma_{t}\u, \gamma_{t}\v\rangle_{\times}&=\int_{\Omega}\mathbf{curl\,}\u\cdot\v -\u\cdot\mathbf{curl\,}\v\dif \mathbf{x} &&\forall\u,\v\in\Hcurl. \label{IBP curl}
    \end{align}
  \end{subequations}
  On the left, we denoted the duality pairing between $H^{1/2}(\Gamma)$ and $H^{-1/2}(\Gamma)$ by $\langle\cdot,\cdot\rangle_{\Gamma}$, but wrote $\langle\cdot,\cdot\rangle_{\times}$ for the skew-symmetric self-duality pairing on $\mathbf{H}^{-1/2}(\text{curl}_\Gamma,\Gamma)$, cf. \cite[Lem. 5.6]{buffa2002traces}.
\end{example}

Finally, we stress that we could have demonstrated the specialization of our results also
in the setting of general exterior calculus, but refrained from it in the interest of
readability.
\bigskip\bigskip

\newlength{\symlistskip}
\setlength{\symlistskip}{0ex}
\begin{center}
  \textbf{List of symbols}
  \bigskip\bigskip
  
  \begin{tabular}{p{3em}@{$\hat{=}$ }p{9cm}p{5cm}}
    $\A_k$                         & closed densely defined unbounded operators
    &\Cref{sec: Hilbert complexes}, \eqref{Hilbert complex Ak}
    \rule{0pt}{\symlistskip}\\
    $\A^*_k$                         & Hilbert space adjoint of $\A_k$         &\Cref{sec: Hilbert complexes}, \eqref{Hilbert dual complex Ak}
    \rule{0pt}{\symlistskip}\\
    $\rA_k$                         & closed densely defined unbounded operator $\rA_k\subset\A_k$          &\Cref{sec: Baisc setting}, \eqref{Hilbert complex rAk}\rule{0pt}{\symlistskip}\\
    $\AT_k$                         & Hilbert space adjoint of $\rA_k$         &\Cref{sec: Baisc setting}, \eqref{Hilbert dual complex ATk}\rule{0pt}{\symlistskip}\\
    $\mathsf{R}_{\dom(\AT_k)}$ &Riesz isomorphism $\dom(\AT_k)\rightarrow\dom(\AT_k)'$ & \Cref{sec: Riesz representatives}, \eqref{lem: riesz rep in domains}\rule{0pt}{\symlistskip}\\
    $\trace_k$                     & primal Hilbert trace $\dom(\A_k)\rightarrow\dom(\AT_k)'$                                       &\Cref{sec: Hilbert traces}, \eqref{eq: def of trace}\rule{0pt}{\symlistskip}\\
    $\dualtrace_k$                 & dual Hilbert trace $\dom(\AT_k)\rightarrow\dom(\A_k)'$                                         &\Cref{sec: Dual traces}, \eqref{eq: def of n trace} \rule{0pt}{\symlistskip}\\
    $\tracespace$                 & quotient space $\dom(\A_k)/\dom(\rA_k)$ &\Cref{sec: Trace spaces}, \eqref{eq: def of trace space}\rule{0pt}{\symlistskip}\\
    $\dualtracespace$             & quotient space $\dom(\AT_k)/\dom(\A^*_k)$ &\Cref{sec: Dual traces}, \eqref{eq: def of dual trace space}\rule{0pt}{\symlistskip}\\
    $\mathsf{I}^t_k$ &isometric isomorphism $\dom(\A_k)\rightarrow\range(\trace_k)$ & \Cref{sec: Trace spaces}, \eqref{eq: def of It iso}\rule{0pt}{\symlistskip}\\
    $\mathsf{I}^n_k$ &isometric isomorphism $\dom(\AT_k)\rightarrow\range(\dualtrace_k)$ & \Cref{sec: Dual traces}, \eqref{eq: def of In iso}\rule{0pt}{\symlistskip}\\
    $\llangle\cdot,\cdot\rrangle_k$  & duality pairing                                                 &\Cref{sec: Dual system}, \eqref{eq: def of duality pairing}\rule{0pt}{\symlistskip}\\
    $\mathsf{K}_k$                 & isometric isomorphism induced by $\langle\cdot,\cdot\rangle_k$ &\Cref{sec: Dual system}, \eqref{eq: definition of K pairing}\rule{0pt}{\symlistskip}\\
    $\mathsf{P}^t_k$                 & orthogonal projection $\dom(\A_k)\rightarrow\dom(\rA)^{\perp}$ &\Cref{sec: Hilbert traces}, \eqref{eq: commutative diag Pt and Gt}\rule{0pt}{\symlistskip}\\
    $\mathsf{P}^n_k$                 & orthogonal projection $\dom(\AT_k)\rightarrow\dom(\A^*_k)^{\perp}$ &\Cref{sec: Dual traces}, \eqref{commutative diagram Pn and Gn}\rule{0pt}{\symlistskip}\\
    $\bm{\pi}^t_k$                 & canonical quotient map $\dom(\A_k)\rightarrow\tracespace$ &\Cref{sec: Hilbert traces}, \eqref{eq: commutative diag Pt and Gt}\rule{0pt}{\symlistskip}\\
    $\bm{\pi}^n_k$                 & canonical quotient map $\dom(\AT_k)\rightarrow\dualtracespace$ &\Cref{sec: Hilbert traces}, \eqref{commutative diagram Pn and Gn}\rule{0pt}{\symlistskip}\\
    $\Wp_k$             &dense inclusion $\Wp_k\hookrightarrow\dom(\A_k)$ and/or $\Wp_k\hookrightarrow\dom(\AT_{k-1})$ &\Cref{sec: Bounded regular decompositions}, \eqref{dense embedding Yp and ZP}\rule{0pt}{\symlistskip}\\
    $\Wm_k$        & dual space $(\Wp_k)'$ &\Cref{sec: Bounded regular decompositions}, \eqref{eq: def of Wminus}\rule{0pt}{\symlistskip}\\
    $\mathring{\W}^{n,+}_k$   & intersection space $\dom(\A^*_{k-1})\cap\Wp_k=\null(\dualtrace_{k-1})\cap\Wp_k$ &\Cref{sec: Characterization of trace spaces}, \eqref{def of not t space} \rule{0pt}{\symlistskip}\\
    $\mathring{\W}^{t,+}_k$   & intersection space $\dom(\rA_k)\cap\Wp_k=\null(\trace_k)\cap\Wp_k$ &\Cref{sec: Characterization of trace spaces}, \eqref{def of not t space}\rule{0pt}{\symlistskip} \\
    $\mathbf{T}^{n,+}_{k}$ & quotient space $\Wp_k/\mathring{\W}^{n,+}$ &\Cref{Characterization of trace spaces based on quotient spaces}, \eqref{eq: def of Tn}\rule{0pt}{\symlistskip}\\
    $\mathbf{T}^{t,+}_{k}$ & quotient space $\Wp_k/\mathring{\W}^{t,+}$ &\Cref{Characterization of trace spaces based on quotient spaces}, \eqref{eq: def of Tt}\rule{0pt}{\symlistskip}\\
    $\mathbf{T}^{n,-}_{k}$ & dual space $(\mathbf{T}^{n,+}_{k})'$ &\Cref{Characterization of trace spaces based on quotient spaces}, \eqref{eq: def of Tn}\rule{0pt}{\symlistskip}\\
    $\mathbf{T}^{t,-}_{k}$ & dual space $(\mathbf{T}^{t,+}_{k})'$ &\Cref{Characterization of trace spaces based on quotient spaces}, \eqref{eq: def of Tt}\rule{0pt}{\symlistskip}\\
    $\mathsf{D}^t_k$                 & surface operator $(\AT_{k+1})':\dom(\AT_{k})'\rightarrow \dom(\AT_{k+1})'$ &\Cref{sec: Surface operators in domains}, \eqref{def: surface operator Dt}\rule{0pt}{\symlistskip}\\
    $\mathsf{D}^n_k$                 & surface operator $\A_{k-1}':\dom(\A_{k})'\rightarrow \dom(\A_{k-1})'$ &\Cref{sec: Surface operators in domains}, \eqref{def: surface operator Dn}\rule{0pt}{\symlistskip}\\
    $\mathsf{S}^t_k$                 & surface operator $\A_k:\mathcal{T}(\A_k)\rightarrow \mathcal{T}(\A_{k+1})$ &\Cref{sec:Surface operators in quotient spaces}, \eqref{eq: def of S trace operators}\rule{0pt}{\symlistskip}\\
    $\mathsf{S}^t_k$                 & surface operator $\AT_k:\mathcal{T}(\AT_k)\rightarrow \mathcal{T}(\AT_{k-1})$ &\Cref{sec:Surface operators in quotient spaces}, \eqref{eq: def of S trace operators}\rule{0pt}{\symlistskip}\\
    $\hat{\mathsf{S}}^t_k$                 & surface operator $\A_k:\mathbf{T}^{t,+}_{k+1}\rightarrow \mathcal{T}(\A_{k+1})$ &\Cref{Characterization of trace spaces based on quotient spaces} \eqref{eq: def of the S operators in lemma}\rule{0pt}{\symlistskip}\\
    $\hat{\mathsf{S}}^{n}_k$                 & surface operator $\AT_k:\mathbf{T}^{n,+}_{k+1}\rightarrow \mathcal{T}(\AT_{k-1})$ &\Cref{Characterization of trace spaces based on quotient spaces}, \eqref{eq: def of the S operators in lemma}\rule{0pt}{\symlistskip}\\
    $\hat{\mathsf{D}}^t_{k}$ & surface operator $(	\hat{\mathsf{S}}^n_{k+1})':\mathcal{T}(\AT_k)'\rightarrow \mathbf{T}^{n,-}_{k+2}$ &\Cref{Characterization of trace spaces based on quotient spaces}, \eqref{extension of D hat}\rule{0pt}{\symlistskip}\\
    $\hat{\mathsf{D}}^n_{k}$ & surface operator $(
    \hat{\mathsf{S}}^t_{k})':\mathcal{T}(\A_{k+1})'\rightarrow \mathbf{T}^{t,-}_k$
    &\Cref{Characterization of trace spaces based on quotient
      spaces}, \eqref{extension of D hat}\rule{0pt}{\symlistskip}
  \end{tabular}
\end{center}

\newpage
\section{Hilbert Complexes}
\label{sec:hc}

\subsection{Operators on Hilbert spaces}\label{sec: Operators on Hilbert spaces}
In this article, both \emph{bounded} and \emph{unbounded} linear operators take center
stage\footnote{Standard references concerning bounded and unbounded linear operators are
  \cite[Chap. 3]{MR1335452} and \cite[Chap. 7]{MR0350358}. We also particularly recommend
  \cite[Chap. 3]{MR3908678}, \cite[Chap. 1-6]{MR2759829} and
  \cite[Chap. 6-8]{MR751959}.}. We distinguish them using the following notation. Let $\X$
and $\Y$ be two Hilbert spaces equipped with the inner products $(\cdot,\cdot)_{\X}$ and
$(\cdot,\cdot)_{\Y}$, respectively. We will consistently write
$ \A:\dom(\A)\subset\X\to\Y $ to indicate that $\A$ is regarded as an \emph{unbounded}
linear operator from $\X$ to $\Y$ with domain $\dom(\A)$, whereas we mean by
$ \A:\X\to\Y $ that $\A$ is viewed as a \emph{bounded} operator from $\X$ to $\Y$ defined
on the whole space $\X$.

Recall that the difference between $\A:\dom(\A)\subset\X\to\Y$ and $\A:\dom(\A)\to\Y$
comes from whether the topology of the subspace $\dom(\A)\subset\X$ is given by the norm
of $\X$ or the graph norm induced by the inner product
$ (\x_1,\x_2)_{\dom(\A)}:= (\x_1,\x_2)_{\X}+(\A\x_1,\A\x_2)_{\Y} $
$\forall\x_1,\x_2\in \dom(\A)$.

An unbounded operator $\A:\dom(\A)\subset\X\to\Y$ is said to be \emph{closed} if and only if its domain $\dom(\A)$ is a Hilbert space when endowed with the graph norm, cf. \cite[Prop. 3.1]{MR3908678}. It is \emph{densely defined} if $\dom(A)$ is a dense subset of $\X$. The kernel and range of $\A$, whether it is bounded or not, will be denoted $\null(\A)$ and $\range(\A)$, respectively. 

Topological dual spaces will be tagged with prime, e.g. $\X'$. We use angle brackets for
duality pairings, e.g. $\langle \bm{\phi},\x\rangle_{\X'}$, $\bm{\phi}\in\X'$,
$\x\in\X$. Accordingly, the operator dual to a \emph{bounded} linear operator
$\A:\X\rightarrow\Y$ is a bounded operator $\A':\Y'\rightarrow \X'$.

The Hilbert space adjoint of $\A:\dom(\A)\subset\X\to\Y$ is written $\A^*: \dom(\A^*)\subset\Y\rightarrow \X$.
Recall that it is the unbounded linear operator satisfying
\begin{align}\label{def: hilbert space adjoint}
  (\A^*\y,\x)_{\X} = (\y,\A\x)_{\Y} \quad \forall\y\in \mathcal{D}(\A^*),\forall\x\in \dom(\A),
\end{align}
whose domain $\dom(\A^*)$ consists of all $\y\in\Y$ for which the linear functional
$\dom(\A)\rightarrow\mathbb{R}$ defined by $\x\mapsto (\y,\A\x)_{\Y}$ is continuous in the
$\X$ norm, i.e. for every $\y\in \dom(\A^*)$, $\exists C_{\y} >0$ such that
$\vert(\y,\A\x)_{\Y}\vert\leq C_{\y}\norm{\x}_{\X}$, $\forall\x\in \dom(\A)$.  If $\A$ is
closed and densely defined, then $\A^*$ is also closed and densely defined
\cite[Prop. 3.3]{MR3908678}---in which case $\A^{**}=\A$.

We write $\rA\subset\A$ and say that an unbounded linear operator $\A:\dom(\A)\subset\X\rightarrow\Y$ is an extension of another unbounded linear operator $\rA:\dom(\rA)\subset\X\rightarrow\Y$ when $\dom(\rA)\subset\dom(\A)$ and $\A\x_{\circ}=\rA\x_{\circ}$ for all $\x_{\circ}\in\dom(\rA)$.

\begin{example}{Differential operators}\label{spe:Differential operators}
  We refer to \cite[Chap. 3]{MR3908678} for the following mappings properties. The linear differential operators
  \begin{subequations}
    \begin{align}
      \mathbf{grad}:&\,\Hone\subset\Ltwo\rightarrow\bLtwo \label{nabla},\\
      \mathbf{curl}:&\,\Hcurl\subset\bLtwo\rightarrow\bLtwo,\\
      \text{div}:&\,\Hdiv\subset\bLtwo\rightarrow\Ltwo\label{div},
    \end{align}
  \end{subequations}
  are densely defined and closed unbounded linear operators. They are extensions of
  \begin{subequations}
    \begin{align}
      \mr{\mathbf{grad}}:&\,\Honenot\subset\Ltwo\rightarrow\bLtwo \label{rnabla},\\
      \mr{\mathbf{curl}}:&\,\Hcurlnot\subset\bLtwo\rightarrow\bLtwo,\\
      \mr{\text{div}}:&\,\Hdivnot\subset\bLtwo\rightarrow\Ltwo\label{rdiv}.
    \end{align}
  \end{subequations}

  The $L^2$ Hilbert space adjoints of \eqref{nabla}-\eqref{div} are
  \begin{subequations}
    \begin{align}
      \mathbf{grad}^*=-\mr{\text{div}}:&\,\Hdivnot\subset\bLtwo\rightarrow\Ltwo,\label{def grad star}\\
      \mathbf{curl}^*=\mr{\mathbf{curl}}:&\,\Hcurlnot\subset\bLtwo\rightarrow\bLtwo,\\
      \text{div}^*=-\mr{\mathbf{grad}}:&\,\Honenot\subset\Ltwo\rightarrow\bLtwo,\label{def div star}\
    \end{align}
  \end{subequations}
  respectively. Then, the adjoint operators of \eqref{rnabla}-\eqref{rdiv} are obtained using the fact that $\A^{**}=\A$ for all densely defined and closed unbounded linear operators between Hilbert spaces.

  By abuse of notation, we generally write $\mathbf{grad}=\mr{\mathbf{grad}}$, $\mathbf{curl}=\mr{\mathbf{curl}}$ and $\text{div}=\mr{\text{div}}$.
\end{example}

\subsection{Definition}
\label{sec: Hilbert complexes}
A \emph{Hilbert complex} is a sequence of Hilbert spaces $\W_k$, $k\in \mathbb{Z}$,
together with a sequence of closed and densely defined unbounded linear operators
$\A_k:\dom(\A_k)\subset\W_k\rightarrow \W_{k+1}$ such that
$\range(\A_k)\subset \null(\A_{k+1})$, i.e. $\A_{k+1}\circ\A_{k}\equiv0$ for all
$k\in\mathbb{Z}$. It can be written as
\begin{subequations}
  \begin{equation}
    \label{Hilbert complex Ak}
    \def\arrowlength{5.8ex}
    \def\arrowdistance{.8}
    \begin{tikzcd}[column sep=\arrowlength]
      \cdots 
      \arrow[r, rightarrow, shift left=\arrowdistance, "\A_{k-2}"] 
      & 
      \dom(\A_{k-1})\subset\W_{k-1} 
      \ar[r, rightarrow, shift left=\arrowdistance, "\A_{k-1}"] 
      & 
      \dom(\A_k)\subset\W_k
      \arrow[r, rightarrow, shift left=\arrowdistance, "\A_{k}"] 
      & 
      \dom(\A_{k+1})\subset\W_{k+1}
      \arrow[r, rightarrow, shift left=\arrowdistance, "\A_{k+1}"] 
      &
      \cdots, 
    \end{tikzcd}
  \end{equation}
  cf. \cite[Def. 4.1]{MR3908678}. The associated sequence of adjoint operators spawns the
  so-called dual Hilbert complex
  \begin{equation}\label{Hilbert dual complex Ak}
    \def\arrowlength{5.8ex}
    \def\arrowdistance{.8}
    \begin{tikzcd}[column sep=\arrowlength]
      \cdots 
      \arrow[r, leftarrow, shift right=\arrowdistance, "\A^*_{k-2}"']
      & 
      \dom(A_{k-2}^*) \subset\W_{k-1}
      \ar[r, leftarrow, shift right=\arrowdistance, "\A_{k-1}^{*}"']
      & 
      \dom(A_{k-1}^*) \subset\W_k
      \arrow[r, leftarrow, shift right=\arrowdistance, "\A_{k}^{*}"']
      & 
      \dom(A_{k}^*) \subset \W_{k+1}
      \arrow[r, leftarrow, shift right=\arrowdistance, "\A^*_{k+1}"']
      &
      \cdots, 
    \end{tikzcd}
  \end{equation}
\end{subequations}
which by \eqref{def: hilbert space adjoint} is itself a Hilbert complex, because
$\A^*_{k-1}\circ\A^*_{k}\equiv0$ for all $k\in\mathbb{Z}$. ``Finite'' Hilbert complexes
can be embedded into \eqref{Hilbert complex Ak} by setting $\W_k=\{0\}$ for all
$k\notin\{0,1,...,N\}$.

Notice that since $\range(\A_k)\subset \dom(\A_{k+1})$ and
$\range(\A^*_{k+1})\subset \dom(\A^*_{k})$, the sequences of \emph{bounded} operators
$\A_k:\dom(\A_k)\rightarrow \W_{k+1}$ and $\A^*_k:\dom(\A^*_{k})\rightarrow \W_{k}$ also
induce Hilbert complexes themselves:
\begin{subequations}
  \begin{equation}\label{Hilbert domain complex Ak}
    \def\arrowlength{5.8ex}
    \def\arrowdistance{.8}
    \begin{tikzcd}[column sep=\arrowlength]
      \cdots 
      \arrow[r, rightarrow, shift left=\arrowdistance, "\A_{k-2}"] 
      & 
      \dom(\A_{k-1}) 
      \ar[r, rightarrow, shift left=\arrowdistance, "\A_{k-1}"] 
      & 
      \dom(\A_k)
      \arrow[r, rightarrow, shift left=\arrowdistance, "\A_{k}"] 
      & 
      \dom(\A_{k+1})
      \arrow[r, rightarrow, shift left=\arrowdistance, "\A_{k+1}"] 
      &
      \cdots, 
    \end{tikzcd}
  \end{equation}
  \begin{equation}\label{Hilbert dual domain complex Ak}
    \def\arrowlength{5.8ex}
    \def\arrowdistance{.8}
    \begin{tikzcd}[column sep=\arrowlength]
      \cdots 
      \arrow[r, leftarrow, shift right=\arrowdistance, "\A^*_{k-2}"']
      & 
      \dom(\A^*_{k-2}) 
      \ar[r, leftarrow, shift right=\arrowdistance, "\A_{k-1}^{*}"']
      & 
      \dom(\A^*_{k-1})
      \arrow[r, leftarrow, shift right=\arrowdistance, "\A_{k}^{*}"']
      & 
      \dom(\A^*_{k})
      \arrow[r, leftarrow, shift right=\arrowdistance, "\A^*_{k+1}"']
      &
      \cdots. 
    \end{tikzcd}
  \end{equation}
\end{subequations}
These are examples of \emph{bounded} Hilbert complexes in which every operator is continuous. We refer to \eqref{Hilbert domain complex Ak} and \eqref{Hilbert domain complex Ak} as the \emph{domain complexes} of \eqref{Hilbert complex Ak} and \eqref{Hilbert dual complex Ak}.

If the range $\range(\A_k)$ is a closed subset of $\W_{k+1}$ for all $k$, we say that the
Hilbert complex \eqref{Hilbert complex Ak} is \emph{closed}. If this is the case, then
$\range(\A_k^*)$ is also closed in $\W_{k}$ by the closed range theorem
\cite[Thm. 3.7]{MR3908678}, making the dual complex \eqref{Hilbert dual complex Ak} a
closed Hilbert complex too. Furthermore, \eqref{Hilbert complex Ak} is said to be
\emph{Fredholm} if the codimension of $\range(\A_k)$ is finite in $\null(\A_{k+1})$---in
which case it is also closed by \cite[Thm. 3.8]{MR3908678}. Equivalently, a Hilbert
complex is Fredholm if the quotient spaces $\null(\A_{k+1})/\range(\A_{k})$ and
$\null(\A^*_{k})/\range(\A^*_{k+1})$ are finite dimensional, in other words, if the
cohomology spaces of \eqref{Hilbert complex Ak} and \eqref{Hilbert dual complex Ak} have
finite dimension. It is a sufficient condition for a Hilbert complex to be Fredholm to
satisfy the \emph{compactness property}, that is, the embedding
$\dom(\A_k)\cap \dom(\A_{k-1}^*) \hookrightarrow \W_{k}$ is compact for all
$k\in\mathbb{Z}$.

\begin{example}{The $L^2$ de Rham complex in $\mathbb{R}^3$}
  \label{spe: L2 de Rham complex}
  The $L^2$ de Rham complex \eqref{L2 de Rham complex intro} is a standard example of a
  Hilbert complex, where $\A_{k}\equiv 0$ and $\W_{k}=\{0\}$ is set for
  $k\in\mathbb{Z}\backslash\{0,1,2,3\}$. Its dual complex is represented by the sequence 
  \begin{equation}
    \def\arrowlength{5.4ex}
    \def\arrowdistance{.4}
    \begin{tikzcd}[column sep=\arrowlength]
      \{0\}
      \arrow[r, twoheadleftarrow, shift right=\arrowdistance, "\,\,\,0"']
      & 
      L^2(\Omega)
      \arrow[r, leftarrow, shift right=\arrowdistance, "-\text{div}"']
      & 
      \Hdivnot\subset\mathbf{L}^2(\Omega) 
      \ar[r, leftarrow, shift right=\arrowdistance, "\mathbf{curl}"']
      & 
      \Hcurlnot\subset\bLtwo
      \arrow[r, leftarrow, shift right=\arrowdistance, "-\mathbf{grad}"']
      & 
      \Honenot\subset L^2(\Omega)
      \arrow[r, hookleftarrow, shift right=\arrowdistance, "\imath"']
      & 
      \{0\},
    \end{tikzcd}
  \end{equation}
  cf. \cite[Sec. 3.4]{MR3908678} and \cite[Sec. 4.3]{MR3908678}, and its embedding into
  our abstract framework is summarized in the following table: 
  \begin{center}
    \begin{tabular}{ c|cccccc } 
      $k$ &$\W_k$& $\A_k$ & $\dom(\A_k)$ &  $\A^*_k$& $\dom(\A^*_k)$ & $\dom(\A_k)\cap \dom(\A^*_{k-1})$\\
      \midrule
      $0$ &$L^2(\Omega)$& $\mathbf{grad}$ & $H^1(\Omega)$ & $-\,\text{div}$ & $\Hdivnot$ &$H^1(\Omega)$\rule{0pt}{3ex}\\ 
      $1$ &$\mathbf{L}^2(\Omega)$ & $\mathbf{curl}$ & $\Hcurl$ & $\mathbf{curl}$ &$\Hcurlnot$ &$\Hcurl\cap\Hdivnot$\rule{0pt}{3ex}\\ 
      $2$ &$\mathbf{L}^2(\Omega)$ & $\text{div}$ & $\Hdiv$ & $-\mathbf{grad}$ &$\Honenot$ & $\Hdiv\cap\Hcurlnot$\rule{0pt}{3ex}\\
      $3$ & $L^2(\Omega)$& $ 0$ & $L^2(\Omega)$ & $\text{Id}$& $\{0\}$ & $\Honenot$\rule{0pt}{3ex}\\  
    \end{tabular}
  \end{center}
  
  The de Rham complex satisfies the compactness property, and thus it is Fredholm. Indeed,
  recall that Rellich's compact embedding theorem states that the inclusion of
  $H^1(\Omega)$ and $\Honenot$ in $L^2(\Omega)$ is compact. We refer to \cite{MR753428}
  for a proof that $\Hcurl\cap\Hdivnot$ and $\Hdiv\cap\Hcurlnot$ are compactly embedded in
  $\mathbf{L}^2(\Omega)$.  \medskip
\end{example}

\subsection{Basic setting}
\label{sec: Baisc setting}

Now, let a Hilbert complex as in \eqref{Hilbert complex Ak} be given and suppose that the
unbounded linear operators of a second Hilbert complex
\begin{subequations}
  \begin{equation}\label{Hilbert complex rAk}
    \def\arrowlength{6ex}
    \def\arrowdistance{.4}
    \begin{tikzcd}[column sep=\arrowlength]
      \cdots 
      \arrow[r, rightarrow, shift left=\arrowdistance, "\rA_{k-2}"] 
      & 
      \dom(\rA_{k-1})\subset\W_{k-1} 
      \ar[r, rightarrow, shift left=\arrowdistance, "\rA_{k-1}"] 
      & 
      \dom(\rA_k)\subset\W_k
      \arrow[r, rightarrow, shift left=\arrowdistance, "\rA_{k}"] 
      & 
      \dom(\rA_{k+1})\subset\W_{k+1}
      \arrow[r, rightarrow, shift left=\arrowdistance, "\rA_{k+1}"] 
      &
      \cdots 
    \end{tikzcd}
  \end{equation}
  are such that $\rA_k\subset\A_k$, i.e. $\dom(\rA_k)\subset \dom(\A_k)$ and
  $\A_k\vert_{\dom(\rA_k)}=\rA_k$. In other words, for all $k\in \mathbb{Z}$, $\A_k$ is an
  extension of $\rA_k$. It is easy to verify that the adjoint operators
  $\AT_k:=\rA^*_k:\dom(\rA^*_k)\subset\W_{k+1}\rightarrow\W_k$ involved in the dual
  complex
  \begin{equation}\label{Hilbert dual complex ATk}
    \def\arrowlength{6ex}
    \def\arrowdistance{.4}
    \begin{tikzcd}[column sep=\arrowlength]
      \cdots 
      \arrow[r, leftarrow, shift right=\arrowdistance, "\AT_{k-2}"']
      & 
      \dom(\AT_{k-2}) \subset\W_{k-1}
      \ar[r, leftarrow, shift right=\arrowdistance, "\AT_{k-1}"']
      & 
      \dom(\AT_{k-1}) \subset\W_k
      \arrow[r, leftarrow, shift right=\arrowdistance, "\AT_{k}"']
      & 
      \dom(\AT_{k}) \subset \W_{k+1}
      \arrow[r, leftarrow, shift right=\arrowdistance, "\AT_{k+1}"']
      &
      \cdots 
    \end{tikzcd}
  \end{equation}
\end{subequations}
are such that $\A^*_k\subset\AT_k$.  In particular, the bounded domain complexes
\begin{subequations}
  \begin{equation}\label{Hilbert domain complex rAk}
    \def\arrowlength{6ex}
    \def\arrowdistance{.8}
    \begin{tikzcd}[column sep=\arrowlength]
      \cdots 
      \arrow[r, rightarrow, shift left=\arrowdistance, "\rA_{k-2}"] 
      & 
      \dom(\rA_{k-1}) 
      \ar[r, rightarrow, shift left=\arrowdistance, "\rA_{k-1}"] 
      & 
      \dom(\rA_k)
      \arrow[r, rightarrow, shift left=\arrowdistance, "\rA_{k}"] 
      & 
      \dom(\rA_{k+1})
      \arrow[r, rightarrow, shift left=\arrowdistance, "\rA_{k+1}"] 
      &
      \cdots, 
    \end{tikzcd}
  \end{equation}
  \begin{equation}\label{Hilbert dual domain complex rAk}
    \def\arrowlength{6ex}
    \def\arrowdistance{.8}
    \begin{tikzcd}[column sep=\arrowlength]
      \cdots 
      \arrow[r, leftarrow, shift right=\arrowdistance, "\A^*_{k-2}"']
      & 
      \dom(\A^*_{k-2}) 
      \ar[r, leftarrow, shift right=\arrowdistance, "\A^*_{k-1}"']
      & 
      \dom(\A^*_{k-1})
      \arrow[r, leftarrow, shift right=\arrowdistance, "\A^*_{k}"']
      & 
      \dom(\A^*_{k})
      \arrow[r, leftarrow, shift right=\arrowdistance, "\A^*_{k+1}"']
      &
      \cdots, 
    \end{tikzcd}
  \end{equation}
\end{subequations}
are examples of Hilbert \emph{subcomplexes} of the domain Hilbert complexes \eqref{Hilbert
  domain complex Ak} and \eqref{Hilbert dual domain complex Ak}.

For reference, this basic setting is summarized in the following assumption.
\begin{statement}
  \label{A}
  \label{Assumption trace}
  For all $k\in\mathbb{Z}$ let $\W_k$ be real Hilbert spaces, and suppose that
  $\A_k:\dom(\A_k)\subset\W_k\rightarrow\W_{k+1}$ and
  $\rA_k:\dom(\rA_k)\subset\W_k\rightarrow\W_{k+1} $ are densely defined and closed
  unbounded linear operators such that $\range(\A_k)\subset \null(\A_{k+1})$,
  $\range(\rA_k)\subset \null(\rA_{k+1})$, and $\A_k$ is an extension of $\rA_k$,
  i.e. $\dom(\rA_k)\subset\dom(\A_k)$ and $\A_k\x_{\circ}=\rA_k\x_{\circ}$ for all
  $\x_{\circ}\in\dom(\rA_k)$.
\end{statement}

\begin{example}{Boundary conditions}\label{spe: Boundary conditions}
  The Hilbert complex 
  \begin{subequations}
    \begin{equation}
      \def\arrowlength{5.4ex}
      \def\arrowdistance{.4}
      \begin{tikzcd}[column sep=\arrowlength]
        \{0\}
        \arrow[r, hookrightarrow, shift left=\arrowdistance, "\imath"] 
        & 
        H_0^1(\Omega)\subset L^2(\Omega) 
        \arrow[r, rightarrow, shift left=\arrowdistance, "\mathbf{grad}"] 
        & 
        \Hcurlnot\subset\mathbf{L}^2(\Omega) 
        \ar[r, rightarrow, shift left=\arrowdistance, "\mathbf{curl}"] 
        & 
        \Hdivnot\subset\mathbf{L}^2(\Omega) 
        \arrow[r, rightarrow, shift left=\arrowdistance, "\text{div}"] 
        & 
        L^2(\Omega)
        \arrow[r, twoheadrightarrow, shift left=\arrowdistance, "0\,\,\,"] 
        & 
        \{0\}
      \end{tikzcd}
    \end{equation}
    fulfills the hypothesis on \eqref{Hilbert complex rAk} for the $L^2$ de Rham complex \eqref{L2 de Rham complex intro}. Owing to \eqref{def grad star}-\eqref{def div star}, its dual complex is written
    \begin{equation}
      \def\arrowlength{5.4ex}
      \def\arrowdistance{.4}
      \begin{tikzcd}[column sep=\arrowlength]
        \{0\}
        \arrow[r, twoheadleftarrow, shift right=\arrowdistance, "\,\,\,0"']
        & 
        L^2(\Omega)
        \arrow[r, leftarrow, shift right=\arrowdistance, "-\text{div}"']
        & 
        \Hdiv\subset\mathbf{L}^2(\Omega) 
        \ar[r, leftarrow, shift right=\arrowdistance, "\mathbf{curl}"']
        & 
        \Hcurl\subset\bLtwo
        \arrow[r, leftarrow, shift right=\arrowdistance, "-\mathbf{grad}"']
        & 
        H^1\subset L^2(\Omega)
        \arrow[r, hookleftarrow, shift right=\arrowdistance, "\imath"']
        & 
        \{0\}.
      \end{tikzcd}
    \end{equation}
  \end{subequations}
  Summing up, the various operators and spaces have the following incarnations for
  the de Rham complex in three-dimensional Euclidean space:
  \begin{center}
    \begin{tabular}{ c|cccccc } 
      $k$ &$\W_k$& $\rA_k$ & $\dom(\rA_k)$ &  $\AT_k$& $\dom(\AT_k)$ & $\dom(\rA_k)\cap \dom(\AT_{k-1})$\\
      \midrule
      $0$ &$L^2(\Omega)$& $\mathbf{grad}$ & $\Honenot$ & $-\,\text{div}$ & $\Hdiv$ &$\Honenot$\rule{0pt}{3ex}\\ 
      $1$ &$\mathbf{L}^2(\Omega)$ & $\mathbf{curl}$ & $\Hcurlnot$ & $\mathbf{curl}$ &$\Hcurl$ &$\Hcurlnot\cap\Hdiv$\rule{0pt}{3ex}\\ 
      $2$ &$\mathbf{L}^2(\Omega)$ & $\text{div}$ & $\Hdivnot$ & $-\mathbf{grad}$ &$H^1(\Omega)$ & $\Hdivnot\cap\Hcurl$\rule{0pt}{3ex}\\
      $3$ & $L^2(\Omega)$& $ 0$ & $L^2(\Omega)$ & $\text{Id}$& $\{0\}$ & $H^1(\Omega)$\rule{0pt}{3ex}\\  
    \end{tabular}
  \end{center}
\end{example}

\section{Trace Operators}
\label{sec: Basic traces}

The following sections lay the foundations of a general quotient-based abstract theory for
traces in Hilbert spaces. To that end, we do not require the full structure of Hilbert
complexes, but it suffices to focus on the following snippet of the Hilbert complexes
\eqref{Hilbert complex Ak} and \eqref{Hilbert complex rAk}:
\[
  \tikz[
  overlay]{
    \filldraw[fill=black!10,draw=black!10] (5.65,1.2) rectangle (11.75,-1);
  }
  \def\arrowlength{6ex}
  \def\arrowdistance{.8}
  \begin{tikzcd}[column sep=\arrowlength, row sep=0.05cm]
    \cdots 
    \arrow[r, rightarrow, shift left=\arrowdistance, "\A_{k-2}"] 
    & 
    \dom(\A_{k-1})\subset\W_{k-1} 
    \ar[r, rightarrow, shift left=\arrowdistance, "\A_{k-1}"] 
    & 
    \dom(\A_k)\subset\W_k
    \arrow[r, rightarrow, shift left=\arrowdistance, "\A_{k}"] 
    & 
    \dom(\A_{k+1})\subset\W_{k+1}
    \arrow[r, rightarrow, shift left=\arrowdistance, "\A_{k+1}"] 
    &
    \cdots, \\
    \,
    & 
    \cup\qquad\quad
    & 
    \cup\qquad\quad
    & 
    \cup\qquad\quad
    &
    \, \\
    \cdots 
    \arrow[r, rightarrow, shift left=\arrowdistance, "\rA_{k-2}"] 
    & 
    \dom(\rA_{k-1})\subset\W_{k-1} 
    \ar[r, rightarrow, shift left=\arrowdistance, "\rA_{k-1}"] 
    & 
    \dom(\rA_k)\subset\W_k
    \arrow[r, rightarrow, shift left=\arrowdistance, "\rA_{k}"] 
    & 
    \dom(\rA_{k+1})\subset\W_{k+1}
    \arrow[r, rightarrow, shift left=\arrowdistance, "\rA_{k+1}"] 
    &
    \cdots .
  \end{tikzcd}
\]

In the sequel, we fix $k\in\mathbb{Z}$ and take for granted Assumption \ref{Assumption trace}.

\subsection{Hilbert traces}
\label{sec: Hilbert traces}

Using the shorthand $\AT_k:=\rA_k^*:\dom(\AT_k)\subset\W_{k+1}\rightarrow\W_k$, it follows
from the estimate
\begin{align}
  \begin{split}\label{eq: trace continuity estimate}
    \vert (\A_k\x,\y)_{\W_{k+1}} - (\x,\AT_k\y)_{\W_k} \vert
    & \leq \|\A_k\x\|_{\W_{k+1}}\|\y\|_{\W_{k+1}} + \|\x\|_{\W_k}\|\AT_k\y\|_{\W_k}\\
    & \leq \|\x\|_{\dom(\A_k)}\|\y\|_{\dom(\AT_k)}
  \end{split}
\end{align}
that the following definition of a particular notion of a trace makes sense.
\medskip

\defbox{%
\begin{defi}\label{def: trace enviro}
  \label{def:trace1}
  In the setting of Assumption \ref{Assumption trace}, the bounded linear operator 
  \begin{equation}
    \cred{\traceA:\dom(\A_k)\rightarrow \dom(\AT_k)'}
  \end{equation} 
  defined  for all $\x\in \dom(\A_k)$ and $\y\in \dom(\AT_k)$ by
  \begin{align}\label{eq: def of trace}
    \langle\traceA\x,\y\rangle_{\dom(\AT_k)'}:= (\A_k\x,\y)_{\W_{k+1}} - (\x,\AT_k\y)_{\W_k}
  \end{align}
  is called the \cred{\emph{(primal)  Hilbert trace}} associated with the pair of operators $\A_k$ and $\rA_k$.
\end{defi}%
}

It also follows from \eqref{eq: trace continuity estimate} that
\begin{equation}
  \label{eq: norm of weak trace is one}
  \|\traceA\|=1,
\end{equation}
where $\|\cdot\|$ is the operator norm. 

We point out that defining a trace operator as a mapping into a dual space has precedents
in the theory of Friedrichs operators, has been pursued in \cite[Sect.~2.2]{ERG07} and
\cite[Sect.~56.3.2]{ERG21c}, and is also discussed in \cite{ANB11,ANB10a,ANB10}. In these
works, the authors have dubbed ``boundary operators'' what we have decided to call ``Hilbert traces''.

Let us motivate the above notion of trace with classical examples.
\begin{example}{Hilbert traces}
  \label{spe: Classical Hilbert traces}
  Applying \Cref{def:trace1} in the 3D de Rham setting \ref{spe:Differential operators},
  we obtain the Hilbert traces
  \begin{subequations}
    \begin{align}
      &\trace_0=\trace_{\scriptscriptstyle\mathbf{grad}}:\Hone\rightarrow \Hdiv',\label{trace: Dirichlet general mapping}\\
      &\trace_1=\trace_{\scriptscriptstyle\mathbf{curl}}\,:\Hcurl\rightarrow \Hcurl',\label{trace: t general mapping}\\
      &\trace_2=\trace_{\scriptscriptstyle\text{div}}\,\,\,:\Hdiv\rightarrow \Hone'\label{trace: normal general mapping},
    \end{align}
  \end{subequations}
  defined by
  \begin{subequations}
    \begin{align}
      \langle\trace_{\scriptscriptstyle\mathbf{grad}} \,v,\u\rangle_{\Hdiv'} &:= (\mathbf{grad} v,\u)_{\bLtwo} + (v,\text{div}\,\u)_{L^2(\Omega)},\label{trace: dirichlet variation}\\
      \langle\trace_{\scriptscriptstyle\mathbf{curl}} \z,\w\rangle_{\Hcurl'} &:= (\mathbf{curl}\,\z,\w)_{\bLtwo} - (\z,\mathbf{curl}\,\w)_{\bLtwo}, \label{trace: t variation}\\
      \langle\trace_{\scriptscriptstyle\text{div}} \u,v\rangle_{\Hone'} &:= (\text{div}\,\u,v)_{\bLtwo} + (\u,\mathbf{grad}\,v)_{\bLtwo},\label{trace: normal variation}
    \end{align}
  \end{subequations}
  for all $v\in H^1$, $\u\in \Hdiv$ and $\z,\w\in \Hcurl$.

  We recognize on the right hand sides of \eqref{trace: dirichlet variation}-\eqref{trace: normal variation} the continuous bilinear forms occurring in Green's formulas \eqref{IBP div} and \eqref{IBP curl}. Introducing the operators 
  \begin{align}
    \gamma_n':H^{1/2}(\Gamma)\rightarrow \Hdiv',
    &&\gamma_{t}'&:\mathbf{H}^{-1/2}(\text{curl}_\Gamma,\Gamma)\rightarrow\Hcurl',
    &&\gamma':H^{-1/2}\rightarrow\Hone',
  \end{align}
  dual to the classical traces, where we have identified $H^{-1/2}(\Gamma)$ with $(H^{1/2}(\Gamma))'$ through the $L^2(\Gamma)$-pairing on the boundary and $\mathbf{H}^{-1/2}(\text{curl}_\Gamma,\Gamma)$ with its own dual through the skew-symmetric pairing defined in \eqref{skew-symmetric pairing}, we obtain
  \begin{align}
    \trace_{\scriptscriptstyle\mathbf{grad}} = \gamma_n'\circ\gamma,
    &&\trace_{\scriptscriptstyle\mathbf{curl}} = \gamma_{t}'\circ\gamma_t,
    &&\trace_{\scriptscriptstyle\text{div}} = \gamma'\circ\gamma_n.
  \end{align}
  Observe that
  \begin{equation}\label{T0 and T2 are dual}
    (\trace_{\scriptscriptstyle\mathbf{grad}})'=\trace_{\scriptscriptstyle\text{div}}.
  \end{equation}

  The appeal of definitions \eqref{trace: dirichlet variation}-\eqref{trace: normal
    variation} is that they do not explicitly  depend on $\Gamma$. In fact, notice that
  they are well-defined for general bounded open sets $\Omega$ without any assumption on
  the regularity of their boundary $\Gamma:=\partial\Omega$.
\end{example}

\begin{prop}\label{lem: kernel of trace}
  Under Assumption \ref{Assumption trace},
  \begin{equation}
    \null(\traceA) = \dom(\rA_k).
  \end{equation}
\end{prop}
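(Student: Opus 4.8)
The plan is to prove the two set inclusions separately, with the whole argument resting on two facts: the defining property of the Hilbert space adjoint $\AT_k=\rA_k^*$, and the reflexivity identity $\rA_k^{**}=\rA_k$, which holds because $\rA_k$ is closed and densely defined. No density or approximation argument is needed.

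For the inclusion $\dom(\rA_k)\subseteq\null(\traceA)$, I would take $\x_\circ\in\dom(\rA_k)$. Since $\A_k$ extends $\rA_k$, we have $\A_k\x_\circ=\rA_k\x_\circ$, and the adjoint relation $(\rA_k\x_\circ,\y)_{\W_{k+1}}=(\x_\circ,\rA_k^*\y)_{\W_k}$, valid for every $\y\in\dom(\rA_k^*)=\dom(\AT_k)$, makes the right-hand side of \eqref{eq: def of trace} vanish identically in $\y$. Hence $\traceA\x_\circ=0$, i.e. $\x_\circ\in\null(\traceA)$.

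For the reverse inclusion, which is the heart of the matter, suppose $\x\in\dom(\A_k)$ satisfies $\traceA\x=0$. Then \eqref{eq: def of trace} gives $(\x,\AT_k\y)_{\W_k}=(\A_k\x,\y)_{\W_{k+1}}$ for all $\y\in\dom(\AT_k)$. I would read this identity through the definition of the adjoint of $\AT_k$: it says that the functional $\y\mapsto(\x,\AT_k\y)_{\W_k}$ agrees with $\y\mapsto(\A_k\x,\y)_{\W_{k+1}}$, which is bounded in the $\W_{k+1}$-norm with constant $\|\A_k\x\|_{\W_{k+1}}$. By the characterization of $\dom\bigl((\AT_k)^*\bigr)$ recalled after \eqref{def: hilbert space adjoint}, this boundedness is exactly the condition for $\x\in\dom\bigl((\AT_k)^*\bigr)$, and moreover $(\AT_k)^*\x=\A_k\x$. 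Since $\AT_k=\rA_k^*$ with $\rA_k$ closed and densely defined, $(\AT_k)^*=\rA_k^{**}=\rA_k$, so $\x\in\dom(\rA_k)$ (with $\rA_k\x=\A_k\x$), completing the inclusion.

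The one point requiring care---and the reason the statement is not completely trivial---is that $\traceA$ maps into the dual of $\dom(\AT_k)$ equipped with its \emph{graph} norm, so a priori the vanishing of $\traceA\x$ only provides continuity of the relevant functional in that stronger norm. The observation that unlocks the proof is that, after the trace identity is invoked, this functional is represented by $\A_k\x\in\W_{k+1}$ and is therefore automatically bounded in the weaker $\W_{k+1}$-norm; this is precisely what lets us land in the domain of the Hilbert space adjoint rather than in some larger space, and it is where the weight of the argument sits.
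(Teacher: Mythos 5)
Your proposal is correct and follows essentially the same route as the paper: the easy inclusion via the adjoint identity $(\rA_k\x_{\circ},\y)_{\W_{k+1}}=(\x_{\circ},\AT_k\y)_{\W_k}$, and the reverse inclusion by observing that $\traceA\x=0$ makes the functional $\y\mapsto(\x,\AT_k\y)_{\W_k}$ bounded in the $\W_{k+1}$-norm, which places $\x$ in $\dom\bigl((\AT_k)^*\bigr)=\dom(\rA_k^{**})=\dom(\rA_k)$. Your closing remark about the graph norm versus the $\W_{k+1}$-norm correctly identifies the crux, which the paper handles with the same estimate $\vert(\x,\AT_k\y)_{\W_k}\vert\leq C_{\x}\norm{\y}_{\W_{k+1}}$.
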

\begin{proof}
  On the one hand, for any $\x_{\circ }\in \dom(\rA_k)$, it follows from
  $\rA_k\subset\A_k$ and \eqref{def: hilbert space adjoint} that
  \begin{align}
    \begin{split}
      \langle\traceA\x_{\circ },\y\rangle_{\dom(\AT_k)'}&=(\A_k\x_{\circ },\y)_{\W_{k+1}} - (\x_{\circ },\AT_k\y)_{\W_k}
      =(\rA_k\x_{\circ },\y)_{\W_{k+1}} - (\x_{\circ},\AT_k\y)_{\W_k}  \\
      &=(\x_{\circ },\AT_k\y)_{\W_{k+1}} - (\x_{\circ },\AT_k\y)_{\W_k}=0
    \end{split}
  \end{align}
  for all $\y\in \dom(\AT_k)$. This shows that $\dom(\rA_k)\subset \null(\traceA)$. 

  On the other hand, if $\x\in \dom(\A_k)$ is such that $\x\in \null(\traceA)$, then
  \begin{align}
    0=\langle\traceA\x,\y\rangle_{\dom(\AT_k)'}=(\A_k\x,\y)_{\W_{k+1}} -
    (\x,\AT_k\y)_{\W_k}
    \quad \forall\y\in \dom(\AT_k).
  \end{align}
  If we set $C_{\x}:=\norm{\x}_{\dom(\A_k)}$, we see that
  \begin{align}
    \vert(\x,\AT_k\y)_{\W_k}\vert &= \vert(\A_k\x,\y)_{\W_{k+1}}\vert
    \leq\norm{\A_k\x}_{\W_{k+1}}\norm{\y}_{\W_{k+1}}
    \leq C_{\x}\norm{\y}_{\W_{k+1}}\quad\forall\y\in \dom(\AT_k).
  \end{align}
  As explained in \Cref{sec: Hilbert complexes}, this means that
  $\x\in \dom((\AT_k)^*)=\dom(\rA_k^{**})=\dom(\rA_k)$.
\end{proof}

\begin{example}{Kernels of classical Hilbert traces}\label{eg: Kernels of classical Hilbert traces}
  Comparing \Cref{lem: kernel of trace} with \eqref{eq: ker Dirichlet trace}-\eqref{eq:
    ket n trace}, we verify that
  \begin{align}
    \null(\trace_{\scriptscriptstyle\mathbf{grad}})=\null(\gamma), &&\null(\trace_{\scriptscriptstyle\mathbf{curl}})=\null(\gamma_t), &&\null(\trace_{\scriptscriptstyle\text{div}})=\null(\gamma_n).
  \end{align}
\end{example}

\begin{rem}
  Intuitively, we think of a trace operator as a means of imposing ``boundary
  conditions''. The idea behind \Cref{def:trace1} is to impose these boundary conditions
  on the operator itself, which is a common strategy in the analysis of variational
  problems and related operator equations. In this work, $\A_k$ is the operator of
  interest. We regard $\rA_k$ as the operator on which boundary conditions are
  imposed. From that perspective, the operator $\AT_k$ does not feature boundary conditions. The right
  hand side of \eqref{eq: def of trace} plays a role akin to the bilinear form involved in
  classical integration by parts formulas.
\end{rem}

\subsection{Trace spaces}
\label{sec: Trace spaces}

Recall that by hypothesis, $\dom(\A_k^*)\subset \dom(\AT_k)$. The next proposition involves
the annihilator of $\dom(\A_k^*)$ in $\dom(\AT_k)'$:
\begin{equation}
  \cred{\dom(\A_k^*)^{\circ}} :=\left\{\,\bm{\phi}\in \dom(\AT_k)'\,\,\vert\,\,\langle\bm{\phi},\y\rangle_{\dom(\AT_k)'} =0\,\,\forall\y\in \dom(\A_k^*)\,\right\}\subset \dom(\AT_k)'.
\end{equation}

\thmbox{
\begin{prop}\label{prop: range is annihilator}
  Under Assumption \ref{Assumption trace}, we find for the ranges of the Hilbert traces
  \begin{equation}
    \range(\traceA) = \dom(\A_k^*)^{\circ}.
  \end{equation}
\end{prop}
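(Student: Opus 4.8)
The plan is to prove the two inclusions separately, with $\range(\traceA)\subseteq\dom(\A_k^*)^{\circ}$ being elementary and the reverse inclusion carrying the real content. For the easy direction, fix $\x\in\dom(\A_k)$ and an arbitrary $\y\in\dom(\A_k^*)$. Since $\rA_k\subset\A_k$ forces $\A_k^*\subset\AT_k$, we have $\y\in\dom(\AT_k)$ with $\AT_k\y=\A_k^*\y$, so that the defining formula \eqref{eq: def of trace} combined with the adjoint identity \eqref{def: hilbert space adjoint} yields $\langle\traceA\x,\y\rangle_{\dom(\AT_k)'}=(\A_k\x,\y)_{\W_{k+1}}-(\x,\A_k^*\y)_{\W_k}=0$. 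Hence every $\traceA\x$ annihilates $\dom(\A_k^*)$, which is exactly $\range(\traceA)\subseteq\dom(\A_k^*)^{\circ}$.

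For the reverse inclusion the idea is to produce an explicit preimage via the Riesz isomorphism $\mathsf{R}_{\dom(\AT_k)}\colon\dom(\AT_k)\to\dom(\AT_k)'$. Given $\bm{\phi}\in\dom(\A_k^*)^{\circ}$, set $\z:=\mathsf{R}_{\dom(\AT_k)}^{-1}\bm{\phi}\in\dom(\AT_k)$, so that $\langle\bm{\phi},\y\rangle_{\dom(\AT_k)'}=(\z,\y)_{\W_{k+1}}+(\AT_k\z,\AT_k\y)_{\W_k}$ for all $\y\in\dom(\AT_k)$. The annihilator property of $\bm{\phi}$ says precisely that $\z$ is orthogonal to $\dom(\A_k^*)$ in the graph inner product of $\dom(\AT_k)$; restricting the displayed identity to $\y\in\dom(\A_k^*)$ and using $\AT_k\y=\A_k^*\y$ gives $(\AT_k\z,\A_k^*\y)_{\W_k}=-(\z,\y)_{\W_{k+1}}$ for all such $\y$.

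The crux is then to read this as a statement about the adjoint of $\A_k^*$. The right-hand side shows that $\y\mapsto(\AT_k\z,\A_k^*\y)_{\W_k}$ is bounded in the $\W_{k+1}$-norm on $\dom(\A_k^*)$, so by the characterization of the domain of an adjoint recalled in \Cref{sec: Operators on Hilbert spaces}, together with $(\A_k^*)^*=\A_k^{**}=\A_k$, we obtain $\AT_k\z\in\dom(\A_k)$ and, via the adjoint identity and the density of $\dom(\A_k^*)$ in $\W_{k+1}$, the relation $\A_k\AT_k\z=-\z$. Setting $\x:=-\AT_k\z\in\dom(\A_k)$ and inserting it into \eqref{eq: def of trace}, a short computation gives $\langle\traceA\x,\y\rangle_{\dom(\AT_k)'}=(\z,\y)_{\W_{k+1}}+(\AT_k\z,\AT_k\y)_{\W_k}=\langle\bm{\phi},\y\rangle_{\dom(\AT_k)'}$ for all $\y\in\dom(\AT_k)$, i.e.\ $\traceA\x=\bm{\phi}$. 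I expect the main obstacle to be exactly this surjectivity step: recognizing that the second graph-component $\AT_k\z$ of the Riesz representative is the element whose Hilbert trace reproduces $\bm{\phi}$, a fact that hinges on the reflexivity $(\A_k^*)^*=\A_k$ and on the density of $\dom(\A_k^*)$.
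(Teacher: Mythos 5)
Your proposal is correct and takes essentially the same route as the paper's own proof: the easy inclusion is argued identically via $\A_k^*\subset\AT_k$ and \eqref{def: hilbert space adjoint}, and your surjectivity step---taking the Riesz representative $\z=\mathsf{R}_{\scaleto{\dom(\AT_k)}{7pt}}^{-1}\bm{\phi}$, using the annihilator property and the boundedness criterion for adjoint domains together with $(\A_k^*)^*=\A_k^{**}=\A_k$ to conclude $\x:=-\AT_k\z\in\dom(\A_k)$ with $\A_k\x=\z$, then verifying $\traceA\x=\bm{\phi}$---is exactly the paper's construction (the paper writes $\w$ for your $\z$). Your explicit appeal to the density of $\dom(\A_k^*)$ in $\W_{k+1}$ when deriving $\A_k\AT_k\z=-\z$ only spells out a step the paper leaves implicit in its citation of \eqref{def: hilbert space adjoint}.
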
}

\begin{proof}	
  Suppose that $\bm{\phi}\in \dom(\A_k^*)^{\circ}$ and let $\w\in \dom(\AT_k)$ be its Riesz representative in $\dom(\AT_k)$, that is
  \begin{align}\label{riesz rep range proof}
    \langle\bm{\phi},\y\rangle_{\dom(\AT_k)'} = (\w,\y)_{\dom(\AT_k)} &&\forall\y\in \dom(\AT_k).
  \end{align}

  We claim that $\x:=-\AT_k\w\in \dom(\A_k)$. Indeed, \eqref{riesz rep range proof}
  implies that for all $\y_*\in \dom(\A_k^*)$, we have
  \begin{equation}\label{eq: riesz rep evaluates to zero in range proof}
    0 = (\w,\y_*)_{\dom(\AT_k)} = (\w,\y_*)_{\W_{k+1}}+ (\AT_k\w,\AT_k\y_*)_{\W_k} = (\w,\y_*)_{\W_{k+1}}+ (\AT_k\w,\A_k^*\y_*)_{\W_k}.
  \end{equation}
  This means $(\w,\y_*)_{\W_{k+1}}=(\x,\A_k^*\y_*)_{\W_k}$.
  Therefore, if we set $C_{\x}:=\norm{\w}_{\W_{k+1}}$, we find the estimate
  \begin{align}
    \vert(\x,\A_k^*\y_*)_{\W_k}\vert = \vert(\w,\y_*)_{\W_{k+1}}\vert\leq\norm{\w}_{\W_{k+1}}\norm{\y_*}_{\W_{k+1}} = C_{\x}\norm{\y_*}_{\W_{k+1}} &&\forall\y_*\in \dom(\A_k^*),
  \end{align}
  which as explained in \Cref{sec: Operators on Hilbert spaces} implies that $\x\in \dom(\A_k^{**})=\dom(\A_k)$.

  In particular, according to \eqref{def: hilbert space adjoint}, it also follows from \eqref{eq: riesz rep evaluates to zero in range proof} that $\A_k\x=\w$. Hence, the inclusion $\range(\traceA) \supset \dom(\A_k^*)^{\circ}$ is verified by observing that for all $\y\in \dom(\AT_k)$,
  \begin{align}
    \begin{split}
      \langle\traceA\x,\y\rangle_{\dom(\AT_k)'} &= (\A_k\x,\y)_{\W_{k+1}} - (\x,\AT_k\y)_{\W_k}
      = (\w,\y)_{\W_{k+1}} + (\AT_k\w,\AT_k\y)_{\W_k}\\
      &= (\w,\y)_{\dom(\AT_k)}
      = \langle \bm{\phi},\y\rangle_{\dom(\AT_k)'},
    \end{split}
  \end{align}
  i.e. $\traceA\x=\bm{\phi}$.

  To show that $\range(\traceA) \subset \dom(\A_k^*)^{\circ}$, let $\bm{\phi}=\traceA\x$
  for some $\x\in \dom(\A_k)$. Then, since $\A_k^*\subset\AT_k$, we obtain by \eqref{def:
    hilbert space adjoint} that for all $\y_*\in \dom(\A_k^*)$
  \begin{align}
    \langle\bm{\phi},\y_*\rangle_{\dom(\AT_k)'} = (\A_k\x,\y_*)_{\W_{k+1}} - (\x,\AT_k\y_*)_{\W_k} = (\x,\A_k^*\y_*)_{\W_{k+1}} - (\x,\A_k^*\y_*)_{\W_k}=0,
  \end{align}
  i.e. $\bm{\phi}\in \dom(\A_k^*)^{\circ}$.
\end{proof}

Since $\dom(\rA_k)$ is a Hilbert subspace of $\dom(\A_k)$, it is closed and we can proceed
with the next definition.
\medskip

\defbox{
\begin{defi}
  \label{def:tracespace1}
  In the setting of \Cref{def:trace1}, we call \emph{trace spaces} the quotient spaces
  \begin{equation}\label{eq: def of trace space}
    \cred{\tracespace:=\dom(\A_k)/\dom(\rA_k)},
  \end{equation}
  equipped with the quotient norm 
  \begin{align}
    \norm{[\x]}_{\tracespace}:=\inf_{\mr{z}\in \dom(\rA_k)}\norm{\x-\mr{\z}}_{\dom(\A_k)}
    \quad
    \forall\x\in \dom(\A_k).
  \end{align}
\end{defi}}

\begin{rem}\label{rem: trace space is quotient by kernel}
  Notice that due to \Cref{lem: kernel of trace},
  \begin{equation}
    \tracespace =\dom(\A_k)/\null(\traceA).
  \end{equation}
\end{rem}

In \Cref{def:tracespace1}, the equivalence class in $\tracespace$ of $\x\in D(\A_k)$ is
denoted $[\x]=\{\x + \mr{\z} \,\vert\, \mr{\z}\in \dom(\rA_k)\}$. Write
$\bm{\pi}^{t}_{\A_k}: \dom(\A_k)\to\tracespace$ for the canonical projection (also
frequently called quotient map), i.e. $\bm{\pi}^t_{\A_k}(\x) = [\x]$. It is an application
of a classical theorem of functional analysis that there exists a bounded orthogonal
projection $ \mathsf{P}_{k}^t: \dom(\A_k)\to \dom(\rA_k)^{\perp} $ onto the complement
space
\begin{equation}
  \dom(\rA_k)^{\perp} :=\left\{\,\x\in \dom(\A_k)\,\,\vert\,\,(\x,\mr{\z})_{\dom(\A_k)}=0\,\,\forall\mr{\z}\in \dom(\rA_k)\,\right\}\subset\dom(\A_k)
\end{equation}
such that
\begin{align}\label{eq: projection norm is norm of quotient}
  \norm{\mathsf{P}_{k}^t\x}_{\dom(\A)} =\norm{[\x]}_{\tracespace} &&\forall\x\in \dom(\A_k),
\end{align}
cf. \cite[Chap. 3.1]{MR0350358} and \cite[Chap. 5]{MR2759829}. Write $\imath^t_{k}:\dom(\rA_k)^{\perp}\hookrightarrow \dom(\A_k)$ for canonical inclusion maps. Since 
$
\null(\mathsf{P}_{k}^t)= \dom(\rA_k)
$ by \eqref{eq: projection norm is norm of quotient} , the bounded linear map $\mathsf{G}_{k}^t:\tracespace\to \dom(\rA_k)^{\perp}$ defined by $\mathsf{G}_{k}^t[\x]:=\mathsf{P}_{k}^t\x$ and involved in the commutative diagram
\begin{equation}\label{eq: commutative diag Pt and Gt}
  \begin{tikzcd}
    \dom(\A_k) \arrow[rr, two heads, blue, "\mathsf{P}_{k}^t"] \arrow[rd, two heads, orange, "\bm{\pi}^t_{k}"'] &  & \dom(\rA_k)^{\perp} \\
    & \dom(\A_k)/\null(\mathsf{P}_{k}^t)=\tracespace \arrow[ru, hookrightarrow, two heads, blue, dotted, "\mathsf{G}_{k}^t"'] & 
  \end{tikzcd}
\end{equation}
as provided by the first isomorphism theorem for modules is a well-defined isometric isomorphism, cf. \cite[Chap. 10.2, Thm. 4]{MR2286236}. Since $\dom(\rA_k)^{\perp}$ is closed \cite[Chap. 3.1, Thm. 1]{MR0350358}, it is a Hilbert space, and therefore so is $\tracespace$. The quotient norm is induced by the inner product 
\begin{align}\label{eq: inner product tracespace}
  ([\x],[\z])_{\tracespace}:=(\mathsf{P}_{k}^t\x,\mathsf{P}_{k}^t\z)_{\dom(\A_k)} &&\forall [\x],[\z]\in \tracespace.
\end{align}
\begin{rem}
  Notice that	$\null(\mathsf{P}_{k}^t)=\dom(\rA_k)=\null(\traceA)$. 
\end{rem}

That the projection $\mathsf{P}_{k}^t$ is orthogonal means that $(\x-\mathsf{P}_{k}^t\x,\z_{\perp})_{\dom(\A_k)}=0$ for all $\x\in \dom(\A_k)$ and $\z_{\perp}\in \dom(\rA_k)^{\perp}$. In other words, $(\id-\mathsf{P}_{k}^t)\x\in \dom(\rA_k)$ for all $\x\in \dom(\A_k)$. Hence, the simple observation that $\id = \mathsf{P}_{k}^t + (\id - \mathsf{P}_{k}^t)$ shows that any element $\x\in \dom(\A_k)$ can be decomposed as
\begin{equation}\label{decomposition of D(A)}
  \x = \x_{\perp}+ \x_{\circ}
\end{equation}
where $\x_{\perp}\in \dom(\rA_k)^{\perp}$ and $\x_{\circ}\in D(\rA_k)$. It is easy to see that the decomposition \eqref{decomposition of D(A)} is unique.

\begin{example}{Trace spaces}\label{spe: trace spaces}
  In the 3D de Rham setting \ref{spe: Classical Hilbert traces}, applying \Cref{def:tracespace1} leads to
  \begin{subequations}
    \label{eq:drqspc}
    \begin{align}
      &\mathcal{T}(\A_0)=\mathcal{T}(\mathbf{grad})=  \Hone/\Honenot,\label{trace space Dirichlet}\\
      &\mathcal{T}(\A_1)=\mathcal{T}(\mathbf{curl})\,\,=  \Hcurl/\Hcurlnot,\\
      &\mathcal{T}(\A_2)=\mathcal{T}(\text{div})\quad =  \Hdiv/\Hdivnot. \label{trace space normal}
    \end{align}
  \end{subequations}
  Based on Example 3.3, the linear mappings
  \begin{subequations}
    \begin{align}
      \mathsf{X}_{\scriptscriptstyle\mathbf{grad}} :\,& \Hone/\Honenot 	\to H^{1/2}(\Gamma),\\
      \mathsf{X}_{\scriptscriptstyle\mathbf{curl}} :\,& \Hcurl/\Hcurlnot\to \mathbf{H}^{-1/2}(\text{curl}_\Gamma,\Gamma),\\
      \mathsf{X}_{\scriptscriptstyle\text{div}} :\,& \Hdiv/\Hdivnot \to H^{-1/2}(\Gamma)
    \end{align}
  \end{subequations}
  defined by
  \begin{subequations}
    \begin{align}
      \mathsf{X}_{\scriptscriptstyle\mathbf{grad}}[u] &= \gamma u &&\forall u\in \Hone,\\
      \mathsf{X}_{\scriptscriptstyle\mathbf{curl}} [\u] &= \gamma_t\u &&\forall \u\in \Hcurl,\\
      \mathsf{X}_{\scriptscriptstyle\text{div}}[\v] &= \gamma_n \v &&\forall \v\in\Hdiv,
    \end{align}
  \end{subequations}
  are the Hilbert space isomorphisms induced by the canonical projections involved in the following commutative diagrams:
  \begin{equation*}
    \begin{tikzcd}[column sep=small]
      \Hone \arrow[r, two heads, "\gamma"] \arrow[d, two heads, "\bm{\pi}^t_{\scriptscriptstyle\mathbf{grad}}"'] &   H^{1/2}(\Gamma)        &     \Hcurl \arrow[r, two heads, "\gamma_t"] \arrow[d, two heads, "\bm{\pi}^t_{\scriptscriptstyle\mathbf{curl}}"']        &   \mathbf{H}^{-1/2}(\text{curl}_\Gamma,\Gamma)    &   \Hcurl \arrow[r, two heads, "\gamma_n"] \arrow[d, two heads, "\bm{\pi}^t_{\scriptscriptstyle\text{div}}"']     &  H^{-1/2}(\Gamma)   \\
      \mathcal{T}(\mathbf{grad}) \arrow[ru, leftrightarrow,  "\mathsf{X}_{\scriptscriptstyle\mathbf{grad}}"']      &                          &     \mathcal{T}(\mathbf{curl}) \arrow[ru, leftrightarrow,  "\mathsf{X}_{\scriptscriptstyle\mathbf{curl}}"']        &      &    \mathcal{T}(\text{div}) \arrow[ru, leftrightarrow,  "\mathsf{X}_{\scriptscriptstyle\mathbf{curl}}"']      &
    \end{tikzcd}
  \end{equation*}
  The trace spaces $H^{1/2}(\Gamma)$, $\mathbf{H}^{-1/2}(\text{curl}_\Gamma,\Gamma)$ and
  $H^{-1/2}(\Gamma)$ can therefore be identified with the quotient spaces
  $\mathcal{T}(\mathbf{grad})$, $\mathcal{T}(\mathbf{curl})$ and
  $\mathcal{T}(\text{div})$, respectively, as we have already observed in
  \eqref{eq:drqs}. Under these identifications, the bounded inverse theorem guarantees
  that the quotient spaces are equipped with equivalent norms.  Moreover, due to the
  Lipschitz regularity of $\Gamma$ and Sobolev extension theorems, the definitions of
  $\mathcal{T}(\mathbf{grad})$, $\mathcal{T}(\mathbf{curl})$ and $\mathcal{T}(\text{div})$
  are intrinsic, in the sense that the quotient spaces
  $H^1(\mathbb{R}^3\backslash\overline{\Omega})/\mr{H}^1(\mathbb{R}^3\backslash\overline{\Omega})$,
  $\mathbf{H}(\mathbf{curl},\mathbb{R}^3\backslash\overline{\Omega})/\mr{\mathbf{H}}(\mathbf{curl},\mathbb{R}^3\backslash\overline{\Omega})$
  and
  $\mathbf{H}(\text{div},\mathbb{R}^3\backslash\overline{\Omega})/\mr{\mathbf{H}}(\text{div},\mathbb{R}^3\backslash\overline{\Omega})$
  are also Hilbert spaces with equivalent norms \cite{MR3101780}.
\end{example}

\begin{lem}\label{lem: equation for perp space}
  Under Assumption \ref{Assumption trace}, if $\x_{\perp}\in \dom(\rA_k)^{\perp}$, then $\A_k\x_{\perp}\in \dom(\AT_k)$ and
  \begin{equation}\label{lem eq ATAJid}
    (\AT_k\A_k + \id)\,\x_{\perp} = 0.
  \end{equation}
\end{lem}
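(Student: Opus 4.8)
The plan is to unfold the orthogonality defining $\dom(\rA_k)^{\perp}$ in the graph inner product of $\dom(\A_k)$ and then recognize the resulting identity as the defining relation of the adjoint $\AT_k=\rA_k^*$. First I would fix $\x_{\perp}\in\dom(\rA_k)^{\perp}$ and, for arbitrary $\mr{\z}\in\dom(\rA_k)$, expand
\[
0=(\x_{\perp},\mr{\z})_{\dom(\A_k)}=(\x_{\perp},\mr{\z})_{\W_k}+(\A_k\x_{\perp},\A_k\mr{\z})_{\W_{k+1}}.
\]
Because $\rA_k\subset\A_k$ gives $\A_k\mr{\z}=\rA_k\mr{\z}$, this rearranges to
\[
(\A_k\x_{\perp},\rA_k\mr{\z})_{\W_{k+1}}=-(\x_{\perp},\mr{\z})_{\W_k}\qquad\forall\,\mr{\z}\in\dom(\rA_k).
\]

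The next step is to read off domain membership. Writing $\y:=\A_k\x_{\perp}$, the right-hand side is bounded by $\norm{\x_{\perp}}_{\W_k}\norm{\mr{\z}}_{\W_k}$, so the functional $\mr{\z}\mapsto(\y,\rA_k\mr{\z})_{\W_{k+1}}$ is continuous with respect to the $\W_k$-norm on $\dom(\rA_k)$. By the characterization of $\dom(\rA_k^*)$ recalled in \Cref{sec: Operators on Hilbert spaces}, this is exactly the criterion for $\y=\A_k\x_{\perp}\in\dom(\AT_k)$, which settles the first assertion.

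It then remains to identify $\AT_k\A_k\x_{\perp}$. Using the defining property \eqref{def: hilbert space adjoint} of $\AT_k=\rA_k^*$ together with the identity derived above,
\[
(\AT_k\A_k\x_{\perp},\mr{\z})_{\W_k}=(\A_k\x_{\perp},\rA_k\mr{\z})_{\W_{k+1}}=-(\x_{\perp},\mr{\z})_{\W_k}
\]
for all $\mr{\z}\in\dom(\rA_k)$, whence $((\AT_k\A_k+\id)\x_{\perp},\mr{\z})_{\W_k}=0$ for every such $\mr{\z}$. Since $\rA_k$ is densely defined, $\dom(\rA_k)$ is dense in $\W_k$, and therefore $(\AT_k\A_k+\id)\x_{\perp}=0$, which is \eqref{lem eq ATAJid}.

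I do not anticipate a genuine obstacle: the whole argument is a short computation. The one point deserving care is the second step, where domain membership in $\dom(\AT_k)$ must be justified using the ambient $\W_k$-norm (not the graph norm), so that the continuity estimate matches the adjoint criterion exactly.
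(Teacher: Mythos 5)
Your proof is correct and follows essentially the same route as the paper: expand the graph-inner-product orthogonality, derive the identity $(\A_k\x_{\perp},\rA_k\z_{\circ})_{\W_{k+1}}=-(\x_{\perp},\z_{\circ})_{\W_k}$, use the boundedness criterion for membership in $\dom(\rA_k^*)=\dom(\AT_k)$, and then invoke the adjoint relation \eqref{def: hilbert space adjoint}. Your explicit final density argument merely spells out what the paper leaves implicit in its closing sentence.
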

\begin{proof}
  Suppose that $\x_{\perp}\in \dom(\rA_k)^{\perp}$. Since $\rA_k\subset\A_k$, we have by definition that
  \begin{align}
    \begin{split}
      0 &= (\x_{\perp},\z_{\circ})_{\dom(\A_k)} = (\x_{\perp},\z_{\circ})_{\W_k} + (\A_k\x_{\perp},\A_k\z_{\circ})_{\W_{k+1}}\\
      &= (\x_{\perp},\z_{\circ})_{\W_k} + (\A_k\x_{\perp},\rA_k\z_{\circ})_{\W_{k+1}}
    \end{split}
  \end{align}
  for all $\z_{\circ}\in \dom(\rA_k)$, which means
  \begin{align}\label{eq: ATA + id}
    (\A_k\x_{\perp},\rA_k\z_{\circ})_{\W_k} = - (\x_{\perp},\z_{\circ})_{\W_k} && \forall\z_{\circ}\in \dom(\rA_k).
  \end{align}
  So by setting $C_{\x_{\perp}}:=\norm{\x_{\perp}}_{\W_k}$, we conclude from the estimate
  \begin{align}
    \vert(\A_k\x_{\perp},\rA_k\z_{\circ})_{\W_{k+1}}\vert= \vert(\x_{\perp},\z_{\circ})_{\W_k}\vert\leq \norm{\x_{\perp}}_{\W_k}\norm{\z_{\circ}}_{\W_k}= C_{\x_{\perp}}\norm{\z_{\circ}}_{\W_k} &&\forall\z_{\circ}\in \dom(\rA_k),
  \end{align}
  that $\A_k\x_{\perp}\in \dom(\rA_k^*)=\dom(\AT_k)$. Then as in \eqref{def: hilbert space adjoint}, the identity \eqref{lem eq ATAJid} follows from \eqref{eq: ATA + id}.
\end{proof}
\begin{cor}\label{restriction of A is isometry}
  Under Assumption \ref{Assumption trace}, the linear map $\A_k: \dom(\rA_k)^{\perp}\to \dom(\AT_k)$ is an isometry.
\end{cor}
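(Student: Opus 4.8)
The plan is to verify the isometry property directly by comparing the two graph norms. Recall that $\dom(\rA_k)^{\perp}$ carries the graph norm of $\A_k$ inherited from $\dom(\A_k)$, while the target $\dom(\AT_k)$ carries the graph norm of $\AT_k$. By \Cref{lem: equation for perp space}, for $\x_{\perp}\in\dom(\rA_k)^{\perp}$ we already know that $\A_k\x_{\perp}\in\dom(\AT_k)$, so the map is well defined and it remains only to show that $\norm{\A_k\x_{\perp}}_{\dom(\AT_k)}=\norm{\x_{\perp}}_{\dom(\A_k)}$ for every such $\x_{\perp}$.

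First I would expand the codomain norm according to its definition as a graph norm,
\begin{equation*}
  \norm{\A_k\x_{\perp}}_{\dom(\AT_k)}^2 = \norm{\A_k\x_{\perp}}_{\W_{k+1}}^2 + \norm{\AT_k\A_k\x_{\perp}}_{\W_k}^2.
\end{equation*}
The crucial input is the identity $(\AT_k\A_k+\id)\,\x_{\perp}=0$ supplied by \Cref{lem: equation for perp space}, which gives $\AT_k\A_k\x_{\perp}=-\x_{\perp}$ and hence $\norm{\AT_k\A_k\x_{\perp}}_{\W_k}=\norm{\x_{\perp}}_{\W_k}$.

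Substituting this back yields
\begin{equation*}
  \norm{\A_k\x_{\perp}}_{\dom(\AT_k)}^2 = \norm{\A_k\x_{\perp}}_{\W_{k+1}}^2 + \norm{\x_{\perp}}_{\W_k}^2 = \norm{\x_{\perp}}_{\dom(\A_k)}^2,
\end{equation*}
which is exactly the desired equality of graph norms. I expect no genuine obstacle at this stage: all the substantive work---both the membership $\A_k\x_{\perp}\in\dom(\AT_k)$ and the operator identity relating $\AT_k\A_k\x_{\perp}$ to $-\x_{\perp}$---has already been carried out in \Cref{lem: equation for perp space}, so the corollary reduces to this short bookkeeping of norms.
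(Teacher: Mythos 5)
Your proof is correct and is essentially identical to the paper's own argument: both invoke \Cref{lem: equation for perp space} to obtain $\AT_k\A_k\x_{\perp}=-\x_{\perp}$ and then expand the graph norm of $\A_k\x_{\perp}$ in $\dom(\AT_k)$ to recover $\norm{\x_{\perp}}_{\dom(\A_k)}$. No gaps; your version merely spells out the well-definedness remark that the paper leaves implicit.
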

\begin{proof}
  Suppose that $\x_{\perp}\in \dom(\rA_k)^{\perp}$. Then, by \Cref{lem: equation for perp space},
  \begin{equation}
    \norm{\A_k\x_{\perp}}_{\dom(\AT_k)}^2=\norm{\A_k\x_{\perp}}_{\W_{k+1}}^2 + \norm{\AT_k\A_k\x_{\perp}}_{\W_{k}}^2 =\norm{\A_k\x_{\perp}}_{\W_{k+1}}^2 + \norm{\x_{\perp}}_{\W_{k}}^2 = \norm{\x_{\perp}}_{\dom(\A_k)}^2.
  \end{equation}
\end{proof}

\thmbox{%
\begin{theo}
  \label{theo:trchar1}
  Under Assumption \ref{Assumption trace}, the linear map
  \begin{equation}\label{eq: def of It iso}
    \cred{\mathsf{I}_{k}^t}: 
    \left\{
      \arraycolsep=1.4pt\def\arraystretch{1.2}
      \begin{array}{rcl}
	\tracespace&\to&\range(\traceA)\\
	{[}\x{]}&\mapsto&\traceA\x
      \end{array}
    \right.
  \end{equation}
  is a well-defined isometric isomorphism.
\end{theo}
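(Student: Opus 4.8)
The plan is to realize $\mathsf{I}_{k}^t$ as the factorization of $\traceA$ through its kernel, as furnished by the first isomorphism theorem, and then to promote this algebraic isomorphism to an isometry by an explicit Riesz-representation computation. For the algebraic part, both well-definedness and injectivity rest on the identity $\null(\traceA)=\dom(\rA_k)$ proved in \Cref{lem: kernel of trace}. Indeed, if $[\x]=[\z]$ then $\x-\z\in\dom(\rA_k)=\null(\traceA)$, so $\traceA\x=\traceA\z$ and the assignment $[\x]\mapsto\traceA\x$ is well-defined; conversely $\mathsf{I}_{k}^t[\x]=\traceA\x=0$ forces $\x\in\dom(\rA_k)$, i.e. $[\x]=0$, giving injectivity. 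Surjectivity onto $\range(\traceA)$ holds by construction. This is precisely the factorization underlying the diagram \eqref{eq: commutative diag Pt and Gt}, now applied to $\traceA$ rather than to $\mathsf{P}_{k}^t$.

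The substance of the statement is the isometry $\|\mathsf{I}_{k}^t[\x]\|_{\dom(\AT_k)'}=\|[\x]\|_{\tracespace}$. To establish it, I would use the orthogonal decomposition \eqref{decomposition of D(A)}, writing $\x=\x_{\perp}+\x_{\circ}$ with $\x_{\perp}=\mathsf{P}_{k}^t\x\in\dom(\rA_k)^{\perp}$ and $\x_{\circ}\in\dom(\rA_k)$. Since $\traceA$ annihilates $\dom(\rA_k)$, we have $\traceA\x=\traceA\x_{\perp}$, while by \eqref{eq: projection norm is norm of quotient} the quotient norm satisfies $\|[\x]\|_{\tracespace}=\|\x_{\perp}\|_{\dom(\A_k)}$. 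It therefore suffices to prove $\|\traceA\x_{\perp}\|_{\dom(\AT_k)'}=\|\x_{\perp}\|_{\dom(\A_k)}$.

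The crux is to identify the Riesz representative in $\dom(\AT_k)$ of the functional $\traceA\x_{\perp}$. By \Cref{lem: equation for perp space}, the element $\w:=\A_k\x_{\perp}$ lies in $\dom(\AT_k)$ and satisfies $\AT_k\w=-\x_{\perp}$. Substituting $\x_{\perp}=-\AT_k\w$ into the defining formula \eqref{eq: def of trace} gives, for every $\y\in\dom(\AT_k)$,
\[
  \langle\traceA\x_{\perp},\y\rangle_{\dom(\AT_k)'}
  =(\w,\y)_{\W_{k+1}}+(\AT_k\w,\AT_k\y)_{\W_k}
  =(\w,\y)_{\dom(\AT_k)}.
\]
Hence $\w=\A_k\x_{\perp}$ is exactly the Riesz representative of $\traceA\x_{\perp}$, so that $\|\traceA\x_{\perp}\|_{\dom(\AT_k)'}=\|\A_k\x_{\perp}\|_{\dom(\AT_k)}$. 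Finally, \Cref{restriction of A is isometry} yields $\|\A_k\x_{\perp}\|_{\dom(\AT_k)}=\|\x_{\perp}\|_{\dom(\A_k)}$, which closes the chain of equalities and proves the isometry.

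I expect the single nonroutine step to be the recognition that the image $\A_k\x_{\perp}$ of the perpendicular component doubles as the Riesz representative of $\traceA\x_{\perp}$; once this is seen, the conclusion follows by merely bundling together the kernel characterization, the identity $\AT_k\A_k\x_{\perp}=-\x_{\perp}$, and the isometry of $\A_k$ on $\dom(\rA_k)^{\perp}$. All remaining verifications are elementary bookkeeping.
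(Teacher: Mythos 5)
Your proof is correct, and its algebraic half (factorization of $\traceA$ through its kernel via the first isomorphism theorem, with well-definedness and injectivity both resting on $\null(\traceA)=\dom(\rA_k)$ from \Cref{lem: kernel of trace}) coincides with the paper's. For the isometry you take a genuinely different, and arguably sharper, final step. The paper proves $\norm{\traceA\x_{\perp}}_{\dom(\AT_k)'}=\norm{\x_{\perp}}_{\dom(\A_k)}$ by a two-sided estimate: a lower bound obtained by inserting the single test element $\y=\A_k\x_{\perp}$ into the supremum defining the dual norm, combined with an upper bound from the operator-norm bound $\norm{\traceA}=1$ of \eqref{eq: norm of weak trace is one}. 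You instead identify $\w:=\A_k\x_{\perp}$ as the \emph{exact} Riesz representative of $\traceA\x_{\perp}$ in $\dom(\AT_k)$; your computation $\langle\traceA\x_{\perp},\y\rangle_{\dom(\AT_k)'}=(\w,\y)_{\dom(\AT_k)}$ is valid because \Cref{lem: equation for perp space} supplies both $\w\in\dom(\AT_k)$ and $\AT_k\w=-\x_{\perp}$, and the isometry of the Riesz map then gives $\norm{\traceA\x_{\perp}}_{\dom(\AT_k)'}=\norm{\A_k\x_{\perp}}_{\dom(\AT_k)}$ in one stroke, with no supremum argument and no appeal to the operator norm of $\traceA$. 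Both routes close identically with \Cref{restriction of A is isometry} and \eqref{eq: projection norm is norm of quotient}. What your version buys is economy plus an explicit inverse: your identity says precisely that $\mathsf{R}_{\dom(\AT_k)}(\A_k\x_{\perp})=\traceA\x_{\perp}$, which is the converse of the harmonic-extension formula the paper only derives afterwards in \Cref{sec: Riesz representatives} (\Cref{lem: riesz rep in domains} and \Cref{lem: inverses of Ik}), so your proof anticipates that machinery. What the paper's version buys is that it stays at the level of elementary Cauchy--Schwarz estimates and never invokes the Riesz representation theorem at this stage.
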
}

\begin{proof}
  Since $\dom(\rA_k)=\null(\trace)$ by \Cref{lem: kernel of trace}, notice that $\mathsf{I}_{k}^t:\tracespace\to \range(\A_k)$ is simply the well-defined induced isomorphism of modules involved in the commutative diagram
  \begin{equation*}
    \begin{tikzcd}
      \dom(\A_k) \arrow[rr, two heads, electricindigo, "\traceA"] \arrow[rd, two heads, orange, "\bm{\pi}^t_{\A_k}"'] &  & \range(\traceA) \\
      & \dom(\A_k)/\null(\traceA)=\tracespace \arrow[ru, hookrightarrow, two heads, dotted, electricindigo, "\mathsf{I}_{\A_k}^t"'] & 
    \end{tikzcd}
  \end{equation*}
  provided by the first isomorphism theorem \cite[Chap. 10.2, Thm. 4]{MR2286236}. It only remains to show that it is an isometry. 

  Let $\x\in \dom(\A_k)$. By \Cref{lem: kernel of trace},
  \begin{equation}\label{eq: iso Ik inproof equation}
    \norm{\,\mathsf{I}_{k}^t[\x]\,}_{\dom(\AT_k)'}=\norm{\traceA\x}_{\dom(\AT_k)'}
    =\norm{\traceA(\x_{\perp}+\x_{\circ})}_{\dom(\AT_k)'}=\norm{\traceA\x_{\perp}}_{\dom(\AT_k)'}.
  \end{equation}
  Using that $\AT_k\A_k\x_{\perp}=-\x_{\perp}$ by \Cref{lem: equation for perp space}, we can choose $\y=\A_k\x_{\perp}\in D(\AT_k)$ to obtain
  \begin{align}
    \begin{split}
      \norm{\traceA\x_{\perp}}_{\dom(\AT_k)'}
      &= \sup_{0\neq\y\in D(\AT_k)}\frac{\vert\langle \traceA\x_{\perp},\y \rangle\vert}{\norm{\y}_{\dom(\AT_k)}}
      \geq \frac{\vert\langle \traceA\x_{\perp},\A_k\x_{\perp} \rangle\vert}{\norm{\A_k\x_{\perp}}_{\dom(\AT_k)}}\\
      &= \frac{\vert(\A_k\x_{\perp},\A_k\x_{\perp})_{\W_{k+1}} - (\x_{\perp},\AT_k\A_k\x_{\perp})_{\W_k}\vert}{\norm{\A_k\x_{\perp}}_{\dom(\AT_k)}}
      =\frac{\norm{\x_{\perp}}_{D(\A_k)}^2}{\norm{\A_k\x_{\perp}}_{\dom(\AT_k)}}.
    \end{split}
  \end{align}
  Recalling that $\norm{\A_k\x_{\perp}}_{\dom(\AT_k)} = \norm{\x_{\perp}}_{\dom(\A_k)}$ by \Cref{restriction of A is isometry}, we arrive at the inequality
  \begin{equation}
    \norm{\traceA\x_{\perp}}_{\dom(\AT_k)'} \geq \frac{\norm{\x_{\perp}}_{D(\A_k)}^2}{\norm{\x_{\perp}}_{D(\A_k)}}
    = \norm{\x_{\perp}}_{D(\A_k)}.
  \end{equation}
  Therefore, on the one hand, $\norm{\,\mathsf{I}_{k}^t[\x]\,}_{\dom(\AT_k)'}\geq \norm{\x_{\perp}}_{\dom(\A_k)}=\norm{[\x]}_{\tracespace}$ by \eqref{eq: projection norm is norm of quotient}. 

  On the other hand, inserting \eqref{eq: norm of weak trace is one} in \eqref{eq: iso Ik inproof equation} leads to the estimate
  \begin{equation}
    \norm{\,\mathsf{I}_{k}^t[\x]\,}_{D(\AT_k)'} =\norm{\traceA\x_{\perp}}_{\dom(\AT_k)} \leq \norm{\traceA}\norm{\x_{\perp}}_{\dom(\A_k)}= \norm{\x_{\perp}}_{\dom(\A_k)} = \norm{[\x]}_{\tracespace},
  \end{equation}
  which concludes the proof.
\end{proof}

It is natural to think of a trace operator as a bounded linear operator from a domain to a trace space. Therefore, based on the identification provided by \Cref{theo:trchar1}, we introduce the following perspective: in the setting of \Cref{def:trace1}, we call \cred{\emph{quotient trace}} the canonical projection
\begin{equation}\label{eq: quotient map as trace t}
  \cred{\bm{\pi}^t_{k}}: \left\{
    \arraycolsep=1.4pt\def\arraystretch{1.2}
    \begin{array}{rcl}
      \dom(\A_k) &\rightarrow &\tracespace\\
      \x &\mapsto &{[}\x{]} 
    \end{array}
  \right. .
\end{equation}

Notice that because $\mathsf{I}_{k}^t$ is an isomorphism, it follows from $\mathsf{I}^t_{k}(\mathsf{I}_{k}^t)^{-1}\traceA\x=\traceA \x = \mathsf{I}_{k}^t[\x]$ that
\begin{equation}\label{eq: pi in terms of I inverse}
  \bm{\pi}^t_{k}\x=(\mathsf{I}_{k}^t)^{-1}\traceA\x.
\end{equation}

\subsection{Riesz representatives}
\label{sec: Riesz representatives}

Let $\mathsf{R}_{\scaleto{\dom(\AT_k)}{7pt}}: \dom(\AT_k)\rightarrow \dom(\AT_k)'$ be the
Riesz isomorphism defined by
$\mathsf{R}_{\scaleto{\dom(\AT_k)}{7pt}}\y=(\y,\cdot)_{\dom(\AT_k)}$ for all
$\y\in\dom(\AT_k)$, cf. \cite[Thm. 5.5]{MR2759829}. Notice that in the first part of the
proof of \Cref{prop: range is annihilator}, we have shown that the following result holds
with $\AT_k\mathsf{R}_{\scaleto{\dom(\AT_k)}{7pt}}^{-1}\bm{\phi}\in \dom(\A_k)$.
\begin{lem}\label{lem: riesz rep in domains}
  Under Assumption \ref{Assumption trace}, if $\bm{\phi}\in \dom(\A_k^* )^{\circ}$, then $\AT_k\mathsf{R}_{\scaleto{\dom(\AT_k)}{7pt}}^{-1}\bm{\phi}\in \dom(\rA_k)^{\perp}$ with
  \begin{align}
    &&(\A_k\AT_k+\id)\,\mathsf{R}_{\scaleto{\dom(\AT_k)}{7pt}}^{-1}\bm{\phi} = 0
    &&\text{and}
    && \trace_{\A_k}(\AT_k\mathsf{R}_{\scaleto{\dom(\AT_k)}{7pt}}^{-1}\bm{\phi})=-\bm{\phi}.
  \end{align}
\end{lem}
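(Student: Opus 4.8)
The plan is to piggyback on the first part of the proof of \Cref{prop: range is annihilator}, which already does almost all the work. Write $\w:=\mathsf{R}_{\scaleto{\dom(\AT_k)}{7pt}}^{-1}\bm{\phi}\in\dom(\AT_k)$ for the Riesz representative of $\bm{\phi}$, so that \eqref{riesz rep range proof} holds, and set $\x:=-\AT_k\w$. For $\bm{\phi}\in\dom(\A_k^*)^{\circ}$, that proof establishes the three facts $\x\in\dom(\A_k)$, $\A_k\x=\w$, and $\traceA\x=\bm{\phi}$; I would simply invoke these as the starting point (in particular $\AT_k\w=-\x\in\dom(\A_k)$, so that $\A_k\AT_k\w$ is meaningful).

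The two displayed identities then follow by pure algebra. Reading $\A_k\x=\w$ as $-\A_k\AT_k\w=\w$ and rearranging gives $(\A_k\AT_k+\id)\w=0$. Likewise, $\traceA\x=\bm{\phi}$ together with $\x=-\AT_k\w$ and the linearity of $\traceA$ yields $\traceA(\AT_k\w)=-\traceA\x=-\bm{\phi}$.

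It remains to verify the membership $\AT_k\w\in\dom(\rA_k)^{\perp}$, equivalently $\x\in\dom(\rA_k)^{\perp}$, which is the only genuinely new step. For arbitrary $\z_{\circ}\in\dom(\rA_k)$ I would expand the graph inner product, using $\A_k\z_{\circ}=\rA_k\z_{\circ}$ (since $\rA_k\subset\A_k$) and $\A_k\x=\w$:
\begin{equation*}
  (\x,\z_{\circ})_{\dom(\A_k)}=(\x,\z_{\circ})_{\W_k}+(\A_k\x,\A_k\z_{\circ})_{\W_{k+1}}=(\x,\z_{\circ})_{\W_k}+(\w,\rA_k\z_{\circ})_{\W_{k+1}}.
\end{equation*}
Because $\x=-\AT_k\w=-\rA_k^*\w$ with $\w\in\dom(\rA_k^*)$, the defining property \eqref{def: hilbert space adjoint} of the Hilbert space adjoint gives $(\x,\z_{\circ})_{\W_k}=-(\w,\rA_k\z_{\circ})_{\W_{k+1}}$, so the two terms cancel and $(\x,\z_{\circ})_{\dom(\A_k)}=0$ for every $\z_{\circ}\in\dom(\rA_k)$. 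This is exactly the orthogonality that defines $\dom(\rA_k)^{\perp}$.

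I do not expect any real obstacle: the substance of the lemma is already present in \Cref{prop: range is annihilator}, and the only fresh ingredient is this short orthogonality computation, which rests on nothing beyond the adjoint relation $\AT_k=\rA_k^*$ and the inclusion $\rA_k\subset\A_k$.
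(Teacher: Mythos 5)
Your proposal is correct and takes essentially the same route as the paper: the paper likewise observes that $\AT_k\mathsf{R}_{\dom(\AT_k)}^{-1}\bm{\phi}\in\dom(\A_k)$ together with the two displayed identities is already contained in the first part of the proof of \Cref{prop: range is annihilator}, and its proof consists solely of the orthogonality check. Your computation of $(\x,\z_{\circ})_{\dom(\A_k)}=0$ is the same cancellation the paper performs (adjoint relation $\AT_k=\rA_k^*$ on the $\W_k$-term against $\A_k\AT_k\w=-\w$ on the graph term), merely phrased in terms of $\x=-\AT_k\w$ instead of $\AT_k\w$ itself.
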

\begin{proof}
  It only remains to show that in particular $\AT_k\mathsf{R}_{\scaleto{\dom(\AT_k)}{7pt}}^{-1}\bm{\phi}\in D(\rA_k)^{\perp}$. Recall that $\AT_k:=\rA_k^*$. Since $\A_k^*\subset\AT_k$, we find, using $(\A_k\AT_k+\id)\,\mathsf{R}_{\scaleto{\dom(\AT_k)}{7pt}}^{-1}\bm{\phi} = 0$, that for all $\x_{\circ}\in \dom(\rA_k)$, 
  \begin{align}
    \begin{split}
      (\AT_k\mathsf{R}_{\scaleto{\dom(\AT_k)}{7pt}}^{-1}\bm{\phi},\x_{\circ})_{\dom(\A_k)}&= (\AT_k\mathsf{R}_{\scaleto{\dom(\AT_k)}{7pt}}^{-1}\bm{\phi},\x_{\circ})_{\W_k} +(\A_k\AT_k\mathsf{R}_{\scaleto{\dom(\AT_k)}{7pt}}^{-1}\bm{\phi},\A_k\x_{\circ})_{\W_{k+1}}\\
      &= (\mathsf{R}_{\scaleto{\dom(\AT_k)}{7pt}}^{-1}\bm{\phi},\rA_k\x_{\circ})_{\W_{k+1}} -(\mathsf{R}_{\scaleto{\dom(\AT_k)}{7pt}}^{-1}\bm{\phi},\rA_k\x_{\circ})_{\W_{k+1}}=0.
    \end{split}
  \end{align}
\end{proof}
Applying $(\mathsf{I}^t_{k})^{-1}$ on both sides of the second identity in \Cref{lem: riesz rep in domains}, we find using \eqref{eq: pi in terms of I inverse} a slightly more explicit expression of the inverse $(\mathsf{I}^t_{k})^{-1}$.
\begin{lem}\label{lem: inverses of Ik}
  Under Assumption \ref{Assumption trace}, we have
  \begin{align}
    (\mathsf{I}_{k}^t)^{-1}\bm{\phi}= -\bm{\pi}^t_{\A_k} (\AT_k\mathsf{R}_{\scaleto{\dom(\AT_k)}{7pt}}^{-1}\bm{\phi}) &&\forall\bm{\phi}\in D(\A_k^*)^{\circ}=\range(\traceA).\label{eq: inverse of I_k}
  \end{align}
\end{lem}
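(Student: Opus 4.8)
The plan is to obtain the claim directly by combining the characterizing property of the inverse trace recorded in \eqref{eq: pi in terms of I inverse} with the explicit preimage constructed in \Cref{lem: riesz rep in domains}. No new estimate is required; the argument is a short chase through those two identities, and the only thing to watch is that every object lands in the space where the map applied to it is defined.

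First I would fix $\bm{\phi}\in\dom(\A_k^*)^{\circ}$. By \Cref{prop: range is annihilator} this membership is equivalent to $\bm{\phi}\in\range(\traceA)$, so $(\mathsf{I}_{k}^t)^{-1}\bm{\phi}$ is meaningful. Setting
\begin{equation*}
  \x:=\AT_k\mathsf{R}_{\scaleto{\dom(\AT_k)}{7pt}}^{-1}\bm{\phi},
\end{equation*}
\Cref{lem: riesz rep in domains} provides exactly what is needed, namely $\x\in\dom(\rA_k)^{\perp}\subset\dom(\A_k)$ together with $\traceA\x=-\bm{\phi}$.

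Next I would apply the identity \eqref{eq: pi in terms of I inverse} to this particular $\x$. Since $\x\in\dom(\A_k)$, it gives $\bm{\pi}^t_{\A_k}\x=(\mathsf{I}_{k}^t)^{-1}\traceA\x$. Substituting $\traceA\x=-\bm{\phi}$ and invoking the linearity of $(\mathsf{I}_{k}^t)^{-1}$ turns this into $\bm{\pi}^t_{\A_k}\x=-(\mathsf{I}_{k}^t)^{-1}\bm{\phi}$; rearranging and unfolding the definition of $\x$ then yields
\begin{equation*}
  (\mathsf{I}_{k}^t)^{-1}\bm{\phi}=-\bm{\pi}^t_{\A_k}\x=-\bm{\pi}^t_{\A_k}\bigl(\AT_k\mathsf{R}_{\scaleto{\dom(\AT_k)}{7pt}}^{-1}\bm{\phi}\bigr),
\end{equation*}
which is the assertion.

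I expect no genuine obstacle here: all of the substance has already been front-loaded into \Cref{lem: riesz rep in domains}, which constructs an explicit element whose Hilbert trace equals $-\bm{\phi}$, and into \eqref{eq: pi in terms of I inverse}, which ties the quotient trace to the inverse of $\mathsf{I}_{k}^t$. The single point meriting care is the verification that $\x$ really lies in $\dom(\A_k)$, so that $\bm{\pi}^t_{\A_k}\x$ is defined and the chase is legitimate; this is precisely the inclusion $\x\in\dom(\rA_k)^{\perp}\subset\dom(\A_k)$ supplied by \Cref{lem: riesz rep in domains}.
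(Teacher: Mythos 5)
Your proposal is correct and is essentially the paper's own argument: the paper likewise applies $(\mathsf{I}_{k}^t)^{-1}$ to the identity $\traceA(\AT_k\mathsf{R}_{\scaleto{\dom(\AT_k)}{7pt}}^{-1}\bm{\phi})=-\bm{\phi}$ from \Cref{lem: riesz rep in domains} and invokes \eqref{eq: pi in terms of I inverse} to rewrite the left-hand side as a quotient trace. Your added check that $\x=\AT_k\mathsf{R}_{\scaleto{\dom(\AT_k)}{7pt}}^{-1}\bm{\phi}$ lies in $\dom(\rA_k)^{\perp}\subset\dom(\A_k)$ is a detail the paper leaves implicit but is consistent with it.
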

\begin{rem}
  The operators $\AT_k\mathsf{R}_{\scaleto{\dom(\AT_k)}{7pt}}^{-1}:\range(\traceA)\rightarrow\dom(\rA_k)^{\perp}\subset\dom(\A_k)$ could be called $\dom(\A_k)$-harmonic extension operators.
\end{rem}

In summary, we have shown so far in \Cref{sec: Basic traces} that the following diagram is commutative:
\begin{equation*}
  \begin{tikzcd}[row sep=huge,column sep=huge]
    & &  \dom(\A_k^*)^{\circ}=\range(\traceA)  \arrow[dl, rightarrow, "-\AT_k\mathsf{R}_{\scaleto{\dom(\AT_k)}{7pt}}^{-1}\quad" left, shift right = 1]\arrow[dd, hookrightarrow, two heads, dashed, electricindigo, "(\mathsf{I}^t_{k})^{-1}\,\," left, shift right = 1] \\
    \dom(\A_k) \arrow[r, two heads, blue, "\mathsf{P}^t_{k}" description, shift right= 2.2] \arrow[rru,two heads, electricindigo, "\traceA" description, bend left=10, shift left = 3] \arrow[rrd,two heads, orange, "\bm{\pi}^t_{k}" description, bend right=10, shift right=3]   &  \arrow[l, hookrightarrow, dashed, "\imath^t_{k}" description, shift right = 2.2] \dom(\rA_k)^{\perp} \arrow[ur,hookrightarrow, two heads, dashed, "\traceA" description, shift right = 2.5]  \arrow[dr, hookrightarrow, two heads, orange, dashed, "\bm{\pi}^t_{k}" description, shift right= 2.5]    \\
    &  & \arrow[ul, hookrightarrow, two heads, blue, dashed, "\mathsf{G}^t_{k}" description, shift right = 2.5]  \tracespace=\dom(\A_k)/\dom(\rA_k) 
    \arrow[uu, hookrightarrow, two heads, dashed, electricindigo, "\,\,\mathsf{I}^t_{k}" right, shift right =2.5] 
  \end{tikzcd}
\end{equation*} 

\section{Duality}
\label{sec: Duality}

In this section, we maintain the setting of Assumption \ref{Assumption trace}, and we
focus on the following snippet of the dual Hilbert complex (cf.~Sections \ref{sec: Hilbert
  complexes} and \ref{sec: Baisc setting}):
\[
  \tikz[
  overlay]{
    \filldraw[fill=black!10,draw=black!10] (5.65,1.1) rectangle (11.9,-1.2);
  }
  \def\arrowlength{6ex}
  \def\arrowdistance{.4}
  \begin{tikzcd}[column sep=\arrowlength, row sep=0.05cm]
    \cdots 
    \arrow[r, leftarrow, shift right=\arrowdistance, "\AT_{k-2}"']
    & 
    \dom(\AT_{k-2}) \subset\W_{k-1}
    \ar[r, leftarrow, shift right=\arrowdistance, "\AT_{k-1}"']
    & 
    \dom(\AT_{k-1}) \subset\W_k
    \arrow[r, leftarrow, shift right=\arrowdistance, "\AT_{k}"']
    & 
    \dom(\AT_{k}) \subset \W_{k+1}
    \arrow[r, leftarrow, shift right=\arrowdistance, "\AT_{k+1}"']
    &
    \cdots \\
    \,
    & 
    \cup\qquad\quad
    & 
    \cup\qquad\quad
    & 
    \cup\qquad\quad
    &
    \, \\
    \cdots 
    \arrow[r, leftarrow, shift right=\arrowdistance, "\A^*_{k-2}"']
    & 
    \dom(A_{k-2}^*) \subset\W_{k-1}
    \ar[r, leftarrow, shift right=\arrowdistance, "\A_{k-1}^{*}"']
    & 
    \dom(A_{k-1}^*) \subset\W_k
    \arrow[r, leftarrow, shift right=\arrowdistance, "\A_{k}^{*}"']
    & 
    \dom(A_{k}^*) \subset \W_{k+1}
    \arrow[r, leftarrow, shift right=\arrowdistance, "\A^*_{k+1}"']
    &
    \cdots
  \end{tikzcd}
\]

Recall the simple though important observation that because $(\AT_k)^*=\rA_k^{**}=\rA_k$, then we have
$\rA_k\subset\A_k\iff\A_k^*\subset\AT_k$. Given two operators $\A_k:\dom(\A_k)\subset\W_k\rightarrow\W_{k+1}$ and $\rA_k:\dom(\rA_k)\subset\W_k\rightarrow\W_{k+1}$ satisfying Assumption \ref{Assumption trace}, the Hilbert space adjoints $\AT_k:\dom(\AT_k)\subset \W_{k+1}\rightarrow\W_k$ and $\A_k^*:\dom(\A_k^*)\subset \W_{k+1}\rightarrow\W_k$ thus also satisfy Assumption \ref{Assumption trace}, but with the roles of $\W_k$ and $\W_{k+1}$ swapped. Indeed, both $\AT_k$ and $\A_k^*$ are densely defined and closed unbounded linear operators between the Hilbert spaces and $\AT_k$ is an extension of $\A_k^*$, i.e. $\dom(\A_k^*)\subset\dom(\AT_k)$ and $\A_k^*\y_*=\AT_k\y_*$ for all $\y_*\in \dom(\A_k^*)$. 

In \Cref{sec: Dual traces}, the \emph{dual Hilbert trace} $\dualtrace_{k}$ will be nothing more than the primal Hilbert trace from \Cref{def: trace enviro} but associated with the pair of operators $\AT_k$ and $\A_k^*$. Nevertheless, we state its properties for completeness and to set up notation, because it will be used for the important duality results of \Cref{sec: Dual system}.

\subsection{Dual traces}
\label{sec: Dual traces}

As before, it follows from \eqref{eq: trace continuity estimate} that the following
operator is well-defined.
\medskip

\defbox{
\begin{defi}
  \label{def:trace2}
  Under Assumption \ref{Assumption trace}, we call \cred{\emph{dual Hilbert trace}} the bounded operator 
  \begin{equation}
    \cred{\dualtrace_{k}:\dom(\AT_k)\rightarrow \dom(\A_k)'},
  \end{equation} defined  for all $\y\in \dom(\AT_k)$ and $\x\in \dom(\A_k)$ by
  \begin{align}\label{eq: def of n trace}
    \langle\dualtrace_{k}\y,\x\rangle_{\dom(\AT_k)'}:= (\AT_k\y,\x)_{\W_k} - (\y,\A_k\x)_{\W_{k+1}}.
  \end{align}
\end{defi}}

As in \eqref{eq: norm of weak trace is one}, we have $\norm{\dualtrace_{k}}=1$, where
$\norm{\cdot}$ is the operator norm. Note that for all $\x\in \dom(\A_k)$ and $\y\in \dom(\AT_k)$, 
\begin{equation}
  \label{eq: weak trace t is minus weak trace n}
  \langle \trace_{k}\x,\y\rangle_{\dom(\A_k)'} = - \langle\x,\dualtrace_{k}\y\rangle_{\dom(\AT_k)'}.
\end{equation}
In other words
\begin{align}\label{eq: weak traces are dual operators}
  (\trace_{k})'=-\dualtrace_{k} &&\text{and} &&(\dualtrace_{k})'=-\trace_{k}.
\end{align}

The results of \Cref{sec: Basic traces} can be mirrored by interchanging the roles of $\A_k$ and $\AT_k$ (and the roles of $\rA_k$ and $\A_k^*$ accordingly). We translate a few of them without proof.

\begin{prop}[cf. \Cref{lem: kernel of trace}]\label{lem: kernel of n trace}
  Under Assumption \ref{Assumption trace}, we have
  \begin{equation}
    \null(\dualtrace_{k}) = \dom(\A_k^*).
  \end{equation}
\end{prop}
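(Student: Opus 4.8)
The plan is to observe that this statement is the exact mirror image of \Cref{lem: kernel of trace} and therefore requires no new ideas. As set up at the beginning of \Cref{sec: Duality}, the pair $(\AT_k, \A_k^*)$ satisfies Assumption \ref{Assumption trace} with the roles of $\W_k$ and $\W_{k+1}$ interchanged: both operators are densely defined and closed, and the inclusion $\A_k^* \subset \AT_k$ puts $\A_k^*$ in the position occupied by $\rA_k$ in the primal setting. Comparing the defining formula \eqref{eq: def of n trace} with \eqref{eq: def of trace}, the dual Hilbert trace $\dualtrace_k$ is precisely the primal Hilbert trace of \Cref{def:trace1} associated with this swapped pair. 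Hence \Cref{lem: kernel of trace} applies directly and returns $\null(\dualtrace_k) = \dom(\A_k^*)$, since $\dom(\A_k^*)$ is now the domain of the ``restricted'' operator, whose domain is the kernel of the trace.

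For a self-contained argument I would reproduce the two inclusions. For $\dom(\A_k^*) \subset \null(\dualtrace_k)$: given $\y_* \in \dom(\A_k^*)$, the inclusion $\A_k^* \subset \AT_k$ together with the defining identity \eqref{def: hilbert space adjoint} of the Hilbert space adjoint gives $(\AT_k \y_*, \x)_{\W_k} = (\A_k^* \y_*, \x)_{\W_k} = (\y_*, \A_k \x)_{\W_{k+1}}$ for every $\x \in \dom(\A_k)$, so the right-hand side of \eqref{eq: def of n trace} vanishes identically and $\y_* \in \null(\dualtrace_k)$. Conversely, if $\y \in \dom(\AT_k)$ satisfies $\dualtrace_k \y = 0$, then $(\y, \A_k \x)_{\W_{k+1}} = (\AT_k \y, \x)_{\W_k}$ for all $\x \in \dom(\A_k)$; setting $C_\y := \norm{\AT_k \y}_{\W_k}$ and applying Cauchy--Schwarz yields $\vert (\y, \A_k \x)_{\W_{k+1}} \vert \leq C_\y \norm{\x}_{\W_k}$. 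By the characterization of $\dom(\A_k^*)$ recalled in \Cref{sec: Operators on Hilbert spaces}, the continuity of the functional $\x \mapsto (\y, \A_k \x)_{\W_{k+1}}$ in the $\W_k$-norm means precisely that $\y \in \dom(\A_k^*)$, which closes the argument.

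I expect no genuine obstacle here, since the content is entirely a transcription of \Cref{lem: kernel of trace}. The only points demanding attention are the bookkeeping of which operator plays the role of the restricted operator $\rA_k$ under the swap, and the implicit use that $\A_k$ and $\rA_k$ being closed and densely defined guarantees that their adjoints $\AT_k = \rA_k^*$ and $\A_k^*$ are again closed and densely defined, so that the swapped pair genuinely meets the hypotheses of Assumption \ref{Assumption trace}.
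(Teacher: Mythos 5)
Your proposal is correct and follows exactly the paper's route: the paper states this proposition without proof, noting that the results of \Cref{sec: Basic traces} are mirrored by interchanging the roles of $\A_k$ and $\AT_k$ (and of $\rA_k$ and $\A_k^*$), which is precisely your swapped-pair argument, and your self-contained transcription of the two inclusions is a faithful (indeed slightly streamlined, since you invoke the definition of $\dom(\A_k^*)$ directly rather than via biduality) rendering of the proof of \Cref{lem: kernel of trace}.
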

The next proposition involves the annihilator of $\dom(\rA_k)$ in $\dom(\A_k)'$:
\begin{equation}
  \dom(\rA_k)^{\circ}:=\{\,\bm{\phi}\in \dom(\A_k)'\,\,\vert\,\,\langle\bm{\phi},\x_{\circ}\rangle =0,\,\forall\x_{\circ}\in \dom(\rA_k)\,\}.
\end{equation} 
\begin{prop}[cf. \Cref{prop: range is annihilator}]\label{prop: range of n is annihilator}
  Under Assumption \ref{Assumption trace}, we have
  \begin{equation}
    \range(\dualtrace_{k}) = \dom(\rA_k)^{\circ}.
  \end{equation}
\end{prop}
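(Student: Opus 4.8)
The plan is to deduce this statement from \Cref{prop: range is annihilator} by exploiting the symmetry spelled out at the beginning of \Cref{sec: Duality}, rather than redoing the computation. Recall the key observation recorded there: the pair $(\AT_k,\A_k^*)$ again satisfies Assumption \ref{Assumption trace}, with the roles of $\W_k$ and $\W_{k+1}$ interchanged, since both operators are densely defined and closed and $\AT_k$ is an extension of $\A_k^*$. Furthermore, comparing \eqref{eq: def of n trace} with \Cref{def:trace1}, the dual Hilbert trace $\dualtrace_k$ is exactly the primal Hilbert trace attached to this swapped pair. Hence \Cref{prop: range is annihilator} can be invoked verbatim for it, and the whole proof reduces to tracking the role assignments.

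First I would fix the dictionary. In the swapped setting the extension (the counterpart of $\A_k$) is $\AT_k$, and the restriction (the counterpart of $\rA_k$) is $\A_k^*$. The operator that figured as $\AT_k=\rA_k^*$ in \Cref{prop: range is annihilator} is now the adjoint of the restriction, namely $(\A_k^*)^*=\A_k^{**}=\A_k$, where $\A_k^{**}=\A_k$ because $\A_k$ is closed and densely defined. This is consistent with \Cref{def:trace2}, where $\dualtrace_k$ indeed maps $\dom(\AT_k)$ into $\dom(\A_k)'$.

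Next I would read off the range. \Cref{prop: range is annihilator} equates the range of the trace with the annihilator, inside the dual of the domain of the adjoint of the restriction, of the domain of the adjoint of the extension. Under the swap the adjoint of the extension $\AT_k$ is $(\AT_k)^*=\rA_k^{**}=\rA_k$, again by closedness and dense definedness of $\rA_k$, while the adjoint of the restriction is $\A_k$. The annihilator is therefore $\dom(\rA_k)^{\circ}$, taken in $\dom(\A_k)'$, and we obtain $\range(\dualtrace_k)=\dom(\rA_k)^{\circ}$ as claimed.

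I expect no genuine difficulty here: the entire argument is bookkeeping, and the only facts used are the two reflexivity identities $\A_k^{**}=\A_k$ and $\rA_k^{**}=\rA_k$. Should a self-contained proof be preferred, one can transcribe the two inclusions from \Cref{prop: range is annihilator}. For $\range(\dualtrace_k)\supseteq\dom(\rA_k)^{\circ}$, represent $\bm{\phi}\in\dom(\rA_k)^{\circ}$ by its Riesz representative $\w\in\dom(\A_k)$, show that $\x:=-\A_k\w$ lies in $\dom(\AT_k)$ using the adjoint-membership criterion of \Cref{sec: Operators on Hilbert spaces}, and check $\dualtrace_k\x=\bm{\phi}$. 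For $\range(\dualtrace_k)\subseteq\dom(\rA_k)^{\circ}$, use $\rA_k\subset\A_k$ to see that $\langle\dualtrace_k\y,\x_\circ\rangle_{\dom(\A_k)'}=0$ for every $\x_\circ\in\dom(\rA_k)$. The symmetry argument is shorter and exposes the parallel structure more clearly.
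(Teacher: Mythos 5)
Your proposal is correct and coincides with the paper's own treatment: the paper states \Cref{prop: range of n is annihilator} explicitly \emph{without proof}, appealing to exactly the symmetry you invoke, namely that $(\AT_k,\A_k^*)$ again satisfies Assumption \ref{Assumption trace} with $\W_k$ and $\W_{k+1}$ swapped, that $\dualtrace_k$ is the primal Hilbert trace of this swapped pair, and that $(\AT_k)^*=\rA_k^{**}=\rA_k$ identifies the resulting annihilator as $\dom(\rA_k)^{\circ}\subset\dom(\A_k)'$. Your explicit dictionary (and the optional transcription via the Riesz representative $\w\in\dom(\A_k)$ and $\x:=-\A_k\w$) is exactly the bookkeeping the paper leaves to the reader, so nothing is missing.
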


\defbox{%
\begin{defi}[cf. \Cref{def:tracespace1}]
  \label{def:tracespace2}
  We call \cred{\emph{dual trace spaces}} the quotient spaces
  \begin{equation}\label{eq: def of dual trace space}
    \cred{\dualtracespace:=\dom(\AT_k)/\dom(\A_k^*)},
  \end{equation}
  equipped with the quotient norm 
  \begin{align}\label{quotient norm dual trace space}
    \norm{[\y]}_{\dualtracespace}:=\inf_{\z_*\in \dom(\A_k^*)}\norm{\y-\z_{*}}_{\dom(\AT_k)} && \forall\y\in \dom(\AT_k).
  \end{align}
\end{defi}}

\begin{rem}
  Just as in \Cref{rem: trace space is quotient by kernel}, notice that due to \Cref{lem: kernel of n trace},
  \begin{equation}
    \dualtracespace =\dom(\AT_k)/\null(\dualtrace_{\A_k}).
  \end{equation}
\end{rem}

In \eqref{quotient norm dual trace space}, we used square brackets to denote the equivalence class in $\dualtracespace$ of $\y\in \dom(\AT_k)$, i.e. $[\y]=\{\y + \z_* \,\vert\, \z_*\in \dom(\A_k^*)\}$. We will write $\bm{\pi}^n_{k} : D(\AT_k)\to\dualtracespace$ for the associated canonical projection (quotient map), i.e. $\bm{\pi}^n_{k}(\y) = [\y]$.  Then, as previously detailed in \Cref{sec: Trace spaces}, there exists a bounded orthogonal projection 
$
\mathsf{P}^n_{k}: \dom(\AT_k)\to \dom(\A_k^*)^{\perp}
$
onto the complement space
\begin{equation}
  \dom(\A_k^*)^{\perp} :=\left\{\,\y\in \dom(\AT_k)\,\,\vert\,\,(\y,\z_*)_{\dom(\AT_k)}=0,\,\forall\z_*\in \dom(\A_k^*)\right\}
\end{equation}
satisfying
$\norm{\mathsf{P}^n_{k}\y}_{\dom(\AT_k)} =\norm{[\y]}_{\dualtracespace}$ for all $\y\in \dom(\AT_k)$. We denote by $\imath^n_{k}:\dom(\A_k^*)^{\perp}\hookrightarrow \dom(\AT_k)$ the canonical inclusion maps. 

The induced operator $\mathsf{G}^n_{k}:\dualtracespace\rightarrow \dom(\A_k^*)^{\perp}$ involved in the commutative diagram
\begin{equation}\label{commutative diagram Pn and Gn}
  \begin{tikzcd}
    \dom(\AT_k) \arrow[rr, two heads, darkspringgreen, "\mathsf{P}^n_{k}"] \arrow[rd, two heads, deepcarminepink, "\bm{\pi}^n_{k}"'] &  & \dom(\A_k^*)^{\perp} \\
    & \dom(\AT_k)/\null(\mathsf{P}^n_{k})=\dualtracespace \arrow[ru, leftrightarrow, darkspringgreen, dotted, "\mathsf{G}^n_{k}"'] & 
  \end{tikzcd}
\end{equation}
is an isometric isomorphism. Accordingly, any $\y\in \dom(\AT_k)$ can be uniquely decomposed as
\begin{align}\label{decomposition of D(AT)}
  \y = \mathsf{P}^n_{k}\y + \y_*, && \y_*:=(\id-\mathsf{P}^n_{k})\y\in \null(\mathsf{P}^n_{k})= \dom(\A_k^*).
\end{align}

\begin{example}{Classical dual traces}
  Using \eqref{eq: weak traces are dual operators}, we find for the de Rham complex that
  \begin{align}
    \dualtrace_{\scriptscriptstyle\mathbf{grad}} = -\gamma'\circ\gamma_n, &&
    \dualtrace_{\scriptscriptstyle\mathbf{curl}} = \gamma_{t}'\circ\gamma_{t},&&
    \dualtrace_{\scriptscriptstyle\text{div}} = -\gamma_n'\circ\gamma.
  \end{align}
  
  Recalling \eqref{eq: ker Dirichlet trace} to \eqref{eq: ket n trace}, we see from the table of the 3D de Rham setting \ref{spe: Boundary conditions} that based on \Cref{lem: kernel of n trace},
  \begin{align}
    \null(\dualtrace_{\scriptscriptstyle\mathbf{grad}})=\null(\gamma_n), &&\null(\dualtrace_{\scriptscriptstyle\mathbf{curl}})=\null(\gamma_t), &&\null(\dualtrace_{\text{div}})&=\null(\gamma).
  \end{align}

  The trace spaces provided by \Cref{def:tracespace2} in this setting are
  \begin{subequations}
    \begin{align}
      \mathcal{T}(\mathbf{grad}^{\top})&=\mathcal{T}(\text{div})=  \Hdiv/\Hdivnot,\\
      \mathcal{T}(\mathbf{curl}^{\top})&=\mathcal{T}(\mathbf{curl})=  \Hcurl/\Hcurlnot,\\
      \mathcal{T}(\text{div}^{\top})&=\mathcal{T}(\mathbf{grad})=  \Hone/\Honenot.\
    \end{align}
  \end{subequations}

  Notice that from \eqref{T0 and T2 are dual}, we also 
  \begin{align}\label{dual traces equality}
    (\trace_{\scriptscriptstyle\text{div}})'=\trace_{\scriptscriptstyle\mathbf{grad}} = -\dualtrace_{\scriptscriptstyle\text{div}}=-(\dualtrace_{\scriptscriptstyle\mathbf{grad}})'&&\text{and} && (\trace_{\scriptscriptstyle\mathbf{grad}})'=\trace_{\scriptscriptstyle\text{div}} = -\dualtrace_{\scriptscriptstyle\mathbf{grad}} = - (\dualtrace_{\scriptscriptstyle\text{div}})'.
  \end{align}
  Moreover, we see that the skew-symmetry behind \eqref{skew-symmetric pairing} is rooted in the fact that the identity $\A_1=\mathbf{curl}=\AT_1$ leads to skew-symmetry of the pairing
  \begin{equation}
    (\x,\y)\mapsto (\mathbf{curl}\,\x,\y)_{\mathbf{L}^2(\Omega)} - (\x,\mathbf{curl}\,\y)_{\mathbf{L}^2(\Omega)}.
  \end{equation}
  This is reflected in the observation that $(\gamma_t'\circ\gamma_t)'=(\trace_{1})'=-\dualtrace_{1}=-\gamma_t'\circ\gamma_t$, which indeed occurs when duality is taken with respect to the skew-symmetric pairing \eqref{skew-symmetric pairing}.
\end{example}

\begin{theo}[cf. \Cref{theo:trchar1}]
  \label{theo:trchar2}
  Under Assumption \ref{Assumption trace}, the linear map
  \begin{equation}\label{eq: def of In iso}
    \cred{\mathsf{I}^n_{k}}:\left\{
      \arraycolsep=1.4pt\def\arraystretch{1.2}
      \begin{array}{rcl}
        \dualtracespace &\to 
        &\range(\dualtrace)\\
        {[}\y{]} &\mapsto &\dualtrace_{\A_k}\y
      \end{array}
    \right.
  \end{equation}
  is a well-defined isometric isomorphism.
\end{theo}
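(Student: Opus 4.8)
The plan is to establish \Cref{theo:trchar2} not by repeating the arguments of \Cref{sec: Basic traces}, but by invoking \Cref{theo:trchar1} after the role swap already announced in the preamble to \Cref{sec: Dual traces}. The governing observation is that the pair $(\AT_k,\A_k^*)$ satisfies \Cref{Assumption trace} verbatim, only with the Hilbert spaces $\W_k$ and $\W_{k+1}$ interchanged: both operators are closed and densely defined, their ranges lie in the respective kernels, and $\AT_k$ is an extension of $\A_k^*$ precisely because $\A_k^*\subset\AT_k$ (equivalently $\rA_k\subset\A_k$, using $(\AT_k)^*=\rA_k^{**}=\rA_k$). Since \Cref{theo:trchar1} is stated for an \emph{arbitrary} pair obeying \Cref{Assumption trace}, it applies with no modification to this one.

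First I would record the identifications that make the transfer exact. The adjoint of $\A_k^*$ is $(\A_k^*)^*=\A_k$ by closedness of $\A_k$, so the primal Hilbert trace of \Cref{def:trace1} attached to the pair $(\AT_k,\A_k^*)$ is an operator $\dom(\AT_k)\to\dom(\A_k)'$ acting by $\y\mapsto(\AT_k\y,\cdot)_{\W_k}-(\y,\A_k\,\cdot)_{\W_{k+1}}$, which is exactly formula \eqref{eq: def of n trace} defining $\dualtrace_k$. Likewise, the trace space assigned to this pair by \Cref{def:tracespace1}, namely $\dom(\AT_k)/\dom(\A_k^*)$, coincides with $\dualtracespace$ from \Cref{def:tracespace2}. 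Thus the map whose properties \Cref{theo:trchar1} certifies, $[\y]\mapsto\dualtrace_k\y$ from $\dualtracespace$ onto $\range(\dualtrace_k)$, is literally $\mathsf{I}^n_k$.

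With these matchings in place, the conclusion is immediate: well-definedness and injectivity come from $\null(\dualtrace_k)=\dom(\A_k^*)$ (\Cref{lem: kernel of n trace}) through the first isomorphism theorem, surjectivity onto the range is automatic, and the isometry property is inherited from the norm computation in the proof of \Cref{theo:trchar1}. That computation rests on \Cref{lem: equation for perp space} and \Cref{restriction of A is isometry} together with the operator-norm identity $\norm{\dualtrace_k}=1$ from \eqref{eq: norm of weak trace is one}, each now read for $\AT_k$ and $\A_k^*$ in place of $\A_k$ and $\rA_k$.

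The only real care needed—the modest main obstacle—is the bookkeeping of the swap: one must confirm that every auxiliary statement feeding the proof of \Cref{theo:trchar1} (\Cref{lem: kernel of trace}, \Cref{prop: range is annihilator}, \Cref{lem: equation for perp space}, \Cref{restriction of A is isometry}, and \eqref{eq: norm of weak trace is one}) transfers correctly under $\A_k\rightsquigarrow\AT_k$, $\rA_k\rightsquigarrow\A_k^*$, $\W_k\leftrightarrow\W_{k+1}$. Since all of these hold for any pair satisfying \Cref{Assumption trace}, and their transferred forms are precisely \Cref{lem: kernel of n trace}, \Cref{prop: range of n is annihilator}, and $\norm{\dualtrace_k}=1$ already recorded, no new analysis is required, and the proof reduces to citing \Cref{theo:trchar1} for the swapped data.
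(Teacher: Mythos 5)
Your proposal is correct and coincides with the paper's own treatment: the paper states \Cref{theo:trchar2} without proof, having already observed at the start of \Cref{sec: Duality} that the pair $(\AT_k,\A_k^*)$ satisfies Assumption \ref{Assumption trace} with the roles of $\W_k$ and $\W_{k+1}$ swapped, so that all results of \Cref{sec: Basic traces}, including \Cref{theo:trchar1}, transfer verbatim. Your bookkeeping of the identifications --- $(\A_k^*)^*=\A_k$, the primal trace of the swapped pair being exactly $\dualtrace_k$ from \eqref{eq: def of n trace}, and the quotient being $\dualtracespace$ --- simply makes explicit the symmetry argument the paper invokes implicitly.
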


We call \emph{dual quotient trace} the canonical projection (cf. \eqref{eq: quotient map as trace t})
\begin{equation}
  \cred{\bm{\pi}^n_{k}}: \left\{
    \arraycolsep=1.4pt\def\arraystretch{1.2}
    \begin{array}{rcl}
      \dom(\AT_k) &\rightarrow &\dualtracespace\\
      \y &\mapsto &{[}\y{]}
    \end{array}
  \right. .
\end{equation}

Similarly as before, notice that (cf. \eqref{eq: pi in terms of I inverse})
\begin{equation}\label{eq: rho in terms of J inverse}
  \bm{\pi}^n_{k}\y=(\mathsf{I}^n_{k})^{-1}\dualtrace_{k}\y,
\end{equation}
and the following diagram is commutative:

\begin{equation*}
  \begin{tikzcd}[row sep=huge,column sep=huge]
    & &  \dom(\rA_k)^{\circ}  \arrow[dl, rightarrow, "-\A_k\mathsf{R}_{\scaleto{\dom(\A_k)}{7pt}}^{-1}\quad" left, shift right = 1]\arrow[dd, hookrightarrow, two heads, dashed, "(\mathsf{I}^n_{k})^{-1}\,\," left, shift right = 1]  \\
    \dom(\AT_k) \arrow[r, two heads, darkspringgreen, "\mathsf{P}^n_{k}" description, shift right= 3] \arrow[rru,two heads, "\dualtrace_{k}" description, bend left=10, shift left = 3] \arrow[rrd,two heads, deepcarminepink, "\bm{\pi}^n_{k}" description, bend right=10, shift right=3]   &  \arrow[l, hookrightarrow, dashed, "\imath_{k}^n" description, shift right = 2.2] \dom(\A_k^*)^{\perp} \arrow[ur,hookrightarrow, two heads, dashed, "\dualtrace_{k}" description, shift right = 3.5]  \arrow[dr, hookrightarrow, two heads, deepcarminepink, dashed, "\bm{\pi}^n_{k}" description, shift right= 4.5]    \\
    &  & \arrow[ul, hookrightarrow, two heads, darkspringgreen, dashed, "\mathsf{G}^n_{k}" description, shift right = 2.3]  \dualtracespace 	\arrow[uu, hookrightarrow, two heads, dashed, "\,\,\mathsf{I}^n_{k}" right, shift right =2.5] 
  \end{tikzcd}
\end{equation*} 

\subsection{Duality of trace spaces}
\label{sec: Dual system}

In this section, we show that the trace spaces $\tracespace$ and $\dualtracespace$ can be
put in duality through an isometry. In fact, this follows immediately from a classical
result in functional analysis. Indeed, according to \cite[Thm. 4.9]{MR1157815}, we have
the isometric isomorphisms
\begin{align}
  \dom(\A^*_k)^{\circ}\cong \left(\dom(\AT_k)/\dom(\A^*_k)\right)'
  &&\text{and} &&
  \dom(\rA_k)^{\circ}\cong \left(\dom(\A_k)/\dom(\rA_k)\right)'.
\end{align}
Combining these results with propositions \ref{prop: range is annihilator} and \ref{prop:
  range of n is annihilator}, along with theorems \ref{theo:trchar1} and
\ref{theo:trchar2},
\begin{subequations}
  \begin{align}
    \tracespace\cong\range(\trace_{k})=
    \dom(\A^*_k)^{\circ}\cong \left(\dom(\AT_k)/\dom(\A^*_k)\right)'=(\dualtracespace)', \\
    \dualtracespace\cong\range(\dualtrace_{k})=\dom(\rA_k)^{\circ}\cong
    \left(\dom(\A_k)/\dom(\rA_k)\right)'=
    (\tracespace)'.
  \end{align}
\end{subequations}

Nevertheless, we provide a detailed proof below, not only for convenience and
completeness, but also because the exercise is illuminating. We proceed with the
definition of a continuous bilinear form on $\tracespace\times\dualtracespace$ and prove
that the associated induced linear operator is an isometry. This pairing will be at the
heart of sections \ref{sec: Compactness} and \ref{sec: Trace Hilbert Complexes}, where it
will be used to prove that Hilbert complexes affording so-called compact regular
decompositions spawn Fredholm trace Hilbert complexes.
\begin{lem}\label{def of pairing b}
  Under Assumption \ref{Assumption trace}, the bilinear form
  \begin{subequations}
    \begin{equation}
      \cred{\llangle\cdot,\cdot\rrangle_{k}:\tracespace\times\dualtracespace\rightarrow\mathbb{R}},
    \end{equation}
    defined by 
    \begin{align}
      \label{eq: def of duality pairing}
      \llangle {[}\x],[\y]\rrangle_{k}:=(\A_k\x,\y)_{\W_{k+1}} - (\x,\AT_k\y)_{\W_k} \quad
      \forall[\x]\in \tracespace,\forall[\y]\in \dualtracespace,
    \end{align}
  \end{subequations}
  is well-defined and continuous with norm $\leq1$.
\end{lem}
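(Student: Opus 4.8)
The plan is to recognize that the defining expression is exactly the pairing $\langle\traceA\x,\y\rangle_{\dom(\AT_k)'}$ of \Cref{def:trace1}, and then to harvest the two kernel characterizations together with the continuity estimate \eqref{eq: trace continuity estimate}, all of which are already available.

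First I would settle well-definedness, i.e. independence of the choice of representatives, checking each slot separately. Fix $\y\in\dom(\AT_k)$ and replace $\x$ by $\x+\x_{\circ}$ with $\x_{\circ}\in\dom(\rA_k)$: the value changes by $(\A_k\x_{\circ},\y)_{\W_{k+1}}-(\x_{\circ},\AT_k\y)_{\W_k}=\langle\traceA\x_{\circ},\y\rangle_{\dom(\AT_k)'}$, which vanishes because $\dom(\rA_k)=\null(\traceA)$ by \Cref{lem: kernel of trace}. Symmetrically, fix $\x$ and replace $\y$ by $\y+\y_*$ with $\y_*\in\dom(\A_k^*)$: the change is $(\A_k\x,\y_*)_{\W_{k+1}}-(\x,\AT_k\y_*)_{\W_k}$, and since $\A_k^*\subset\AT_k$ gives $\AT_k\y_*=\A_k^*\y_*$, the defining property \eqref{def: hilbert space adjoint} of the adjoint yields $(\A_k\x,\y_*)_{\W_{k+1}}=(\x,\A_k^*\y_*)_{\W_k}$, so the change is again zero. (Equivalently, this change equals $-\langle\dualtrace_k\y_*,\x\rangle$, which vanishes because $\dom(\A_k^*)=\null(\dualtrace_k)$ by \Cref{lem: kernel of n trace}.) Hence the form descends to a genuine map on $\tracespace\times\dualtracespace$.

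Next I would establish the norm bound. Because the value $\llangle[\x],[\y]\rrangle_k$ does not depend on the chosen representatives, I may evaluate it on any $\x'\in[\x]$ and $\y'\in[\y]$ and apply \eqref{eq: trace continuity estimate}, obtaining $|\llangle[\x],[\y]\rrangle_k|\leq\norm{\x'}_{\dom(\A_k)}\norm{\y'}_{\dom(\AT_k)}$. Since the left-hand side is a fixed number while the right-hand side is free to be minimized over representatives, I take the infimum first over $\x'\in[\x]$ and then over $\y'\in[\y]$; by the definitions of the quotient norms this delivers $|\llangle[\x],[\y]\rrangle_k|\leq\norm{[\x]}_{\tracespace}\norm{[\y]}_{\dualtracespace}$, which is precisely boundedness with norm $\leq1$.

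The argument is largely bookkeeping, and the only point demanding care is that well-definedness must be verified independently in each slot: the $\x$-slot rests on $\null(\traceA)=\dom(\rA_k)$, whereas the $\y$-slot rests on the adjoint relation $\A_k^*\subset\AT_k$ (equivalently $\null(\dualtrace_k)=\dom(\A_k^*)$). Once both are in place, the representative-independence is exactly what licenses passing from the fixed-representative continuity estimate to the quotient-norm bound via the double infimum, with no further estimation needed.
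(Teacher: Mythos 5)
Your proof is correct and follows essentially the same route as the paper: both rest on $\null(\traceA)=\dom(\rA_k)$ (\Cref{lem: kernel of trace}) and the vanishing of $\traceA\x$ on $\dom(\A_k^*)$ (equivalently \Cref{prop: range is annihilator}) for well-definedness, and on the estimate \eqref{eq: trace continuity estimate} for the bound. The only cosmetic difference is that the paper evaluates on the orthogonal-projection representatives $\mathsf{P}^t_k\x$, $\mathsf{P}^n_k\y$, whose graph norms equal the quotient norms by \eqref{eq: projection norm is norm of quotient}, whereas you take the infimum over arbitrary representatives --- the same argument with the minimizer left implicit.
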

\begin{proof}
  Since $\llangle{[}\u],[\v]\rrangle_{k}=\langle\traceA\x,\y\rangle_{\dom(\A_k)'}$, it is well-defined thanks to \Cref{lem: kernel of trace} and \Cref{prop: range is annihilator}. By the same propositions, the orthogonal decompositions \eqref{decomposition of D(A)} and \eqref{decomposition of D(AT)} yield the estimate
  \begin{align}
    \begin{split}
      \vert\langle\traceA\x,\y\rangle_{\dom(\A_k)'}\vert&=\vert\langle\traceA\mathsf{P}^t_{k}\x,\mathsf{P}^n_{k}\y\rangle_{\dom(\A_k)'}\vert\\
      &=\vert (\A_k\mathsf{P}^t_{k}\x,\mathsf{P}^n_{k}\y)_{\W_{k+1}} - (\mathsf{P}^t_{k}\x,\AT_k\mathsf{P}^n_{k}\y)_{\W_k} \vert\\
      &\leq \|\A_k\mathsf{P}^t_{k}\x\|_{\W_{k+1}}\|\mathsf{P}^n_{k}\y\|_{\W_{k+1}} + \|\mathsf{P}^t_{k}\x\|_{\W_k}\|\AT_k\mathsf{P}^n_{k}\y\|_{\W_k}
      \\
      &\leq\, \|\mathsf{P}^t_{k}\x\|_{\dom(\A_k)}\|\mathsf{P}^n_{k}\y\|_{\dom(\AT_k)}\\
      &= \norm{[\x]}_{\tracespace}\norm{[\y]}_{\dualtracespace},
    \end{split}
  \end{align}
  showing that the bilinear form is continuous with norm $\leq1$.
\end{proof}

The next result shows in particular that $\tracespace$ and $\dualtracespace$ can be put in
duality through the bilinear form $\llangle\cdot,\cdot\rrangle_{k}$.
\medskip

\thmbox{%
\begin{theo}\label{def of B}
  Under Assumption \ref{Assumption trace}, the bounded linear operator
  \begin{equation}\label{eq: definition of K pairing}
    \mathsf{K}_{k}:\left\{
      \arraycolsep=1.4pt\def\arraystretch{1.2}
      \begin{array}{rcl}
        \tracespace &\rightarrow &\dualtracespace'\\
        {[}\x{]} &\mapsto &\llangle{[}\x{]},\cdot\rrangle_{k}
      \end{array}
    \right.
  \end{equation}
  induced by the bilinear form defined in \Cref{def of pairing b} is an isometric isomorphism.
\end{theo}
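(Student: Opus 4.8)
The plan is to establish the two defining properties of an isometric isomorphism separately: that $\mathsf{K}_k$ preserves norms (which yields injectivity for free) and that it is surjective. Continuity with norm $\leq 1$ is already supplied by \Cref{def of pairing b}, and that same estimate immediately gives one half of the isometry, namely $\norm{\mathsf{K}_k[\x]}_{\dualtracespace'}\leq\norm{[\x]}_{\tracespace}$ for every $[\x]\in\tracespace$. The whole argument will closely mirror the proof of \Cref{theo:trchar1}; the one genuinely new ingredient is a judicious choice of test element in $\dualtracespace$.

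For the reverse inequality I would fix $[\x]\in\tracespace$, set $\x_\perp:=\mathsf{P}^t_k\x\in\dom(\rA_k)^{\perp}$ so that $\norm{\x_\perp}_{\dom(\A_k)}=\norm{[\x]}_{\tracespace}$ by \eqref{eq: projection norm is norm of quotient}, and test $\mathsf{K}_k[\x]$ against the class $[\A_k\x_\perp]\in\dualtracespace$. By \Cref{lem: equation for perp space} we have $\A_k\x_\perp\in\dom(\AT_k)$ and $\AT_k\A_k\x_\perp=-\x_\perp$, which collapses the numerator $\llangle[\x],[\A_k\x_\perp]\rrangle_k$ to $\norm{\A_k\x_\perp}_{\W_{k+1}}^2+\norm{\x_\perp}_{\W_k}^2=\norm{\x_\perp}_{\dom(\A_k)}^2$ (using $[\x]=[\x_\perp]$). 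Meanwhile the denominator is controlled from above, since the quotient norm is dominated by any representative and \Cref{restriction of A is isometry} gives $\norm{[\A_k\x_\perp]}_{\dualtracespace}\leq\norm{\A_k\x_\perp}_{\dom(\AT_k)}=\norm{\x_\perp}_{\dom(\A_k)}$. Dividing, the ratio is at least $\norm{\x_\perp}_{\dom(\A_k)}=\norm{[\x]}_{\tracespace}$, whence the supremum defining $\norm{\mathsf{K}_k[\x]}_{\dualtracespace'}$ dominates $\norm{[\x]}_{\tracespace}$; together with the upper bound this proves $\mathsf{K}_k$ is an isometry, hence injective.

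For surjectivity I would take an arbitrary $\bm{\psi}\in\dualtracespace'$ and pull it back along the quotient map to $\bm{\phi}:=\bm{\psi}\circ\bm{\pi}^n_k\in\dom(\AT_k)'$. Because $\bm{\pi}^n_k$ annihilates $\dom(\A_k^*)=\null(\bm{\pi}^n_k)$, the functional $\bm{\phi}$ lies in the annihilator $\dom(\A_k^*)^{\circ}$, which by \Cref{prop: range is annihilator} coincides with $\range(\traceA)$; hence there exists $\x\in\dom(\A_k)$ with $\traceA\x=\bm{\phi}$. Evaluating on a representative $\y$ of any $[\y]\in\dualtracespace$ then gives $\langle\mathsf{K}_k[\x],[\y]\rangle=\llangle[\x],[\y]\rrangle_k=\langle\traceA\x,\y\rangle_{\dom(\AT_k)'}=\langle\bm{\phi},\y\rangle=\bm{\psi}([\y])$, so $\mathsf{K}_k[\x]=\bm{\psi}$.

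The step I expect to require the most care is the lower bound in the isometry: one must select the test class $[\A_k\x_\perp]$ correctly and, crucially, bound its $\dualtracespace$-norm from \emph{above} (rather than below, which the infimum defining the quotient norm would naturally give) so that the ratio collapses exactly to $\norm{[\x]}_{\tracespace}$ instead of merely a multiple of it. Everything else is assembling \Cref{lem: kernel of trace}, \Cref{prop: range is annihilator}, \Cref{lem: equation for perp space} and \Cref{restriction of A is isometry}. As a sanity check, the same conclusion follows abstractly by factoring $\mathsf{K}_k=\Phi\circ\mathsf{I}^t_k$, where $\mathsf{I}^t_k$ is the isometric isomorphism of \Cref{theo:trchar1} and $\Phi:\dom(\A_k^*)^{\circ}\to(\dualtracespace)'$ is the canonical isometric identification of an annihilator with the dual of the associated quotient \cite[Thm. 4.9]{MR1157815}.
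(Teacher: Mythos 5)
Your proposal is correct, and it reaches the conclusion by a somewhat different assembly of the paper's ingredients. The surjectivity step is substantively the paper's: both you and the paper pull a functional $\bm{\psi}\in\dualtracespace'$ back along the quotient map $\bm{\pi}^n_{k}$, observe that $\bm{\psi}\circ\bm{\pi}^n_{k}\in\dom(\A_k^*)^{\circ}=\range(\traceA)$ by \Cref{prop: range is annihilator}, and produce a preimage (the paper phrases this through $\mathsf{I}^t_{k}$ and the isometric isomorphism $\mathsf{G}^n_{k}$, you work with $\traceA$ directly). Where you genuinely diverge is the isometry: the paper factorizes $\mathsf{K}_{k}[\x]([\y])=\mathsf{I}^{t}_{k}[\x](\mathsf{G}^n_{k}[\y])$ and reduces the supremum over unit classes $[\y]$ to a supremum over unit vectors $\y_{\perp}\in\dom(\A_k^*)^{\perp}$, so that the isometry of $\mathsf{K}_{k}$ (and injectivity) is inherited wholesale from \Cref{theo:trchar1}; you instead re-run the explicit test-element computation at the quotient level, pairing $[\x]=[\x_{\perp}]$ against $[\A_k\x_{\perp}]$ and using $\AT_k\A_k\x_{\perp}=-\x_{\perp}$ from \Cref{lem: equation for perp space} together with $\norm{\A_k\x_{\perp}}_{\dom(\AT_k)}=\norm{\x_{\perp}}_{\dom(\A_k)}$ from \Cref{restriction of A is isometry}, correctly bounding the quotient norm of the test class from above by the norm of its representative. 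Both routes are sound; in your ratio argument one should just dispose of the trivial case $\x_{\perp}=0$ (where $[\x]=0$) and note that the nonzero numerator $\norm{\x_{\perp}}_{\dom(\A_k)}^2$ forces $[\A_k\x_{\perp}]\neq 0$. The paper's factorization buys economy and a structural explanation---injectivity, surjectivity and isometry all drop out of previously established isomorphisms---whereas your direct route is self-contained at the level of the pairing $\llangle\cdot,\cdot\rrangle_{k}$ and avoids the supremum reduction via the orthogonal decomposition of $\dom(\AT_k)$, at the cost of duplicating inside this proof the computation already carried out for \Cref{theo:trchar1}. Your closing ``sanity check'' via $\mathsf{K}_{k}=\Phi\circ\mathsf{I}^t_{k}$ and \cite[Thm. 4.9]{MR1157815} is precisely the abstract argument the paper itself records just before \Cref{def of pairing b}.
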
}
\begin{proof}
  The key to the proof is that \eqref{eq: def of duality pairing} permits us to appeal to \Cref{theo:trchar1}.

  Notice that since $\range(\traceA)=\dom(\A_k^*)^{\circ}$, it follows from the orthogonal
  decomposition \eqref{decomposition of D(AT)} that $\mathsf{K}_{k}$ is the pullback by
  $\mathsf{G}^n_{k}$ of $\mathsf{I}^{t}_{k}$,
  i.e. $\mathsf{K}_{k}[\x]([\y])=\mathsf{I}^{t}_{k}[\x](\mathsf{G}^n_{k} [\y])$. We first
  show that it is an isomorphism.

  If $\mathsf{K}_{k}[\x]=\mathsf{K}_{k}[\z]$, then since $\mathsf{G}^n_{k}$ is an
  isomorphism onto $\dom(\A_k^*)^{\perp}$, it then follows from \Cref{prop: range is
    annihilator} and decomposition \eqref{decomposition of D(AT)} that
  $\mathsf{I}^{t}_{k}[\x](\y)=\mathsf{I}^{t}_{k}[\z](\y)$ for all $\y\in \dom(\AT_k)$. But
  $\mathsf{I}^{t}_{k}$ is also an isomorphism, so
  $\mathsf{I}^{t}_{k}[\x]=\mathsf{I}^{t}_{k}[\z]$ implies that $\x=\z$ and we conclude
  that $\mathsf{K}_{k}$ is injective.
  
  Suppose that $\bm{\phi}\in \dualtracespace'$. Then the pullback of $\bm{\phi}$ by the canonical quotient map $\bm{\pi}^n_{k}: \dom(\AT_k)\rightarrow \dualtracespace$ is a bounded linear functional on $\dom(\AT_k)$, i.e. $\bm{\phi}\circ \bm{\pi}^n_{k}\in \dom(\AT_k)'$. Indeed, this simply holds because
  \begin{align}
    \vert\bm{\phi}(\bm{\pi}^n_{k}\y)\vert = \norm{\bm{\phi}}\norm{\bm{\pi}^n_{k}\y}_{\dualtracespace} \leq \norm{\bm{\phi}}\norm{\bm{\pi}^n_{k}}\norm{\y}_{\dom(\AT_k)} &&\forall\y\in\dom(\AT_k).
  \end{align} 
  Moreover, since $\null(\bm{\pi}^n_{k})=\dom(\A_k^*)$, we find in particular that $\bm{\phi}\circ\bm{\pi}^n_{k}\in \dom(\A_k^*)^{\circ}=\range(\trace_k)$. But $\mathsf{I}^{t}_{k}$ is an isomorphism onto $\range(\traceA)$, so there exists $[\x]\in \tracespace$ such that
  $\mathsf{I}^{t}_{k}[\x]=\bm{\phi}\circ\bm{\pi}^n_{k}$. Evaluating
  \begin{equation}
    \mathsf{K}_{k}[\x] = \mathsf{I}^{t}_{k}[\x]\circ\mathsf{G}^n_{k} = \bm{\phi}\circ\bm{\pi}^n_{k}\circ\mathsf{G}^n_{k} =\bm{\phi}
  \end{equation}
  shows that $\mathsf{K}_{k}$ is surjective.
  
  We now prove that $\mathsf{K}_{k}$ is an isometry. Using similar arguments as above, we estimate
  \begin{equation}
    \norm{\mathsf{K}_{k}[\x]}=\sup_{\substack{[\y]\in \dualtracespace,\\\norm{[\y]}_{\dualtracespace}=1}}\vert\mathsf{K}_{k}{[\x]}([\y])\vert
    = \sup_{\substack{\y_{\perp}\in \dom(\A_k^*)^{\perp},\\\norm{\y_{\perp}}_{\dom(\AT_k)}=1}}\vert\mathsf{I}^t_{k}[\x](\y_{\perp})\vert
    = \norm{\mathsf{I}^t_{k}[\x]}
    =\norm{[\x]}_{\tracespace}.
  \end{equation}
\end{proof}

We have arrived at an integration by parts formula involving the traces from \Cref{sec: Hilbert traces} and \Cref{sec: Dual traces}: for all $\x\in \dom(\A_k)$ and $\y\in \dom(\AT_k)$,
\begin{equation}
  (\A_k\x,\y)_{\W_{k+1}}-(\x,\AT_k\y)_{\W_k} = \llangle\bm{\pi}^t_{k}\x,\bm{\pi}^n_{k}\y\rrangle_{\A_k}.
\end{equation}

\Cref{def of B}, in combination with \eqref{IBP div} and \eqref{IBP curl}, reveals the abstract version of the duality observed for the de Rham complex in \Cref{sec: intro}.

\section{Operators on Trace Spaces}
\label{sec: Operators on trace spaces}

Starting from this section, we start exploiting more of the structure of Hilbert complexes by introducing the \emph{minimal} Hilbert complex setting required to define what we will call \emph{surface operators}. We ``zoom in'' on short snippets of \eqref{Hilbert complex Ak} and \eqref{Hilbert complex rAk} of the form
\begin{equation}\label{Hilbert complex Ak minimal}
  \tikz[
  overlay]{
    \filldraw[fill=black!10,draw=black!10] (1.7,1.2) rectangle (11.7,-1);
  }
  \def\arrowlength{6ex}
  \def\arrowdistance{.8}
  \begin{tikzcd}[column sep=\arrowlength, row sep=0.05cm]
    \cdots 
    \arrow[r, rightarrow, shift left=\arrowdistance, "\A_{k-1}"] 
    & 
    \dom(\A_{k})\subset\W_{k} 
    \ar[r, rightarrow, shift left=\arrowdistance, "\A_{k}"] 
    & 
    \dom(\A_{k+1})\subset\W_{k+1}
    \arrow[r, rightarrow, shift left=\arrowdistance, "\A_{k+1}"] 
    & 
    \dom(\A_{k+2})\subset\W_{k+2}
    \arrow[r, rightarrow, shift left=\arrowdistance, "\A_{k+2}"] 
    &
    \cdots \\
    \,
    & 
    \cup\qquad\quad
    & 
    \cup\qquad\quad
    & 
    \qquad\quad
    &
    \, \\
    \cdots 
    \arrow[r, rightarrow, shift left=\arrowdistance, "\rA_{k-1}"] 
    & 
    \dom(\rA_{k})\subset\W_{k} 
    \ar[r, rightarrow, shift left=\arrowdistance, "\rA_{k}"] 
    & 
    \dom(\rA_{k+1})\subset\W_{k+1}
    \arrow[r, rightarrow, shift left=\arrowdistance, "\rA_{k+1}"] 
    & 
    \dom(\rA_{k+2})\subset\W_{k+2}
    \arrow[r, rightarrow, shift left=\arrowdistance, "\rA_{k+2}"] 
    &
    \cdots 
  \end{tikzcd}
\end{equation}
We may call the highlighted sequences ``minimal Hilbert complexes''. The index $k$ should
be considered arbitrary but fixed in this section.


\begin{example}{Minimal Hilbert complexes}
  \label{spe: Minimal Hilbert complexes}
  Based on the 3D de Rham setting \ref{spe: L2 de Rham complex} and \ref{spe: Boundary
    conditions}, we obtain two minimal complexes such as \eqref{Hilbert complex Ak
    minimal}. For $k=0$, we have
  \begin{equation}\label{Hilbert complex Ak minimal de Rham k=0}
    \def\arrowlength{6ex}
    \def\arrowdistance{.8}
    \begin{tikzcd}[column sep=\arrowlength, row sep = tiny]
      \Hone\subset \Ltwo
      \ar[r, rightarrow, shift left=\arrowdistance, "\mathbf{grad}"] 
      & 
      \Hcurl\subset\bLtwo
      \arrow[r, rightarrow, shift left=\arrowdistance, "\mathbf{curl}"] 
      & 
      \bLtwo,\\
      \Honenot\subset\Ltwo
      \ar[r, rightarrow, shift left=\arrowdistance, "\mathbf{grad}"] 
      & 
      \Hcurlnot\subset\bLtwo
      \arrow[r, rightarrow, shift left=\arrowdistance, "\mathbf{curl}"] 
      & 
      \bLtwo.
    \end{tikzcd}
  \end{equation}
  For $k=1$, we get
  \begin{equation}\label{Hilbert complex Ak minimal de Rham k=1}
    \def\arrowlength{6ex}
    \def\arrowdistance{.8}
    \begin{tikzcd}[column sep=\arrowlength, row sep = tiny]
      \Hcurl\subset \bLtwo
      \ar[r, rightarrow, shift left=\arrowdistance, "\mathbf{curl}"] 
      & 
      \Hdiv\subset\bLtwo
      \arrow[r, rightarrow, shift left=\arrowdistance, "\text{div}"] 
      & 
      \subset\bLtwo,\\
      \Hcurlnot\subset \bLtwo
      \ar[r, rightarrow, shift left=\arrowdistance, "\mathbf{curl}"] 
      & 
      \Hdivnot\subset\bLtwo
      \arrow[r, rightarrow, shift left=\arrowdistance, "\text{div}"] 
      & 
      \bLtwo.
    \end{tikzcd}
  \end{equation}
\end{example}

\subsection{Surface operators in domains}
\label{sec: Surface operators in domains} 
Notice that due to the complex property, we have in particular that
$\range(\A_k)\subset \dom(\A_{k+1})$ and $\range(\AT_{k+1})\subset \dom(\AT_{k})$.  The
following key operators are thus well-defined.
\medskip

\defbox{\begin{defi} \label{def: weak trace space operator}
  We call \cred{\emph{surface operators}} the bounded linear maps
  \begin{subequations}
    \begin{align}
      &\cred{\D^t_{k}:=(\AT_{k+1})':\dom(\AT_{k})'\rightarrow \dom(\AT_{k+1})'},\label{def: surface operator Dt}\\
      &\cred{\D^n_{k+1}:=\A_k':\dom(\A_{k+1})'\rightarrow \dom(\A_k)'},\label{def: surface operator Dn}
    \end{align}
  \end{subequations}
  dual to $\AT_{k+1}:\dom(\AT_{k+1})\rightarrow \dom(\AT_{k})$ and $\A_k:\dom(\A_k)\rightarrow \dom(\A_{k+1})$, respectively. Equivalently,
  \begin{subequations}
    \begin{align}
      \label{D51a}
      \langle\D^t_k\bm{\phi},\z\rangle_{\dom(\AT_{k+1})'} &= \langle
      \bm{\phi},\AT_{k+1}\z\rangle_{\dom(\AT_{k})'},
      &&\forall\bm{\phi}\in \dom(\AT_{k})',\forall\z\in \dom(\AT_{k+1})\subset\W_{k+2},\\
      \label{D51b}
      \langle\D^n_{k+1}\bm{\psi},\y\rangle_{D(\A_k)'} &= \langle
      \bm{\psi},\A_k\x\rangle_{\dom(\A_{k+1})'},
      &&\forall\bm{\psi}\in \dom(\A_{k+1})',\forall\x\in \dom(\A_k)\subset\W_k.
    \end{align}
  \end{subequations}
\end{defi}}
\begin{rem}
  Recall the distinction made in \Cref{sec: Operators on Hilbert spaces} between the
  notation for bounded and unbounded linear operators. We point out that in \Cref{def:
    weak trace space operator}, the operators
  $\AT_{k+1}:\dom(\AT_{k+1})\rightarrow \dom(\AT_{k})$ and
  $\A_k:\dom(\A_k)\rightarrow \dom(\A_{k+1})$ are \emph{bounded}.
\end{rem}

\begin{rem}
  The name `surface operators' was chosen by analogy with standard surface operators on
  the boundary of a domain, despite the fact that there is no boundary involved in the
  above definition. The relation between \Cref{def: weak trace space operator} and
  standard surface operators is made more explicit in the 3D de Rham settings \ref{spe:
    Surface operators in domains} and \ref{Commutative relations}.
\end{rem}

\begin{example}{Surface operators in domains}
  \label{spe: Surface operators in domains}
  In the 3D de Rham setting \ref{spe: Minimal Hilbert complexes}, we find the surface operators
  \begin{subequations}
    \begin{align}
      \D^t_0&:=\mathbf{curl}':\Hdiv'\rightarrow\Hcurl',\\
      \D^t_{1}&:=(-\mathbf{grad})':\Hcurl'\rightarrow\widetilde{H}^{-1}(\Omega),
    \end{align}
  \end{subequations}
  dual to the $\emph{bounded}$ operators 
  \begin{align}
    \mathbf{curl}:\Hcurl\rightarrow\Hdiv &&\text{and} &&-\mathbf{grad}:\Hone\rightarrow\Hcurl,
  \end{align}
  where we have written $\widetilde{H}^{-1}(\Omega):=\Hone'$. In other words,
  \begin{subequations}
    \begin{align}
      \langle\D^t_0\bm{\phi},\v\rangle_{\Hcurl'} &= \langle \bm{\phi},\mathbf{curl}\,\v\rangle_{\Hdiv'}, &&\forall\bm{\phi}\in \Hdiv'\forall\v\in \Hcurl,\\
      \langle\D^t_{1}\bm{\phi},\u\rangle_{\widetilde{H}^{-1}(\Omega)} &= \langle \bm{\psi},-\mathbf{grad} u\rangle_{\Hcurl'} &&\forall\bm{\phi}\in \Hcurl',\forall u \in \Hone.
    \end{align}
  \end{subequations}

  In the adjoint perspective, the bounded linear operators
  \begin{subequations}
    \begin{align}
      \D^n_1&:=\mathbf{grad}':\Hcurl'\rightarrow\widetilde{H}^{-1}(\Omega)\\
      \D^n_2&:=\mathbf{curl}':\Hdiv'\rightarrow\Hcurl'
    \end{align}
  \end{subequations}
  are dual to the bounded linear operators 
  \begin{align}
    \mathbf{grad}:\Hone\rightarrow\Hcurl &&\text{and} &&\mathbf{curl}:\Hcurl\rightarrow\Hdiv.
  \end{align}
  That is,
  \begin{subequations}
    \begin{align}
      \langle\D^n_{1}\bm{\phi},u\rangle_{\widetilde{H}^{-1}(\Omega)} &= \langle \bm{\phi},\mathbf{grad} u\rangle_{\Hcurl'} &&\forall\bm{\phi}\in \Hcurl',\forall u\in \Hone,\\
      \langle\D^n_2\bm{\psi},\v\rangle_{\Hcurl'} &= \langle \bm{\psi},\mathbf{curl}\,\v\rangle_{\Hdiv'} &&\forall\bm{\psi}\in \Hdiv',\forall\v\in \Hcurl.
    \end{align}
  \end{subequations}
\end{example}

Since
\begin{subequations}
  \begin{align}
    &\range(\A_k)\subset \dom(\A_{k+1})= \dom(\trace_{k+1}), && \range(\trace_{k})\subset \dom(\AT_{k})'=\dom(\D^t_k),\\
    &\range(\AT_{k+1})\subset \dom(\AT_{k})= \dom(\dualtrace_{k}), && \range(\dualtrace_{k+1})\subset \dom(\A_{k+1})'=\dom(\D^n_{k+1}),
  \end{align}
\end{subequations}
the linear operators
\begin{subequations}
  \begin{align}
    \D^t_k\circ\trace_{k}:\dom(\A_k)\rightarrow \dom(\AT_{k+1})',
    &&\trace_{k+1}\circ\A_k:\dom(\A_k)\rightarrow \dom(\AT_{k+1})',\\
    \D^n_{k+1}\circ\dualtrace_{k+1}:\dom(\AT_{k+1})\rightarrow \dom(\A_k)',
    &&\dualtrace_{k}\circ\AT_{k+1}:\dom(\AT_{k+1})\rightarrow \dom(\A_k)',
  \end{align} 
\end{subequations}
are also well-defined and bounded.
\medskip

\thmbox{%
\begin{lem}
  \label{lem: commutative relations}
  Assumption~\ref{Assumption trace} implies the following commuting relations:
  \begin{align}
    -\D^t_k\circ\trace_{k} = \trace_{k+1}\circ\A_k
    &&\text{and}
    && -\D^n_{k+1}\circ\dualtrace_{k+1}=\dualtrace_{k}\circ\AT_{k+1}.
  \end{align}
\end{lem}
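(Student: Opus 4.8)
The plan is to prove each of the two operator identities by evaluating both sides as functionals against arbitrary test elements and reducing everything to the two defining integration-by-parts formulas \eqref{eq: def of trace} and \eqref{eq: def of n trace}, together with the complex property. The two cancellations that make the identities work are $\A_{k+1}\circ\A_k=0$ on the primal side and its dual-complex counterpart $\AT_k\circ\AT_{k+1}=0$ (equivalently $\range(\AT_{k+1})\subset\null(\AT_k)$), both guaranteed by Assumption~\ref{Assumption trace}. Since all four composites in the statement are bounded linear maps into a dual space, it suffices to check agreement as linear functionals.

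For the first identity, I would fix $\x\in\dom(\A_k)$ and test against an arbitrary $\z\in\dom(\AT_{k+1})$. On the left, unfolding $\D^t_k=(\AT_{k+1})'$ via \eqref{D51a} gives $\langle-\D^t_k\trace_{k}\x,\z\rangle_{\dom(\AT_{k+1})'}=-\langle\trace_{k}\x,\AT_{k+1}\z\rangle_{\dom(\AT_{k})'}$, which is legitimate because $\AT_{k+1}\z\in\null(\AT_k)\subset\dom(\AT_k)$. Expanding $\trace_{k}$ by \eqref{eq: def of trace} yields $-(\A_k\x,\AT_{k+1}\z)_{\W_{k+1}}+(\x,\AT_k\AT_{k+1}\z)_{\W_k}$, and the second term drops out since $\AT_k\AT_{k+1}=0$. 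On the right, $\A_k\x\in\range(\A_k)\subset\dom(\A_{k+1})$, so $\trace_{k+1}\A_k\x$ is defined, and \eqref{eq: def of trace} gives $(\A_{k+1}\A_k\x,\z)_{\W_{k+2}}-(\A_k\x,\AT_{k+1}\z)_{\W_{k+1}}$, whose first term vanishes by $\A_{k+1}\A_k=0$. Both sides thus equal $-(\A_k\x,\AT_{k+1}\z)_{\W_{k+1}}$, establishing $-\D^t_k\circ\trace_{k}=\trace_{k+1}\circ\A_k$.

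For the second identity I would proceed symmetrically, using that $\dualtrace_{k}$ is exactly the primal Hilbert trace of Definition~\ref{def: trace enviro} attached to the pair $(\AT_k,\A_k^*)$ with the roles of $\W_k$ and $\W_{k+1}$ swapped. Fixing $\y\in\dom(\AT_{k+1})$ and testing against $\x\in\dom(\A_k)$, I unfold $\D^n_{k+1}=\A_k'$ via \eqref{D51b} to get $-\langle\dualtrace_{k+1}\y,\A_k\x\rangle_{\dom(\A_{k+1})'}$; expanding with \eqref{eq: def of n trace} and discarding the term carrying $\A_{k+1}\A_k\x=0$ collapses this to $-(\AT_{k+1}\y,\A_k\x)_{\W_{k+1}}$. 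On the right, $\AT_{k+1}\y\in\null(\AT_k)\subset\dom(\AT_k)$, so $\dualtrace_{k}\AT_{k+1}\y$ is defined, and \eqref{eq: def of n trace} together with $\AT_k\AT_{k+1}=0$ produces the same quantity, giving $-\D^n_{k+1}\circ\dualtrace_{k+1}=\dualtrace_{k}\circ\AT_{k+1}$.

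The computations are otherwise routine, so I do not expect a genuine obstacle. The only points that demand care are the domain-bookkeeping checks — verifying at each step that $\AT_{k+1}\z$, $\A_k\x$, and $\AT_{k+1}\y$ land in the domains on which the subsequent trace or surface operator acts — and correctly identifying the dual-complex relation $\AT_k\circ\AT_{k+1}=0$ as the incarnation of $\range(\AT_{k+1})\subset\null(\AT_k)$. Keeping the index shifts consistent between the primal complex and its dual is the one place where a sign or index slip could creep in, so I would double-check that the annihilated terms are precisely those supplied by the two complex properties.
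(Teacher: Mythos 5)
Your proposal is correct and follows essentially the same route as the paper's proof: unfold $\D^t_k=(\AT_{k+1})'$ against a test element $\z\in\dom(\AT_{k+1})$, expand both sides via the defining formula \eqref{eq: def of trace}, and let the complex properties $\A_{k+1}\circ\A_k=0$ and $\AT_k\circ\AT_{k+1}=0$ annihilate the extra terms, leaving $\mp(\A_k\x,\AT_{k+1}\z)_{\W_{k+1}}$ on the two sides. The only cosmetic difference is that the paper dispatches the second identity ``by symmetry'' where you write it out explicitly, and your domain-bookkeeping remarks (e.g.\ $\AT_{k+1}\z\in\null(\AT_k)\subset\dom(\AT_k)$, $\A_k\x\in\dom(\A_{k+1})$) are exactly the checks the paper records just before \Cref{lem: commutative relations}.
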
}

  \begin{proof}
    By symmetry, we need to verify only one relation. Recall that because of the complex property $\A_{k+1}\circ\A_k=0$, we also have $\AT_{k}\circ\AT_{k+1}=0$. Therefore, for all $\x\in \dom(\A_k)\subset\W_k$ and $\z\in \dom(\AT_{k+1})\subset\W_{k+2}$, we have on the one hand that
    \begin{align}
      \begin{split}
        \langle \D^t_k\trace_{k}\x,\z\rangle_{\dom(\AT_{k+1})'} &= \langle \trace_{k}\x,\AT_{k+1}\z\rangle_{\dom(\AT_{k})'}
        =(\A_k\x,\AT_{k+1}\z)_{\W_{k+1}}-(\u,\AT_{k}\AT_{k+1}\z)_{\W_k}\\
        &=(\A_k\x,\AT_{k+1}\z)_{\W_{k+1}}.
      \end{split}
    \end{align}
    On the other hand, we also evaluate
    \begin{align}
      \begin{split}
        \langle \trace_{k+1}\A_k\x,\z\rangle_{\dom(\AT_{k+1})'}
        &=(\A_{k+1}\A_k\x,\z)_{\W_{k+2}}-(\A_k\x,\AT_{k+1}\z)_{\W_{k+1}}\\
        &=-(\A_k\x,\AT_{k+1}\z)_{\W_{k+1}}.
      \end{split}
    \end{align}
  \end{proof}
  
\begin{rem}
  Consistent with \eqref{eq: weak traces are dual operators}, $(\D^t_k\circ\trace_{k})'=\D^n_{k+1}\circ\dualtrace_{k+1}$ and $\D^t_k\circ\trace_{k}=(\D^n_{k+1}\circ\dualtrace_{k+1})'$.
\end{rem}
\Cref{lem: commutative relations} states that the following diagrams commute:
\begin{equation}
  \begin{tikzcd}[row sep=large, column sep = large]
    \dom(\A_k)\arrow[r,rightarrow,"\A_k"] & \dom(\A_{k+1})& && &\dom(\AT_{k+1})\arrow[r,rightarrow,"\AT_{k+1}"] &  \dom(\AT_{k})\\
    \range(\trace_{k})\arrow[u,twoheadleftarrow, "\trace_{k}"]\arrow[r,rightarrow,"-\D^t_k"] &  \range(\trace_{k+1})\arrow[u,twoheadleftarrow, "\trace_{k+1}"] && & &\range(\dualtrace_{k+1})\arrow[u,twoheadleftarrow, "\dualtrace_{k+1}"]\arrow[r,rightarrow,"-\D^n_{k+1}"] &  \range(\dualtrace_{k})\arrow[u,twoheadleftarrow,"\dualtrace_{k}"]
  \end{tikzcd}
\end{equation}

An important consequence of this result is that
\begin{equation}\label{eq: Dt maps trace space to trace space}
  \mathsf{D}^t_k(\range(\trace_{k}))\subset\range(\trace_{k+1})=\dom(A^*_{k+1})^{\circ},
\end{equation}
an observation that is key to the introduction of trace Hilbert complexes in later sections.

\begin{example}{Commutative relations}\label{Commutative relations}
  In the 3D de Rham setting, it follows from \eqref{dual traces equality} that the four relations obtained from \Cref{lem: commutative relations} boil down to the single identity
  \begin{equation}\label{eq: applied commutative identities}
    \mathbf{grad}'\gamma_{t}'\circ\gamma_t = \gamma'\circ\gamma_n\mathbf{curl}.
  \end{equation}
  In particular, \eqref{eq: applied commutative identities} states that for all $\u\in\Hcurl$ and $v\in\Hone$,
  \begin{equation}\label{eq: int equation n curl 1}
    \int_{\Gamma}v\,\mathbf{n} \cdot\mathbf{curl}\,\u\,\dif\sigma = \int_{\Gamma}\mathbf{n}\times(\u\times\mathbf{n})\cdot (\mathbf{grad}\,v \times\mathbf{n})\,\dif\sigma.
  \end{equation}

  Recall that $\mathbf{n}\cdot\mathbf{curl}= \text{curl}_{\Gamma}\circ\gamma_t$ on $\Hcurl$, while the $L^2(\Gamma)$-dual operator $\text{curl}_{\Gamma}=\mathbf{curl}_{\Gamma}'$ is such that $\mathbf{grad}\cdot\times\mathbf{n}=\mathbf{curl}_{\Gamma}\circ\gamma$ on $\Hone$. Therefore, \eqref{eq: int equation n curl 1} expresses that
  \begin{align}\label{surface indentity 1}
    \int_{\Gamma} u\,\text{curl}_{\Gamma}\u\,\dif\sigma= \int_{\Gamma}
    \mathbf{curl}_{\Gamma} \u\cdot u \,\dif\sigma \quad \forall u\in H^{1/2}(\Gamma),\,\u\in \mathbf{H}^{-1/2}(\text{curl}_{\Gamma},\Gamma).
  \end{align}
  We conclude that the duality between the surface operators and their surface vector
  calculus counterparts in classical trace spaces is indeed captured by the duality in
  \Cref{sec: Dual system} and \Cref{lem: commutative relations}.

  We point out that if one works with the $\mathbf{L}^2(\Gamma)$-pairing instead of the
  skew-symmetric pairing \eqref{skew-symmetric pairing} from the start, then the two
  isometrically isomorphic perspectives of tangential and ``rotated'' tangential traces
  from \cite{buffa2002traces} are also captured by the abstract theory. Indeed, by
  introducing the trace $\gamma_{\tau}:\cdot\mapsto\cdot\times\mathbf{n}$, one obtains
  $\trace_{\mathbf{curl}}=\gamma_t'\circ\gamma_{\tau}$ and
  $\dualtrace_{\mathbf{curl}}=-\gamma_{\tau}'\circ\gamma_{\tau}$, which also satisfy
  \eqref{eq: weak traces are dual operators}. With these definitions, \Cref{lem:
    commutative relations} leads to two identities corresponding to \eqref{surface
    indentity 1} and
  \begin{align}
    \int_{\Gamma} v\,\text{div}_{\Gamma}\v\,\dif\sigma = -\int_{\Gamma}
    \mathbf{grad}_{\Gamma} v\cdot\v \dif\sigma
    \quad\forall v\in H^{1/2}(\Gamma),\,\v\in \mathbf{H}^{-1/2}(\text{div}_{\Gamma},\Gamma),
  \end{align}
  which is a ``rotated'' version of \eqref{surface indentity 1}, where
  $\gamma_n\mathbf{curl}=\text{div}_{\Gamma}\gamma_{\tau}$ on $\Hcurl$ and
  $\mathbf{H}^{-1/2}(\text{div}_{\Gamma},\Gamma)$ is defined by analogy with
  \eqref{def:minushalf curl trace space}.
\end{example}

\subsection{Surface operators in quotient spaces}
\label{sec:Surface operators in quotient spaces}
Let us investigate the properties of the linear operators between trace spaces induced by
the surface operators defined in \Cref{sec: Surface operators in domains}.
\medskip

\defbox{%
\begin{defi} \label{def: trace space operator}
  We call \cred{\emph{quotient surface operators}} the bounded linear maps
  \begin{align}\label{eq: def of S trace operators}
    \cred{\S^t_k} : \left\{
      \arraycolsep=1.4pt\def\arraystretch{1.2}
      \begin{array}{rcl}
        \mathcal{T}(\A_k) &\rightarrow &\mathcal{T}(\A_{k+1})\\
        {[}\x{]} &\mapsto &\bm{\pi}^t_{k+1}\A_k\x 
      \end{array}
    \right.
    &&\text{and}&&
    \cred{\S^n_{k+1}}:
    \left\{
      \arraycolsep=1.4pt\def\arraystretch{1.2}
      \begin{array}{rcl}
	\mathcal{T}(\AT_{k+1})&\rightarrow &\mathcal{T}(\AT_{k})\\	
        {[}\z{]}&\mapsto &\bm{\pi}^n_{k}\AT_{k+1}\z
      \end{array}
    \right..
  \end{align}
\end{defi}}
\medskip

We verify that $\S^t_k$ is well-defined. The analogous result holds for $\S^n_{k+1}$ by
duality. Suppose that $\x_{\circ}\in \dom(\rA_k)$. By the complex property, we evaluate
\begin{align}
  \begin{split}
    \llangle\bm{\pi}^t_{k+1}\A_k\x_{\circ},[\z]\rrangle_{\A_{k+1}}
    &=(\A_{k+1}\A_k\x_{\circ},\z)_{\W_{k+2}} - (\A_k\x_{\circ},\AT_{k+1}\z)_{\W_{k+1}}\\
    &=- (\rA_k\x_{\circ},\rA_{k+1}^*\z)_{\W_{k+1}}\\
    &= - (\rA_{k+1}\rA_k\x_{\circ},\z)_{\W_{k+1}}
    =0
  \end{split}
\end{align}
for all $\z\in \dom(\AT_{k+1})\subset\W_{k+2}$. By \Cref{sec: Dual system}, we conclude that $\bm{\pi}^t_{k+1}\A_k\mr{\x}=0$.

From the above, we also find that for all $\x\in \dom(\A_k)\subset\W_k$ and $\z\in \dom(\AT_{k+1})\subset\W_{k+2}$,
\begin{align}
  \llangle\mathsf{S}^t_k\circ\bm{\pi}^t_{k}\,\x,\bm{\pi}^n_{k+1}\,\z\rrangle_{k+1}  = - (\A_k\x,\AT_{k+1}\z)_{\W_{k+1}}=-\llangle\bm{\pi}^t_{k}\,\x,\S^n_{k+1}\circ\bm{\pi}^n_{k+1}\,\z\rrangle_{k}.
\end{align}
We can view the identity
\begin{align}\label{eq: IBP in trace spaces}
  \llangle\mathsf{S}^t_k[\x],[\z]\rrangle_{k+1}=-\llangle{[}\x],\S^n_{k+1}[\z]\rrangle_{k}
  \quad
  \forall[\x]\in\tracespace,\forall[\z]\in \mathcal{T}(\AT_{k+1}),
\end{align}
as an integration by parts formula in (quotient) trace spaces. 

Recalling \Cref{sec: Dual system}, we can rewrite \eqref{eq: IBP in trace spaces} as 
\begin{equation}
  \mathsf{K}_{k+1}\circ\S^t_k=-(\S^n_{k+1})'\circ\mathsf{K}_{k},
\end{equation}
which gives rise to the commutative diagram
\begin{equation}
  \label{commutative diagram surface operators in trace spaces}
  \begin{tikzcd}
    \mathcal{T}(\AT_{k})' \arrow[r, "-(\S^n_{k+1})'"]& \mathcal{T}(\AT_{k+1})'\\
    \mathcal{T}(\A_k)\arrow[r, "\S^t_k"]\arrow[u,hookrightarrow, two heads, dashed, "\mathsf{K}_{k}\quad"] &\mathcal{T}(\A_{k+1}) \arrow[u,hookrightarrow, two heads, dashed,"\quad\mathsf{K}_{k+1}" right]
  \end{tikzcd}
\end{equation}

We end this section by putting the results of the subsections \ref{sec: Surface operators
  in domains} and \ref{sec:Surface operators in quotient spaces} together into a single
diagram. On the one hand, for all $\x\in \dom(\A_k)$ and $\z\in \dom(\AT_{k+1})$, we find
from the proof of \Cref{lem: commutative relations} that
\begin{align}
  \begin{split}
    \langle\D^t_k\circ\trace_{k}\,\x,\z\rangle_{\dom(\AT_{k+1})'}&=\langle\D^n_{k+1}\circ\dualtrace_{k+1}\z,\x\rangle_{\dom(\A_k)'}\\
    &=\llangle\bm{\pi}^t_{k}\,\x,\S^n_{k+1}\circ\bm{\pi}^n_{k+1}\,\z\rrangle_{k}\\
    &=-\llangle\mathsf{S}^t_{k}\circ\bm{\pi}^t_{k}\,\x,\bm{\pi}^n_{k+1}\,\z\rrangle_{k+1}.
  \end{split}
\end{align}
On the other hand, we have by definition
\begin{align}
  \mathsf{S}^t_k\bm{\pi}^t_{k}\,\x = \bm{\pi}^t_{k+1}\A_k\x &&{\text{and}} &&\S^n_{k+1}\bm{\pi}^n_{k+1}\,\z =\bm{\pi}^n_{k}\AT_{k+1}\z.
\end{align}
Also recall \eqref{eq: pi in terms of I inverse} and \eqref{eq: rho in terms of J inverse}. In summary, the following diagrams are commutative:
\begin{equation}\label{commutative diagram surface operators in domain spaces}
  \begin{tikzcd}[row sep=large, column sep = large]
    \mathcal{T}(\A_k)\arrow[r,rightarrow, "\S^t_k"]\arrow[dd, twoheadleftarrow, two heads, dashed, bend right=50, shift right = 5, "\mathsf{I}^t_{k}" description] & \mathcal{T}(\A_{k+1})\arrow[dd, twoheadleftarrow, two heads, dashed, bend left=50, shift left = 5, "\mathsf{I}^t_{k+1}" description] & && &\mathcal{T}(\AT_{k+1})\arrow[r,rightarrow, "\S^n_{k+1}"]\arrow[dd, twoheadleftarrow, two heads, dashed, bend right=50, shift right = 5, "\mathsf{I}^n_{k+1}" description] & \mathcal{T}(\AT_{k})\arrow[dd, twoheadleftarrow, two heads, dashed, bend left=50, shift left = 5, "\mathsf{I}^n_{k}" description]\\
    \dom(\A_k)\arrow[u,rightarrow, two heads, "\bm{\pi}^t_{k}"]\arrow[r,rightarrow,"\A_k"] &  \dom(\A_{k+1})\arrow[u,rightarrow, two heads, "\bm{\pi}^t_{k+1}"] & && &\dom(\AT_{k+1})\arrow[u,rightarrow, two heads, "\bm{\pi}^n_{k+1}"]\arrow[r,rightarrow,"\AT_{k+1}"] &  \dom(\AT_{k})\arrow[u,rightarrow, two heads, "\bm{\pi}^n_{k}"]\\
    \range(\trace_{k})\arrow[u,twoheadleftarrow, "\trace_{k}"]\arrow[r,rightarrow,"-\D^t_k"] &  \range(\trace_{k+1})\arrow[u,twoheadleftarrow, "\trace_{k+1}"] && & &\range(\dualtrace_{k+1})\arrow[u,twoheadleftarrow, "\dualtrace_{k+1}"]\arrow[r,rightarrow,"-\D^n_{k+1}"] &  \range(\dualtrace_{k})\arrow[u,twoheadleftarrow,"\dualtrace_{k}"]
  \end{tikzcd}
\end{equation}

\section{Trace spaces: Characterization by Regular Subspaces}
\label{Characterization by regular subspaces}

\subsection{Bounded regular decompositions}
\label{sec: Bounded regular decompositions}
In this section, we augment Assumption \ref{Assumption trace}. We first detail results in
the setting of \Cref{def: trace enviro} for primal Hilbert traces, then formulate their
analogs in the dual setting of \Cref{def:trace2}. By symmetry, the primal and dual
settings are evidently two faces of the same coin. From an abstract point of view, they
are identical. Nevertheless, the dual setting is presented for convenience. The two
settings are covered independently to avoid loosing sight of the core considerations.

\subsubsection{Primal decomposition}
\label{sec: Primal decomposition}
Now, we aim at a more detailed characterization of the space $\dom(\AT_{k})'$. Recall that
by the complex property, $\range(\AT_{k+1})\subset\dom(\AT_{k})$.

We refer to \cite[Def. 2.12]{PS2021a} for the next assumption, which introduces additional
structure.
\begin{statement}
  \label{B}
  \label{Assumption continuous and dense embeddings AT}
  For all $k\in\mathbb{Z}$, Assumption \ref{Assumption trace} holds along with the
  following hypotheses:

  \begin{enumerate}[label=\textbf{\emph{\Roman*}}]
  \item The Hilbert spaces $\cred{\Wp_{k}}\subset\W_{k}$ are such that the inclusion maps spawn
    continuous and dense embeddings
    \begin{align}
      \label{dense embedding Yp and ZP}
      \Wp_{k}\hookrightarrow\dom(\AT_{k-1}). 
    \end{align}
    \label{assum B: Wplus}
  \item There exist bounded operators 
    \begin{align}
      \label{lifting and potential op}
      \cred{\mathsf{L}^t_{k+1}}: \dom(\AT_{k})\rightarrow\Wp_{k+1} &&
      \text{and} &&
      \cred{\mathsf{V}^t_{k+1}}:\dom(\AT_{k})\rightarrow \Wp_{k+2}
    \end{align}
    such that
    \begin{align}
      \label{decomposition of DAT with operators}
      \y=(\mathsf{L}^t_{k+1}+\AT_{k+1}\mathsf{V}^t_{k+1})\,\y&&\forall\y\in \dom(\AT_{k}).
    \end{align}\label{assum B: regular decomposition}
  \item The Hilbert spaces
    \begin{equation}
      \cred{\Wp_{k+2}(\AT_{k+1})}:=\left\{\,\z\in\Wp_{k+2}\,\,\vert\,\,\AT_{k+1}\z\in\Wp_{k+1}\,\right\},
    \end{equation}
    equipped with the graph inner product defined for all $\z_1,\z_z\in \Wp_{k+2}(\AT_{k+1})$ by
    \begin{equation}
      (\z_1,\z_2)_{\Wp_{k+2}(\AT_{k+1})} :=(\z_1,\z_2)_{\Wp_{k+2}} + (\AT_{k+1}\z_1,\AT_{k+1}\z_2)_{\Wp_{k+1}},
    \end{equation}
    are such that the inclusions $\Wp_{k+2}\subset \W_{k+2}$
    induce continuous and dense embeddings
    \begin{align}
      \Wp_{k+2}(\AT_{k+1})\hookrightarrow\dom(\AT_{k-1}). 
    \end{align}
  \end{enumerate}
\end{statement}

We adopt a shorter notation for the dual spaces: 
\begin{align}\label{eq: def of Wminus}
  \Wm_k:=(\Wp_k)',\quad k\in\mathbb{Z}.
\end{align} 

\begin{rem} In Hypothesis \ref{assum B: regular decomposition}, \eqref{decomposition of DAT with operators} is a stable regular decomposition of the form
  \begin{equation}\label{D(AT) regular decomposition}
    \dom(\AT_{k}) = \Wp_{k+1} + \AT_{k+1}\Wp_{k+2},\quad k\in\mathbb{Z}.
  \end{equation}
  By stable, we mean that the lifting and potential operators in \eqref{lifting and
    potential op} are bounded. We call it regular due to Hypothesis \ref{assum B: Wplus},
  based on which we can imagine the $\Wp_k$s as subspaces of ``extra regularity''.
\end{rem}

\begin{rem}
  The decomposition in \eqref{decomposition of DAT with operators}/\eqref{D(AT) regular decomposition} need not be direct.
\end{rem}

\begin{rem}
  Assumption \ref{Assumption continuous and dense embeddings AT} is stated for all
  $k\in\mathbb{Z}$. Strictly speaking, in the setting of a minimal complex with
  $k\in\mathbb{Z}$ fixed, to which we adhere in this section, only one stable regular
  decomposition (the one written in \eqref{decomposition of DAT with operators} and
  involving the regular spaces $\Wp_{k+1}$ and $\Wp_{k+2}$) is necessary for the
  characterization of $\dom(\AT_k)'$ and $\range(\trace_k)$.
\end{rem}

\begin{lem}
  \label{lem: extension of D to tildeYm}
  Under Assumption \ref{Assumption continuous and dense embeddings AT}, the surface
  operator $\D^t_{k}:\dom(\AT_{k})'\rightarrow \dom(\AT_{k+1})'$ defined in \eqref{def:
    surface operator Dt} can be extended to a continuous mapping
  \begin{equation}\label{D mapping}
    \cred{\D^t_{k}}:\left\{
      \arraycolsep=1.4pt\def\arraystretch{1.2}
      \begin{array}{rcl}
	\Wm_{k+1}&\rightarrow &\Wp_{k+2}(\AT_{k+1})'\\
        \bm{\phi} &\mapsto &\langle\bm{\phi}, \AT_{k+1} \cdot\,\rangle_{\Wm_{k+1}}
      \end{array}
    \right. ,
  \end{equation}
  still designated by the same notation. 
\end{lem}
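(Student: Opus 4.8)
The plan is to read the extension straight off the defining formula and then check the three things this kind of claim requires: that the pairing makes sense pointwise on $\Wp_{k+2}(\AT_{k+1})$, that the resulting map is bounded, and that it genuinely prolongs the original $\D^t_k$ through the natural embeddings supplied by Assumption \ref{Assumption continuous and dense embeddings AT}.

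First I would fix the embeddings by transposing the dense continuous inclusions of Hypothesis \ref{assum B: Wplus}. Using \eqref{dense embedding Yp and ZP} with index $k+1$ gives $\Wp_{k+1}\hookrightarrow\dom(\AT_{k})$ densely and continuously; transposing the inclusion yields a continuous injection $\dom(\AT_{k})'\hookrightarrow\Wm_{k+1}$, injective precisely because $\Wp_{k+1}$ is dense in $\dom(\AT_{k})$. Thus $\dom(\AT_{k})'$ is a subspace of $\Wm_{k+1}$, and on test vectors drawn from $\Wp_{k+1}$ the $\Wm_{k+1}$-pairing agrees with the $\dom(\AT_{k})'$-pairing. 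Likewise, \eqref{dense embedding Yp and ZP} with index $k+2$ gives $\Wp_{k+2}\hookrightarrow\dom(\AT_{k+1})$; since $\Wp_{k+2}(\AT_{k+1})\subset\Wp_{k+2}$ with $\norm{\cdot}_{\Wp_{k+2}}\le\norm{\cdot}_{\Wp_{k+2}(\AT_{k+1})}$, the composite inclusion $\Wp_{k+2}(\AT_{k+1})\hookrightarrow\dom(\AT_{k+1})$ is continuous, and its transpose is the restriction map $\dom(\AT_{k+1})'\rightarrow\Wp_{k+2}(\AT_{k+1})'$ into which the values of the original $\D^t_k$ are mapped.

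For well-definedness and boundedness I would use the very definition of $\Wp_{k+2}(\AT_{k+1})$: every $\z\in\Wp_{k+2}(\AT_{k+1})$ satisfies $\AT_{k+1}\z\in\Wp_{k+1}$, with $\norm{\AT_{k+1}\z}_{\Wp_{k+1}}\le\norm{\z}_{\Wp_{k+2}(\AT_{k+1})}$ from the graph inner product. Hence for $\bm{\phi}\in\Wm_{k+1}$ the quantity $\langle\bm{\phi},\AT_{k+1}\z\rangle_{\Wm_{k+1}}$ is a legitimate $\Wm_{k+1}$-pairing, and
\[
  |\langle\bm{\phi},\AT_{k+1}\z\rangle_{\Wm_{k+1}}|\le\norm{\bm{\phi}}_{\Wm_{k+1}}\norm{\AT_{k+1}\z}_{\Wp_{k+1}}\le\norm{\bm{\phi}}_{\Wm_{k+1}}\norm{\z}_{\Wp_{k+2}(\AT_{k+1})}.
\]
So $\z\mapsto\langle\bm{\phi},\AT_{k+1}\z\rangle_{\Wm_{k+1}}$ is a bounded functional, i.e.\ $\D^t_{k}\bm{\phi}\in\Wp_{k+2}(\AT_{k+1})'$, and $\bm{\phi}\mapsto\D^t_{k}\bm{\phi}$ is bounded with operator norm at most $1$.

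Finally, for consistency I would take $\bm{\phi}\in\dom(\AT_{k})'\subset\Wm_{k+1}$ and, for $\z\in\Wp_{k+2}(\AT_{k+1})\subset\dom(\AT_{k+1})$, compare the two formulas: the original definition \eqref{D51a} gives $\langle\D^t_{k}\bm{\phi},\z\rangle_{\dom(\AT_{k+1})'}=\langle\bm{\phi},\AT_{k+1}\z\rangle_{\dom(\AT_{k})'}$, while the new formula gives $\langle\bm{\phi},\AT_{k+1}\z\rangle_{\Wm_{k+1}}$; these coincide because $\AT_{k+1}\z\in\Wp_{k+1}$ and the $\Wm_{k+1}$-pairing restricts to the $\dom(\AT_{k})'$-pairing there. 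Thus the new operator restricted to $\dom(\AT_{k})'$ is the restriction to $\Wp_{k+2}(\AT_{k+1})$ of the original $\D^t_{k}\bm{\phi}$, i.e.\ the square formed by the two transposed embeddings commutes, which is exactly the assertion that \eqref{D mapping} extends $\D^t_{k}$. I expect the only real subtlety to be bookkeeping—keeping the four distinct pairings ($\dom(\AT_{k})'$, $\dom(\AT_{k+1})'$, $\Wm_{k+1}$, $\Wp_{k+2}(\AT_{k+1})'$) apart and recognizing that ``extension'' is meant through the transposed (dense) embeddings rather than as an equality of operators on one common space. The analytic content is just the single chain of inequalities above, which is routine once the graph norm of $\Wp_{k+2}(\AT_{k+1})$ is invoked.
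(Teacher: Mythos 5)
Your proposal is correct and follows essentially the same route as the paper: the paper's entire proof consists of the single estimate $\vert\langle\bm{\phi},\AT_{k+1}\z\rangle_{\Wm_{k+1}}\vert\leq\norm{\bm{\phi}}_{\Wm_{k+1}}\norm{\AT_{k+1}\z}_{\Wp_{k+1}}\leq\norm{\bm{\phi}}_{\Wm_{k+1}}\norm{\z}_{\Wp_{k+2}(\AT_{k+1})}$, which is exactly your central chain of inequalities. Your additional bookkeeping---the injectivity of the transposed dense embeddings and the commuting-square consistency check showing the map genuinely extends $\D^t_k$---is sound and merely makes explicit what the paper leaves implicit.
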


\begin{proof}
  For all $\bm{\phi}\in\Wm_{k+1}$, it follows by definition that $\forall\z\in\Wp_{k+2}(\AT_{k+1})$,
  \begin{align}\label{eq: D extension well-defined}
    \vert\langle\bm{\phi}, \AT_{k+1} \z\rangle_{\Wm_{k+1}}\vert \leq \norm{\bm{\phi}}_{\Wm_{k+1}}\norm{\AT_{k+1}\z}_{\Wp_{k+1}}
    \leq\norm{\bm{\phi}}_{\Wm_{k+1}}\norm{\z}_{\Wp_{k+2}(\AT_{k+1})}.
  \end{align}
\end{proof}	

\subsubsection{Dual decomposition}
We may also adopt the adjoint perspective. It goes without saying that the development is
completely symmetric to \Cref{sec: Primal decomposition}. We present it for completeness.

\begin{statement}\emph{(cf. Assumption \ref{B})}
  \label{C}
  \label{dual regular decomposition}
  For all $k\in\mathbb{Z}$, beside Assumption \ref{Assumption trace} we stipulate the
  following: 

  \begin{enumerate}[label=\textbf{\emph{\Roman*}}]
  \item The Hilbert spaces $\cred{\Wp_{k}}\subset\W_{k}$ are such that the inclusion maps spawn 
    continuous and dense embeddings
    \begin{align}\label{dense embedding Wplus in DA}
      \Wp_{k}\hookrightarrow\dom(\A_{k}). 
    \end{align}
  \item There exist bounded operators
    \begin{align}
      \cred{\mathsf{L}^n_{k+1}}: \dom(\A_{k+1})\rightarrow\Wp_{k+1} &&
      \text{and} &&
      \cred{\mathsf{V}^n_{k+1}}:\dom(\A_{k+1})\rightarrow \Wp_{k}
    \end{align}
    such that
    \begin{align}
      \label{decomposition of DA with operators}
      \y=(\mathsf{L}^n_{k+1}+\A_{k}\mathsf{V}^n_{k+1})\,\y&&\forall\y\in \dom(\A_{k+1}).
    \end{align}\label{assum C: regular decomposition}
  \item The Hilbert spaces
    \begin{equation}
      \cred{\Wp_{k}(\AT_{k})}:=\left\{\,\x\in\Wp_{k}\,\,\vert\,\,\A_{k}\x\in\Wp_{k+1}\,\right\},
    \end{equation}
    equipped with the graph inner product defined for all $\x_1,\x_z\in \Wp_{k}(\A_{k})$ by
    \begin{equation}
      (\x_1,\x_2)_{\Wp_{k}(\A_{k})} :=(\x_1,\x_2)_{\Wp_{k}} + (\A_{k}\x_1,\AT_{k+1}\x_2)_{\Wp_{k+1}},
    \end{equation}
    are such that the inclusions ${\Wp_{k}}\subset\W_{k}$ induce continuous and dense
    embeddings
    \begin{align}
      \Wp_{k}(\A_{k})\hookrightarrow\dom(\A_{k}). 
    \end{align}
  \end{enumerate}
\end{statement}

\begin{lem}\label{lem: extension of E to tildeYm}
  Under Assumption \ref{dual regular decomposition}, the surface operator $\D^n_{k+1}$ can be extended to a continuous mapping
  \begin{equation}\label{Dn mapping}
    \D^n_k: \left\{
      \arraycolsep=1.4pt\def\arraystretch{1.2}
      \begin{array}{rcl}
	\Wm_{k+1} &\rightarrow &\Wp_{k}(\A_{k})'\\
        \bm{\psi}&\mapsto &\langle\bm{\psi}, \A_k \cdot\,\rangle_{\Wm_{k+1}}
      \end{array}
    \right. .
  \end{equation}
\end{lem}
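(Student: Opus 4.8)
The plan is to mirror verbatim the argument of \Cref{lem: extension of D to tildeYm}, exchanging the roles of the primal and dual complexes as announced at the start of this subsection. The only structural fact needed is that, by the very definition of the graph space $\Wp_{k}(\A_{k})$, every $\x\in\Wp_{k}(\A_{k})$ satisfies $\A_k\x\in\Wp_{k+1}$; consequently the duality bracket $\langle\bm{\psi},\A_k\x\rangle_{\Wm_{k+1}}$ between $\bm{\psi}\in\Wm_{k+1}=(\Wp_{k+1})'$ and $\A_k\x\in\Wp_{k+1}$ is meaningful. This is exactly what makes the candidate functional $\langle\bm{\psi},\A_k\,\cdot\,\rangle_{\Wm_{k+1}}$ well defined on $\Wp_{k}(\A_{k})$.

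First I would fix $\bm{\psi}\in\Wm_{k+1}$ and establish, for every $\x\in\Wp_{k}(\A_{k})$, the chain of inequalities
\begin{equation*}
  \vert\langle\bm{\psi}, \A_{k}\x\rangle_{\Wm_{k+1}}\vert
  \leq \norm{\bm{\psi}}_{\Wm_{k+1}}\,\norm{\A_{k}\x}_{\Wp_{k+1}}
  \leq \norm{\bm{\psi}}_{\Wm_{k+1}}\,\norm{\x}_{\Wp_{k}(\A_{k})},
\end{equation*}
where the first step is the defining estimate of the dual norm on $\Wm_{k+1}$ and the second follows from the definition of the graph norm on $\Wp_{k}(\A_{k})$, which dominates $\norm{\A_k\x}_{\Wp_{k+1}}$. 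This is the exact analogue of \eqref{eq: D extension well-defined}. The estimate shows at once that $\langle\bm{\psi},\A_k\,\cdot\,\rangle_{\Wm_{k+1}}\in\Wp_{k}(\A_{k})'$ with norm bounded by $\norm{\bm{\psi}}_{\Wm_{k+1}}$, and hence that the assignment $\bm{\psi}\mapsto\langle\bm{\psi},\A_k\,\cdot\,\rangle_{\Wm_{k+1}}$ defines a bounded linear map $\Wm_{k+1}\rightarrow\Wp_{k}(\A_{k})'$ of operator norm at most $1$.

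Finally I would verify that this map genuinely \emph{extends} the surface operator $\D^n_{k+1}=\A_k'$ of \eqref{def: surface operator Dn}. By the first hypothesis of Assumption~\ref{dual regular decomposition}, the embedding $\Wp_{k+1}\hookrightarrow\dom(\A_{k+1})$ is dense, so the restriction of functionals $\dom(\A_{k+1})'\hookrightarrow\Wm_{k+1}$ is a continuous injective embedding; likewise the continuous inclusion $\Wp_{k}(\A_{k})\hookrightarrow\dom(\A_{k})$ yields $\dom(\A_k)'\hookrightarrow\Wp_{k}(\A_{k})'$. For $\bm{\psi}\in\dom(\A_{k+1})'\subset\Wm_{k+1}$ and $\x\in\Wp_{k}(\A_{k})\subset\dom(\A_{k})$ one then reads off $\langle\bm{\psi},\A_k\x\rangle_{\Wm_{k+1}}=\langle\bm{\psi},\A_k\x\rangle_{\dom(\A_{k+1})'}=\langle\D^n_{k+1}\bm{\psi},\x\rangle_{\dom(\A_k)'}$, so that after these identifications the new map restricts to $\D^n_{k+1}$ on $\dom(\A_{k+1})'$. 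I do not expect any genuine obstacle: the whole statement reduces to one routine boundedness estimate together with a density/restriction bookkeeping argument that is completely symmetric to the primal case, and the only point deserving a word of care is precisely the consistency check carried out in this last paragraph.
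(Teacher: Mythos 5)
Your proposal is correct and matches the paper's proof, which consists of exactly your boundedness estimate $\vert\langle\bm{\psi},\A_k\x\rangle_{\Wm_{k+1}}\vert\leq\norm{\bm{\psi}}_{\Wm_{k+1}}\norm{\A_k\x}_{\Wp_{k+1}}\leq\norm{\bm{\psi}}_{\Wm_{k+1}}\norm{\x}_{\Wp_{k}(\A_k)}$, stated ``parallel to the proof of \Cref{lem: extension of D to tildeYm}.'' Your final paragraph verifying via the dense embeddings and restriction of functionals that the map genuinely restricts to $\D^n_{k+1}$ on $\dom(\A_{k+1})'$ is a sound consistency check that the paper leaves implicit, so you have if anything been more careful than the original.
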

\begin{proof}
  Parallel to the proof of \Cref{lem: extension of D to tildeYm}, it follows by definition that given $\bm{\psi}\in\Wm_{k+1}$,
  \begin{align}\label{eq: E extension well-defined}
    \vert\langle\bm{\psi}, \A_k \x\rangle_{\Wm_{k+1}}\vert \leq \norm{\bm{\psi}}_{\Wm_{k+1}}\norm{\A_k\x}_{\Wp_{k+1}} \leq\norm{\bm{\psi}}_{\Wm_{k+1}}\norm{\x}_{\Wp_{k}(\A_k)} &&\forall\x\in\Wp_{k}(\A_k).
  \end{align}
\end{proof}	

It is not excluded that both assumptions \ref{Assumption continuous and dense embeddings AT} and \ref{dual regular decomposition} hold, in which case the inclusion
\begin{equation}\label{eq: inclusion cap AT}
  \Wp_{k+1}\hookrightarrow\dom(\AT_{k})\cap\dom(\A_{k+1})
\end{equation}
is assumed to be a dense embedding.

\begin{example}{Stable regular decompositions}\label{eg: stable regular decompositions}
  There is some freedom in choosing the spaces $\Wp_k$, $k\in\mathbb{Z}$. For the de Rham complex though, there are obvious candidates satisfying \eqref{eq: inclusion cap AT} that also satisfy both assumptions \ref{Assumption continuous and dense embeddings AT} and \ref{dual regular decomposition}: functions in the Sobolev space $\Hone$ and vector-fields with components in $\Hone$, which by Rellich's lemma are compactly embedded in the spaces $L^2(\Omega)$ and $\mathbf{L}^2(\Omega)$, respectively.
  \medskip
  \begin{center}
    \begin{tabular}{c|cccc}
      $k$          & $0$           & $1$              & $2$ &  $3$\\
      \midrule
      $\W_k$      & $L^2(\Omega)$  &$\bLtwo$    &  $\bLtwo$ & $\Ltwo$\rule{0pt}{3ex}\\
      $\Wp_k$      & $H^1(\Omega)$  &$\H^1(\Omega)$    &  $\H^1(\Omega)$ &$H^1(\Omega)$\rule{0pt}{3ex}\\
      $\dom(\A_k)$   & $H^1(\Omega)$  &$\Hcurl$          &$\Hdiv$    &$L^2(\Omega)$\rule{0pt}{3ex}\\
      $\dom(\AT_k)$  & $\Hdiv$        &$\Hcurl$          &$H^1(\Omega)$ & $\{0\}$\rule{0pt}{3ex}\\
    \end{tabular}
  \end{center}
  \bigskip

  It is well-known (cf. \cite[Sec.2]{HPE17}, \cite[Lem. 2.4]{HIP02} and \cite[Sec. 3]{HIX06}) that the graph spaces $\dom(\A_k)$ and $\dom(\AT_k)$ given in the above table admit the stable decompositions
  \begin{subequations}
    \begin{align}
      \dom(\A_2)&=\dom(\AT_0) = \Hdiv = \mathbf{H}^1(\Omega) + \mathbf{curl}\,\mathbf{H}^1(\Omega),\label{reg decomposition Hdiv}\\
      \dom(\A_1)&=\dom(\AT_1) = \Hcurl = \mathbf{H}^1(\Omega) + \mathbf{grad}\,H^1(\Omega)\label{reg decomposition Hcurl}
    \end{align}
  \end{subequations}
  These satisfy assumptions \ref{Assumption continuous and dense embeddings AT} and \ref{dual regular decomposition}. Moreover, you may recall that
  \begin{align}
    \H^1(\Omega)\hookrightarrow \Hcurl\cap\Hdiv
  \end{align}
  is a dense embedding \cite[Prop. 2.3]{ABD96}.
\end{example}

\subsection{Characterization of dual spaces}
\label{Characterization of dual spaces}
In light of \Cref{lem: extension of D to tildeYm}, the Hilbert space
\begin{equation}
  \cred{\Wm_{k+1}(\D^t_{k})}
  := \left\{\,\bm{\phi}\in\Wm_{k+1}\,\,\vert\,\,\mathsf{D}^t_k\bm{\phi}\in\Wm_{k+2}\,\right\},
\end{equation}
equipped with the graph norm $\norm{\cdot}^2_{\Wm_{k+1}(\D^t_{k})}:=\norm{\cdot}^2_{\Wm_{k+1}}+\norm{\D^t_{k}\cdot}^2_{\Wm_{k+2}}$, is well-defined under Assumption \ref{Assumption continuous and dense embeddings AT}. In this setting, observe that, if $\bm{\phi}\in\Wm_{k+1}(\D^t_{k})$, then based on the decomposition \eqref{decomposition of DAT with operators}, the evaluation
\begin{equation}\label{eq: evaluation of tildeYm at DAT}
  \bm{\phi}(\y) = \bm{\phi}(\L^t_{k+1}\y) + \bm{\phi}(\AT_{k+1}\mathsf{V}^t_{k+1}\y) = \bm{\phi}(\L^t_{k+1}\y) + \D^t_{k}\bm{\phi}(\mathsf{V}^t_{k+1}\y)
\end{equation}
is well-defined for all $\y\in\dom(\AT_{k})$ thanks to the hypothesis that guarantees $\range(\L^t_{k+1})\subset\Wp_{k+1}$ and $\range(\mathsf{V}^t_{k+1})\subset\Wp_{k+2}$.

\begin{theo}\label{thm: characterization of dual T}
  Assumption \ref{Assumption continuous and dense embeddings AT} guarantees the following
  isomorphism of normed vector spaces,  
  \begin{equation}
    \dom(\AT_k)'\cong\Wm_{k+1}(\D^t_{k}).
  \end{equation}
\end{theo}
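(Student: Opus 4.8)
The plan is to realize the claimed isomorphism concretely as the \emph{restriction-of-functionals} map dual to the dense embedding $\Wp_{k+1}\hookrightarrow\dom(\AT_k)$, and to build its inverse from the stable regular decomposition \eqref{decomposition of DAT with operators}. First I would record that Assumption \ref{Assumption continuous and dense embeddings AT} (read at index $k+1$) makes $\Wp_{k+1}\hookrightarrow\dom(\AT_k)$ a continuous dense embedding, so that its transpose $R:\dom(\AT_k)'\to\Wm_{k+1}$, $R\bm{F}:=\bm{F}|_{\Wp_{k+1}}$, is continuous and, by density, injective. The whole theorem then reduces to showing that $R$ is a topological isomorphism onto $\Wm_{k+1}(\D^t_k)$.

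Next I would check that $R$ indeed lands in $\Wm_{k+1}(\D^t_k)$. The key structural input is the complex property $\range(\AT_{k+1})\subset\dom(\AT_k)$ together with the embedding $\Wp_{k+2}\hookrightarrow\dom(\AT_{k+1})$: since $\AT_k\AT_{k+1}=0$ one has $\norm{\AT_{k+1}\z}_{\dom(\AT_k)}=\norm{\AT_{k+1}\z}_{\W_{k+1}}$, so $\AT_{k+1}:\Wp_{k+2}\to\dom(\AT_k)$ is bounded. Consequently, for $\bm{F}\in\dom(\AT_k)'$ the functional $\z\mapsto\bm{F}(\AT_{k+1}\z)$ belongs to $\Wm_{k+2}$; it agrees with $\D^t_k(R\bm{F})$ on $\Wp_{k+2}(\AT_{k+1})$, where $\AT_{k+1}\z\in\Wp_{k+1}$ and the defining formula \eqref{D mapping} of \Cref{lem: extension of D to tildeYm} applies. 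This is precisely the statement $\D^t_k(R\bm{F})\in\Wm_{k+2}$, and the same computation gives $\norm{R\bm{F}}_{\Wm_{k+1}(\D^t_k)}\le C\,\norm{\bm{F}}_{\dom(\AT_k)'}$, so $R$ is continuous into the graph space.

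For surjectivity and the inverse I would use \eqref{eq: evaluation of tildeYm at DAT} to define $S:\Wm_{k+1}(\D^t_k)\to\dom(\AT_k)'$ by $S\bm{\phi}(\y):=\bm{\phi}(\L^t_{k+1}\y)+\langle\D^t_k\bm{\phi},\mathsf{V}^t_{k+1}\y\rangle_{\Wm_{k+2}}$; boundedness of $\L^t_{k+1}$ and $\mathsf{V}^t_{k+1}$ immediately yields $\norm{S\bm{\phi}}_{\dom(\AT_k)'}\le C'\norm{\bm{\phi}}_{\Wm_{k+1}(\D^t_k)}$. Then I would verify that $R$ and $S$ are mutually inverse. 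The identity $S\circ R=\id$ is direct: evaluating $S(R\bm{F})$ at $\y$ and inserting $\y=\L^t_{k+1}\y+\AT_{k+1}\mathsf{V}^t_{k+1}\y$, together with the identification of $\D^t_k(R\bm{F})$ with $\bm{F}\circ\AT_{k+1}$ from the previous step, collapses the two terms back to $\bm{F}(\y)$. For $R\circ S=\id$ I would test $S\bm{\phi}$ against $\w\in\Wp_{k+1}$; the crucial observation is that $\AT_{k+1}\mathsf{V}^t_{k+1}\w=\w-\L^t_{k+1}\w\in\Wp_{k+1}$, so $\mathsf{V}^t_{k+1}\w\in\Wp_{k+2}(\AT_{k+1})$ and the $\Wm_{k+2}$-pairing coincides with the $\Wp_{k+2}(\AT_{k+1})'$-pairing $\bm{\phi}(\AT_{k+1}\mathsf{V}^t_{k+1}\w)$; substituting the decomposition of $\w$ then cancels the two correction terms and leaves $\bm{\phi}(\w)$. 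Two continuous mutually inverse maps give the asserted isomorphism of normed vector spaces.

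The main obstacle—really the only subtle point—is the bookkeeping around the two roles of $\D^t_k\bm{\phi}$: as the functional on $\Wp_{k+2}(\AT_{k+1})$ supplied by \Cref{lem: extension of D to tildeYm}, and as the element of $\Wm_{k+2}$ entering the graph norm. One must argue that these are compatible, i.e. that the $\Wm_{k+2}$-representative restricts to the $\Wp_{k+2}(\AT_{k+1})'$-functional, so that the two pairings may be interchanged precisely when the argument lies in $\Wp_{k+2}(\AT_{k+1})$. This compatibility is forced by the fact that $\AT_{k+1}\z$ need not lie in $\Wp_{k+1}$ for general $\z\in\Wp_{k+2}$, which is exactly why the construction must route through $\bm{F}$ defined on all of $\dom(\AT_k)$ (via $\range(\AT_{k+1})\subset\dom(\AT_k)$) rather than through $\bm{\phi}\in\Wm_{k+1}$ alone; density of $\Wp_{k+2}(\AT_{k+1})$ in $\Wp_{k+2}$ ensures that the $\Wm_{k+2}$-representative is unique, so that the graph norm defining $\Wm_{k+1}(\D^t_k)$ is unambiguous.
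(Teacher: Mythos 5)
Your proposal is correct and follows essentially the same route as the paper: both identify $\dom(\AT_k)'$ inside $\Wm_{k+1}$ via restriction along the dense embedding $\Wp_{k+1}\hookrightarrow\dom(\AT_k)$, obtain membership in the graph space $\Wm_{k+1}(\D^t_k)$ from \Cref{def: weak trace space operator} and \Cref{lem: extension of D to tildeYm}, and recover a functional on all of $\dom(\AT_k)$ from the stable decomposition via the evaluation formula \eqref{eq: evaluation of tildeYm at DAT}. The only difference is the final step: you construct the explicit bounded inverse $S$ and check $R\circ S=\id$ and $S\circ R=\id$, yielding two-sided norm estimates directly (and usefully flagging the representative-compatibility issue for $\D^t_k\bm{\phi}$), whereas the paper proves the one-sided estimate \eqref{eq: estimate tildeYm in D(AT) prime} and invokes the bounded inverse theorem for the equivalence of norms.
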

\begin{proof}
  Due to \eqref{dense embedding Yp and ZP} from Hypothesis \emph{\ref{assum B: Wplus}} of Assumption \ref{B}, the restriction of functionals $\dom(\AT_{k+1})'\hookrightarrow \Wm_{k+2}$ is a continuous embedding, so the inclusion $\dom(\AT_k)'\subset\Wm_{k+1}(\D^t_{k})$ is immediate from \Cref{def: weak trace space operator}.

  Moreover, for all $\bm{\phi}\in\Wm_{k+1}(\D^t_{k})$, we estimate using \eqref{eq:
    evaluation of tildeYm at DAT} that
  \begin{align}
    \label{eq: estimate tildeYm in D(AT) prime}
    \begin{split}
      \vert\bm{\phi}(\y)\vert&\leq \norm{\bm{\phi}}_{\Wm_{k+1}}\norm{\L^t_{k+1}\y}_{\Wp_{k+1}} + \norm{\D^t_{k}\bm{\phi}}_{\Wm_{k+2}}\norm{\mathsf{V}^t_{k+1}\y}_{\Wp_{k+2}}\\
      &\leq C (\norm{\bm{\phi}}_{\Wm_{k+1}}+\norm{\D^t_{k}\bm{\phi}}_{\Wm_{k+2}})\norm{\y}_{\dom(\AT_k)}
    \end{split}
  \end{align}
  for all $\y\in\dom(\AT_k)$, where $C>0$ is a constant of continuity related to the
  boundedness of the potential and lifting operators in hypothesis \emph{\ref{assum B:
      regular decomposition}} of Assumption \ref{Assumption continuous and dense
    embeddings AT}. We conclude that
  \begin{align}
    \Wm_{k+1}(\D^t_{k})\subset\dom(\AT_k)'.
  \end{align}

  Notice that it also follows from \eqref{eq: estimate tildeYm in D(AT) prime} that
  \begin{equation}
    \norm{\bm{\phi}}_{\dom(\AT_k)'}=\sup_{0\neq\y\in\dom(\AT)}\frac{\vert\bm{\phi}(\y)\vert}{\norm{\y}_{\dom(\AT_k)}}
    \leq C (\norm{\bm{\phi}}_{\Wm_{k+1}}+\norm{\D^t_{k}\bm{\phi}}_{\Wm_{k+2}})
    = C\norm{\bm{\phi}}_{\Wm_{k+1}(\D^t_{k})}
  \end{equation}
  for all $\bm{\phi}\in\Wm_{k+1}(\D^t_{k})$. In other words, the identity map is continuous as a mapping 
  \begin{align}
    \Wm_{k+1}(\D^t_{k})\hookrightarrow\dom(\AT_k)'.
  \end{align} 
  Appealing to the bounded inverse theorem verifies the equivalence of norms.
\end{proof}

Similarly, under Assumption \ref{dual regular decomposition}, \Cref{lem: extension of E to tildeYm} ensures that the Hilbert space
\begin{equation}
  \Wm_{k+1}(\D^n_{k+1}):= \left\{\,\bm{\psi}\in\Wm_{k+1}\,\,\vert\,\,\mathsf{D}^n_{k+1}\bm{\psi}\in\Wm_k\,\right\},
\end{equation}
equipped with the graph norm $\norm{\cdot}_{\Wm_{k+1}(\D^n_{k+1})}:=\norm{\cdot}_{\Wm_{k+1}}+\norm{\D^n_{k+1}\cdot}_{\Wm_k}$, is well-defined. We obtain the following analogous result.

\begin{theo}[cf. \Cref{thm: characterization of dual T}]\label{thm: characterization of dual N}
  Under Assumption \ref{dual regular decomposition}, we conclude the isomorphism of normed
  vector spaces
  \begin{equation}
    \dom(\A_{k+1})'\cong\Wm_{k+1}(\D^n_{k+1}).
  \end{equation}
\end{theo}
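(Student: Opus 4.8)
The plan is to mirror the proof of \Cref{thm: characterization of dual T} almost verbatim, swapping the roles of $\A$ and $\AT$ (and of $\rA_k$ and $\A_k^*$) as dictated by the symmetry between Assumptions \ref{Assumption continuous and dense embeddings AT} and \ref{dual regular decomposition}. First I would establish the set-theoretic inclusion $\dom(\A_{k+1})'\subset\Wm_{k+1}(\D^n_{k+1})$. Hypothesis \emph{I} of Assumption \ref{dual regular decomposition} furnishes the dense continuous embedding $\Wp_k\hookrightarrow\dom(\A_k)$ for every index, in particular at $k$ and at $k+1$; dualizing these (continuous with dense range implies injective continuous adjoint) yields the restriction embeddings $\dom(\A_{k+1})'\hookrightarrow\Wm_{k+1}$ and $\dom(\A_k)'\hookrightarrow\Wm_k$. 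Since $\D^n_{k+1}$ maps $\dom(\A_{k+1})'$ into $\dom(\A_k)'$ by \Cref{def: weak trace space operator}, the second embedding gives $\D^n_{k+1}\bm{\psi}\in\Wm_k$ for every $\bm{\psi}\in\dom(\A_{k+1})'$, which is exactly the membership condition defining $\Wm_{k+1}(\D^n_{k+1})$.

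For the reverse inclusion I would exploit the stable regular decomposition \eqref{decomposition of DA with operators}. Given $\bm{\psi}\in\Wm_{k+1}(\D^n_{k+1})$ and an arbitrary $\y\in\dom(\A_{k+1})$, I write $\y=\mathsf{L}^n_{k+1}\y+\A_k\mathsf{V}^n_{k+1}\y$ and split the evaluation as
\[
  \bm{\psi}(\y)=\bm{\psi}(\mathsf{L}^n_{k+1}\y)+\langle\bm{\psi},\A_k\mathsf{V}^n_{k+1}\y\rangle_{\Wm_{k+1}}=\bm{\psi}(\mathsf{L}^n_{k+1}\y)+\D^n_{k+1}\bm{\psi}(\mathsf{V}^n_{k+1}\y),
\]
where the second equality is precisely the action of the extended operator furnished by \Cref{lem: extension of E to tildeYm}. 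Here $\mathsf{L}^n_{k+1}\y\in\Wp_{k+1}$ is paired with $\bm{\psi}\in\Wm_{k+1}=(\Wp_{k+1})'$, and $\mathsf{V}^n_{k+1}\y\in\Wp_k$ is paired with $\D^n_{k+1}\bm{\psi}\in\Wm_k=(\Wp_k)'$; both terms are legitimate exactly because $\bm{\psi}$ lies in $\Wm_{k+1}(\D^n_{k+1})$.

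The estimate then reads
\[
  \vert\bm{\psi}(\y)\vert\leq\norm{\bm{\psi}}_{\Wm_{k+1}}\norm{\mathsf{L}^n_{k+1}\y}_{\Wp_{k+1}}+\norm{\D^n_{k+1}\bm{\psi}}_{\Wm_k}\norm{\mathsf{V}^n_{k+1}\y}_{\Wp_k}\leq C\,\norm{\bm{\psi}}_{\Wm_{k+1}(\D^n_{k+1})}\norm{\y}_{\dom(\A_{k+1})},
\]
with $C$ the joint bound on the lifting and potential operators $\mathsf{L}^n_{k+1},\mathsf{V}^n_{k+1}$ from Hypothesis \emph{II} of Assumption \ref{dual regular decomposition}. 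Taking the supremum over $\y$ simultaneously delivers $\Wm_{k+1}(\D^n_{k+1})\subset\dom(\A_{k+1})'$ and the continuity of the identity map $\Wm_{k+1}(\D^n_{k+1})\hookrightarrow\dom(\A_{k+1})'$. Since the two spaces now coincide as sets and both are complete, the bounded inverse theorem upgrades this to an isomorphism of normed spaces.

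I expect no genuine obstacle beyond index bookkeeping, the argument being structurally identical to \Cref{thm: characterization of dual T}. The only point deserving care is that the boundary term $\langle\bm{\psi},\A_k\mathsf{V}^n_{k+1}\y\rangle$ be evaluated through the extended surface operator \emph{on the smaller dual} $\Wm_k=(\Wp_k)'$ rather than merely on the a priori codomain $\Wp_k(\A_k)'$ of \Cref{lem: extension of E to tildeYm}; this is justified because the defining condition $\D^n_{k+1}\bm{\psi}\in\Wm_k$ makes $\D^n_{k+1}\bm{\psi}$ act continuously on all of $\Wp_k$, and $\mathsf{V}^n_{k+1}\y\in\Wp_k$ by Hypothesis \emph{II}.
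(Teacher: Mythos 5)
Your proposal is correct and takes essentially the same route as the paper: the paper omits the proof of this theorem, referring by symmetry to \Cref{thm: characterization of dual T}, and your argument is a faithful index-swapped transcription of that proof (restriction of functionals via Hypothesis \emph{I}, the split evaluation of $\bm{\psi}$ through the regular decomposition \eqref{decomposition of DA with operators} and the extension from \Cref{lem: extension of E to tildeYm}, the two-term estimate, and the bounded inverse theorem). Your closing caveat---that $\D^n_{k+1}\bm{\psi}$ must be read as an element of $\Wm_k$ rather than merely of $\Wp_{k}(\A_{k})'$, which is exactly what the graph-space condition provides---is the same point the paper's primal proof relies on.
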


\begin{example}{Characterization of dual spaces}\label{eg. Characterization of dual spaces}
  Now, we specialize the theoretical results of \Cref{Characterization of dual spaces} to the 3D de Rham setting using the table in example \ref{eg: stable regular decompositions}. We obtain the following characterization of the dual spaces:
  \begin{subequations}
    \begin{align}
      \Hcurl'&=\dom(\A_1)'=\dom(\AT_1)' \cong \left\{\,\bm{\phi}\in \widetilde{\H}^{-1}(\Omega)\,\,\vert\,\,\mathbf{grad}'\,\bm{\phi}\in\widetilde{H}^{-1}(\Omega)\right\},\\
      \Hdiv' &=\dom(\A_2)'\,=\dom(\AT_0)'\cong \left\{\,\bm{\phi}\in \widetilde{\H}^{-1}(\Omega)\,\,\vert\,\,\mathbf{curl}'\,\bm{\phi}\in \widetilde{\mathbf{H}}^{-1}(\Omega)\right\}.
    \end{align}
  \end{subequations}

  Note that these characterizations are interesting in their own right. They do not depend on the theory of traces developed in the previous sections. The take-home message from the de Rham settings \ref{eg: stable regular decompositions} and \ref{eg. Characterization of dual spaces} is that via the decompositions \eqref{reg decomposition Hdiv} and \eqref{reg decomposition Hcurl}, the dual spaces of $\Hcurl$ and $\Hdiv$ can be characterized using more regular spaces such as $H^1(\Omega)$ and $\mathbf{H}^1(\Omega)$.
\end{example}

\subsection{Characterization of trace spaces}
\label{sec: Characterization of trace spaces} 
We have almost reached characterizations of the ranges of the Hilbert traces
$\range(\trace_k)$ and $\range(\dualtrace_k)$ in terms of the spaces of ``extra
regularity'' provided by Assumptions \ref{Assumption continuous and dense embeddings
  AT} and \ref{dual regular decomposition}. To achieve these new characterizations, we
introduce the following spaces for all $k\in\mathbb{Z}$:
\begin{align}
  \cred{\mathring{\W}^{n,+}_k:=\Wp_{k}\cap\dom(\A^*_{k-1})},\label{def of not t space}
  &&\text{and} &&
  \cred{\mathring{\W}^{t,+}_k:=\Wp_{k}\cap\dom(\rA_{k})}.
\end{align}
Notice that by propositions \ref{lem: kernel of n trace} and \ref{lem: kernel of trace}, we have
\begin{align}
  \mathring{\W}^{n,+}_k=\Wp_k\cap\null(\dualtrace_{k-1}),
  &&\text{and}
  &&\mathring{\W}^{t,+}_k=\Wp_k\cap\null(\trace_k),
\end{align}
respectively. 

\begin{statement}
  \label{density if Wnnot spaces}
  \label{D}
  Suppose that Assumption \ref{Assumption continuous and dense embeddings AT} holds. For
  all $k\in\mathbb{Z}$, we make the hypothesis that the inclusion map
  $\W^{+}_k\subset\dom(\AT_{k-1})$ spawns a \emph{continuous} and \emph{dense} embedding
  \begin{align}\label{eq: embedding of circ t in DA*}
    \mathring{\W}^{n,+}_{k}\hookrightarrow\dom(\A_{k-1}^*).
  \end{align}
\end{statement}

The next result involves the annihilator
\begin{equation}
  \cred{(\mathring{\W}^{n,+}_{k+1})^{\circ}} :=\left\{\bm{\phi}\in\Wm_{k+1} \,\,\vert\,\,\langle\bm{\phi},\y\rangle_{\Wm_{k+1}}=0,\,\forall\y\in\mathring{\W}^{n,+}_{k+1}\right\}.
\end{equation}

\thmbox{%
\begin{theo}\label{thm: characterization of range Tt}
  Taking for granted Assumption \ref{density if Wnnot spaces} we obtain the characterization
  \begin{equation}
    \label{eq: range of trace is characterizaton}
    \range(\trace_{k}) = \Wm_{k+1}(\D^t_{k})\cap (\mathring{\W}^{n,+}_{k+1})^{\circ}=\left\{\,\bm{\psi}\in(\mathring{\W}^{n,+}_{k+1})^{\circ}\,\,\vert\,\,\mathsf{D}^t_k\bm{\psi}\in(\mathring{\W}^{n,+}_{k+2})^{\circ}\,\right\},
  \end{equation}
  in the sense of equality of functionals in $\Wm_{k+1}$ and with equivalent norms. 
\end{theo}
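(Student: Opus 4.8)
The plan is to combine the abstract identification of $\range(\trace_{k})$ as an annihilator, established in \Cref{prop: range is annihilator}, with the concrete description of $\dom(\AT_k)'$ furnished by \Cref{thm: characterization of dual T}, and then to transfer the annihilator condition from $\dom(\A_k^*)$ onto the regular subspace $\mathring{\W}^{n,+}_{k+1}$ by means of the density hypothesis in Assumption~\ref{density if Wnnot spaces}. The second equality I would then deduce from the commuting relation of \Cref{lem: commutative relations}.

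\textbf{Step 1 (first equality).} Recall that $\range(\trace_k)=\dom(\A_k^*)^{\circ}$ with the annihilator taken in $\dom(\AT_k)'$, and that by \Cref{thm: characterization of dual T} the restriction of functionals to $\Wp_{k+1}$ realizes an isomorphism $\dom(\AT_k)'\cong\Wm_{k+1}(\D^t_{k})$ with equivalent norms. I would fix $\psi\in\dom(\AT_k)'$ and set $\bm{\phi}:=\psi|_{\Wp_{k+1}}\in\Wm_{k+1}$, so that $\psi(\w)=\langle\bm{\phi},\w\rangle_{\Wm_{k+1}}$ for every $\w\in\Wp_{k+1}$, and prove that $\psi\in\dom(\A_k^*)^{\circ}$ if and only if $\bm{\phi}\in(\mathring{\W}^{n,+}_{k+1})^{\circ}$. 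The forward implication is immediate, since $\mathring{\W}^{n,+}_{k+1}=\Wp_{k+1}\cap\dom(\A_k^*)\subset\dom(\A_k^*)$, so vanishing of $\psi$ on $\dom(\A_k^*)$ forces $\langle\bm{\phi},\w\rangle_{\Wm_{k+1}}=\psi(\w)=0$ for all $\w\in\mathring{\W}^{n,+}_{k+1}$. For the converse, the key observation is that $\A_k^*\subset\AT_k$ makes $\dom(\A_k^*)$ a \emph{closed} subspace of $\dom(\AT_k)$ carrying exactly the same graph norm (because $\A_k^*\y=\AT_k\y$ on $\dom(\A_k^*)$); hence $\psi$ restricts to a continuous functional on $\dom(\A_k^*)$. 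Since Assumption~\ref{density if Wnnot spaces} makes $\mathring{\W}^{n,+}_{k+1}\hookrightarrow\dom(\A_k^*)$ dense, vanishing of $\psi$ on $\mathring{\W}^{n,+}_{k+1}$ propagates by continuity to all of $\dom(\A_k^*)$. This yields $\range(\trace_k)=\Wm_{k+1}(\D^t_{k})\cap(\mathring{\W}^{n,+}_{k+1})^{\circ}$, and the equivalence of norms is inherited from \Cref{theo:trchar1} and \Cref{thm: characterization of dual T}, as $(\mathring{\W}^{n,+}_{k+1})^{\circ}$ is closed in $\Wm_{k+1}$.

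\textbf{Step 2 (second equality).} The inclusion $\supseteq$ is trivial, because $(\mathring{\W}^{n,+}_{k+2})^{\circ}\subset\Wm_{k+2}$ by definition, so any $\bm{\psi}\in(\mathring{\W}^{n,+}_{k+1})^{\circ}$ with $\D^t_k\bm{\psi}\in(\mathring{\W}^{n,+}_{k+2})^{\circ}$ already lies in $\Wm_{k+1}(\D^t_k)\cap(\mathring{\W}^{n,+}_{k+1})^{\circ}$. For $\subseteq$, take $\bm{\psi}\in\range(\trace_k)$ by the first equality. Then \eqref{eq: Dt maps trace space to trace space} gives $\D^t_k\bm{\psi}\in\range(\trace_{k+1})=\dom(\A_{k+1}^*)^{\circ}$, the annihilator now in $\dom(\AT_{k+1})'$. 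Applying the forward implication of Step~1 at index $k+1$ — which needs only $\mathring{\W}^{n,+}_{k+2}\subset\dom(\A_{k+1}^*)$ and the agreement of pairings on $\Wp_{k+2}$, not the density — produces $\D^t_k\bm{\psi}\in(\mathring{\W}^{n,+}_{k+2})^{\circ}$, and together with $\bm{\psi}\in(\mathring{\W}^{n,+}_{k+1})^{\circ}$ this places $\bm{\psi}$ in the right-hand set.

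\textbf{Anticipated difficulty.} The annihilator-and-density bookkeeping is routine; the delicate point throughout is keeping the two duality pairings straight — the intrinsic pairing on $\dom(\AT_k)'$ against the pairing on $\Wm_{k+1}=(\Wp_{k+1})'$ — and verifying that they agree on $\Wp_{k+1}$, which is precisely what makes the restriction map of \Cref{thm: characterization of dual T} the right identification. The main obstacle I expect lies in Step~2: one must check that the surface operator $\D^t_k$ of \Cref{def: weak trace space operator} on $\dom(\AT_{k+1})'$ is consistent with its extension in \Cref{lem: extension of D to tildeYm}, so that the restriction to $\Wp_{k+2}$ of $\D^t_k\bm{\psi}\in\dom(\AT_{k+1})'$ is exactly its representative in $\Wm_{k+2}$; only then does membership in $\dom(\A_{k+1}^*)^{\circ}$ translate into the annihilator condition against $\mathring{\W}^{n,+}_{k+2}$ stated purely in $\Wm_{k+2}$.
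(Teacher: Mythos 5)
Your proposal is correct and takes essentially the same route as the paper's proof: both identify $\range(\trace_k)=\dom(\A_k^*)^{\circ}$ via \Cref{prop: range is annihilator}, pass to $\Wm_{k+1}(\D^t_{k})$ through \Cref{thm: characterization of dual T}, transfer the annihilator condition between $\dom(\A_k^*)$ and $\mathring{\W}^{n,+}_{k+1}$ using the inclusion (trivial direction) and the density from Assumption~\ref{density if Wnnot spaces} (converse direction), and settle the second equality with \eqref{eq: Dt maps trace space to trace space}; your appeal to the isomorphism for the norm equivalence replaces the paper's explicit estimates but carries the same content. The consistency of $\D^t_k$ with its extension from \Cref{lem: extension of D to tildeYm}, which you rightly flag as the delicate point in Step~2, does hold, since the density of $\Wp_{k+2}$ in $\dom(\AT_{k+1})$ guaranteed by Assumption~\ref{Assumption continuous and dense embeddings AT} makes the restriction of functionals injective and the two pairings agree on $\Wp_{k+2}(\AT_{k+1})$.
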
}

\begin{proof}
  We already know by \Cref{prop: range is annihilator} that
  $\range(\trace_k)=\dom(\A_{k}^*)^{\circ}$.  To verify the equality on the right, recall
  that
  $\mathsf{D}^t_k(\range(\trace_k))\subset\range(\trace_{k+1})=\dom(\A^*_{k+1})^{\circ}$.

  ``$\subset$'': On the one hand, since $\dom(\A_{k}^*)^{\circ}\subset\dom(\AT_{k})'$, it
  follows immediately from \Cref{thm: characterization of dual T} and \eqref{eq: embedding
    of circ t in DA*} that $\range(\trace_{k})\subset\Wm_{k+1}(\D^t_{k})$. Moreover, as
  $\mathring{\W}^{n,+}_{k+1}\subset\dom(\A_{k}^*)$, any functional in the annihilator of
  $\dom(\A_k^*)$ will, in particular, vanish on $\mathring{\W}^{n,+}_{k+1}$, which implies
  $\dom(\A_{k}^*)^{\circ} \subset (\mathring{\W}^{n,+}_{k+1})^{\circ}$.

  Thanks to the continuous embedding of Assumption \ref{B}\ref{assum B: Wplus}
  and \eqref{D51a} from the definition of the operator $\D^{t}_{k}$, we find for every
  $\bm{\varphi}\in\dom(\AT_{k})'$:
  \begin{align*}
    \norm{\bm{\varphi}}_{\Wm_{k+1}} + \norm{\D^{t}_{k}\bm{\varphi}}_{\Wm_{k+2}} & =
    \sup\limits_{\w\in\Wp_{k+1}}\frac{|\bm{\varphi}(\w)|}{\norm{\w}_{\Wp_{k+1}}} +
    \sup\limits_{\w\in\Wp_{k+2}}\frac{|\bm{\varphi}(\AT_{k+1}\w)|}{\norm{\w}_{\Wp_{k+2}}} \\
    & \leq C
    \sup\limits_{\w\in\dom(\AT_{k})}\frac{|\bm{\varphi}(\w)|}{\norm{\w}_{\dom(\AT_{k})}} +
    \sup\limits_{\w\in\dom(\AT_{k+1})}\frac{|\bm{\varphi}(\AT_{k+1}\w)|}{\norm{\w}_{\dom(\AT_{k+1})}}
    \leq 2C \norm{\bm{\varphi}}_{\dom(\AT_{k})'},
  \end{align*}
  for some constant $C>0$ independent of $\bm{\varphi}$.

  ``$\supset$'': On the other hand, it also follows by \Cref{thm: characterization of dual
    T} that any $\bm{\phi}\in \Wm_{k+1}(\D^t_{k})\cap (\mathring{\W}^{n,+}_{k+1})^{\circ}$
  is a continuous functional in $\dom(\AT_{k})'$ vanishing on
  $\mathring{\W}^{n,+}_{k+1}$. By Assumption~\ref{D} $\mathring{\W}^{n,+}_{k+1}$ is
  densely embedded in $\dom(\A_{k}^*)$. Thus, $\bm{\phi}$ must also vanish on
  $\dom(\A_{k}^*)$ by continuity. We conclude that the inclusion
  $\Wm_{k+1}(\D^t_{k})\cap
  (\mathring{\W}^{n,+}_{k+1})^{\circ}\subset\range(\trace_{k})=\dom(\A_{k}^*)^{\circ}$
  holds.

  Finally, the estimate \eqref{eq: estimate tildeYm in D(AT) prime} gives us
  \begin{align*}
    \norm{\bm{\phi}}_{\dom(\AT_{k})'} \leq C (\norm{\bm{\phi}}_{\Wm_{k+1}}+\norm{\D^t_{k}\bm{\phi}}_{\Wm_{k+2}})
  \end{align*}
  with $C>0$ independent of $\bm{\phi}$.
\end{proof}

Of course, there is a symmetric statement on the dual side. 
\begin{statement}\emph{(cf. Assumption~\ref{D})}
  \label{E}
  \label{density if Wtnot spaces}
  Suppose that Assumption \ref{dual regular decomposition} holds. For all $k\in\mathbb{Z}$, we make the hypothesis that the inclusion map $\W^{+}_k\subset\dom(\A_{k})$ spawns a continuous and dense embedding
  \begin{align}
    \mathring{\W}^{t,+}_{k}\hookrightarrow\dom(\rA_{k}).
  \end{align}
\end{statement}

\thmbox{%
  \begin{theo}[cf. \Cref{thm: characterization of range Tt}]
    \label{thm: characterization of range Tn}
  Under Assumption \ref{density if Wtnot spaces} we have equality in $\Wm_{k+1}$ with
  equivalent norms, 
  \begin{equation}
    \label{eq: range of trace is characterizaton dual}
    \range(\dualtrace_{k+1}) =
    \Wm_{k+1}(\D^n_{k+1})\cap (\mathring{\W}^{t,+}_{k+1})^{\circ}=
    \left\{\,\bm{\psi}\in(\mathring{\W}^{t,+}_{k+1})^{\circ}\,\,\vert\,\,\mathsf{D}^n_{k+1}
      \bm{\psi}\in(\mathring{\W}^{t,+}_{k})^{\circ}\,\right\}.
  \end{equation}
\end{theo}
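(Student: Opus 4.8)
The plan is to run the proof of \Cref{thm: characterization of range Tt} essentially verbatim, interchanging the primal and dual roles: I would replace $\trace_k$ by $\dualtrace_{k+1}$, $\dom(\A_k^*)$ by $\dom(\rA_{k+1})$, the surface operator $\D^t_k$ by $\D^n_{k+1}$, \Cref{thm: characterization of dual T} by \Cref{thm: characterization of dual N}, and Assumption~\ref{density if Wnnot spaces} by Assumption~\ref{density if Wtnot spaces}. First I would record the starting identity $\range(\dualtrace_{k+1})=\dom(\rA_{k+1})^{\circ}$ from \Cref{prop: range of n is annihilator}. Next I would invoke \Cref{lem: commutative relations} in the form $-\D^n_{k+1}\circ\dualtrace_{k+1}=\dualtrace_{k}\circ\AT_{k+1}$, which yields $\D^n_{k+1}\bigl(\range(\dualtrace_{k+1})\bigr)\subset\range(\dualtrace_{k})=\dom(\rA_{k})^{\circ}\subset(\mathring{\W}^{t,+}_{k})^{\circ}$, the final inclusion holding because $\mathring{\W}^{t,+}_{k}=\Wp_{k}\cap\dom(\rA_{k})\subset\dom(\rA_{k})$. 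This containment is exactly what upgrades the intersection $\Wm_{k+1}(\D^n_{k+1})\cap(\mathring{\W}^{t,+}_{k+1})^{\circ}$ to the fully explicit set on the right of \eqref{eq: range of trace is characterizaton dual}, since for $\bm\psi\in\range(\dualtrace_{k+1})$ the condition $\D^n_{k+1}\bm\psi\in(\mathring{\W}^{t,+}_{k})^{\circ}$ is then automatic.

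I would then establish the two set inclusions. For ``$\subset$'', since $\dom(\rA_{k+1})^{\circ}\subset\dom(\A_{k+1})'$, \Cref{thm: characterization of dual N} identifies these functionals with elements of $\Wm_{k+1}(\D^n_{k+1})$, giving $\range(\dualtrace_{k+1})\subset\Wm_{k+1}(\D^n_{k+1})$; and because $\mathring{\W}^{t,+}_{k+1}\subset\dom(\rA_{k+1})$, every functional annihilating $\dom(\rA_{k+1})$ annihilates $\mathring{\W}^{t,+}_{k+1}$, so $\dom(\rA_{k+1})^{\circ}\subset(\mathring{\W}^{t,+}_{k+1})^{\circ}$. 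For ``$\supset$'', I would take $\bm\psi\in\Wm_{k+1}(\D^n_{k+1})\cap(\mathring{\W}^{t,+}_{k+1})^{\circ}$; by \Cref{thm: characterization of dual N} it defines a continuous functional in $\dom(\A_{k+1})'$ that vanishes on $\mathring{\W}^{t,+}_{k+1}$, and since Assumption~\ref{density if Wtnot spaces} makes $\mathring{\W}^{t,+}_{k+1}\hookrightarrow\dom(\rA_{k+1})$ a dense embedding, continuity forces $\bm\psi$ to vanish on all of $\dom(\rA_{k+1})$, i.e. $\bm\psi\in\dom(\rA_{k+1})^{\circ}=\range(\dualtrace_{k+1})$.

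Finally, I would establish the equivalence of norms exactly as in the primal case: the bound $\norm{\bm\psi}_{\dom(\A_{k+1})'}\leq C\bigl(\norm{\bm\psi}_{\Wm_{k+1}}+\norm{\D^n_{k+1}\bm\psi}_{\Wm_{k}}\bigr)$ follows from the dual analog of estimate \eqref{eq: estimate tildeYm in D(AT) prime}, obtained by plugging the stable regular decomposition \eqref{decomposition of DA with operators} of Assumption~\ref{dual regular decomposition} into $\bm\psi(\y)=\bm\psi(\mathsf{L}^n_{k+1}\y)+\D^n_{k+1}\bm\psi(\mathsf{V}^n_{k+1}\y)$, while the reverse continuity comes from the continuous embedding $\Wp_{k+1}\hookrightarrow\dom(\A_{k})$ of Assumption~\ref{dual regular decomposition}; the bounded inverse theorem then gives equivalence. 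The only real ``obstacle'' is bookkeeping rather than analysis: one must keep the degree shifts straight, noting that the dual trace $\dualtrace_{k+1}$ sits one step higher and that $\D^n_{k+1}$ lowers the degree back to $k$, and in particular verify that the density hypothesis of Assumption~\ref{density if Wtnot spaces} is applied at level $k+1$. No new ingredient beyond those used for \Cref{thm: characterization of range Tt} is required.
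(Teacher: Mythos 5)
Your proposal is correct and is exactly the paper's intended argument: the paper states \Cref{thm: characterization of range Tn} without its own proof precisely because it follows from the proof of \Cref{thm: characterization of range Tt} by the role swap you describe ($\trace_{k}\leftrightarrow\dualtrace_{k+1}$, $\dom(\A_k^*)\leftrightarrow\dom(\rA_{k+1})$, $\D^t_k\leftrightarrow\D^n_{k+1}$, \Cref{thm: characterization of dual T}$\,\leftrightarrow\,$\Cref{thm: characterization of dual N}, Assumption~\ref{D}$\,\leftrightarrow\,$Assumption~\ref{E}), including the starting identity from \Cref{prop: range of n is annihilator} and the use of \Cref{lem: commutative relations} to get $\D^n_{k+1}(\range(\dualtrace_{k+1}))\subset\dom(\rA_k)^{\circ}\subset(\mathring{\W}^{t,+}_{k})^{\circ}$. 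One bookkeeping slip only: the embedding you invoke for the reverse norm bound should read $\Wp_{k+1}\hookrightarrow\dom(\A_{k+1})$ (together with $\Wp_{k}\hookrightarrow\dom(\A_{k})$), not $\Wp_{k+1}\hookrightarrow\dom(\A_{k})$, which does not even type-check since $\Wp_{k+1}\subset\W_{k+1}$ while $\dom(\A_k)\subset\W_k$.
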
}

\begin{example}{Characterization of trace spaces}\label{Characterization of trace spaces}
  We specialize the theoretical results of \Cref{sec: Characterization of trace spaces} to the 3D de Rham setting.
  \medskip
  \begin{center}
    \renewcommand{\arraystretch}{1.2}
    \begin{tabular}{c|cccc}
      $k$          & $0$           & $1$              & $2$ &  $3$\rule{0pt}{3ex}\\
      \midrule
      $\W_k$      & $L^2(\Omega)$  &$\bLtwo$    &  $\bLtwo$ & $\Ltwo$\rule{0pt}{3ex}\\
      $\Wp_k$      & $H^1(\Omega)$  &$\H^1(\Omega)$    &  $\H^1(\Omega)$ &$H^1(\Omega)$\\
      $\mathring{\W}^{t,+}_k$      & $\Honenot$  &$\H^1(\Omega)\cap\Hcurlnot$    &  $\H^1(\Omega)\cap\Hdivnot$ & $\Honenot$\rule{0pt}{3ex}\\
      $\mathring{\W}^{n,+}_k$      & $\Honenot$  &$\H^1(\Omega)\cap\Hdivnot$    &  $\H^1(\Omega)\cap\Hcurlnot$ &$\Honenot$\rule{0pt}{3ex}\\
    \end{tabular}
  \end{center}
  \bigskip

  Loosely speaking, theorems \ref{thm: characterization of range Tt} and \ref{thm:
    characterization of range Tn} state that the range of the Hilbert trace is a subspace
  of functionals in the dual of a regular space $\Wp_k$ whose image under the
  corresponding surface operator also lies in the dual of $\Wp_{k+1}$. Linear functionals
  in that subspace vanish on a dense subset of the dual trace's kernel:
  \begin{subequations}
    \begin{align}
      \range(\trace_{\mathbf{curl}})&=\range(\dualtrace_{\mathbf{curl}}) = \left\{\,\bm{\phi}\in \widetilde{\H}^{-1}(\Omega)\cap\Hcurlnot^{\circ}\,\,\vert\,\,\mathbf{grad}'\,\bm{\phi}\in\widetilde{H}^{-1}(\Omega)\cap\Honenot^{\circ}\right\},\label{eq: charact dualtrace curl}\\
      \range(\trace_{\mathbf{grad}})&=\range(\dualtrace_{\text{div}}) \,\,\,= \left\{\,\bm{\phi}\in \widetilde{\H}^{-1}(\Omega)\cap\Hdivnot^{\circ}\,\,\vert\,\,\mathbf{curl}'\,\bm{\phi}\in \widetilde{\mathbf{H}}^{-1}(\Omega)\cap\Hcurlnot^{\circ}\right\}.\label{eq: charact dualtrace div}
    \end{align}
  \end{subequations}

  One thing immediately apparent is that
  $\range(\dualtrace_{\mathbf{curl}})=\range(\trace_{\mathbf{curl}})$ and
  $\range(\dualtrace_{\text{div}})=\range(\trace_{\mathbf{grad}})$, which is expected
  because we already know from previous sections that
  \begin{subequations}
    \begin{gather}
      \range(\dualtrace_{\mathbf{curl}}) = \dom(\rA_1)^{\circ}
      = \Hcurlnot' = \dom(\A^*_1)^{\circ} = \range(\trace_{\mathbf{curl}}),\\
      \range(\dualtrace_{\text{div}}) = \dom(\rA_2)^{\circ}
      = \Hdivnot' = \dom(\A^*_1) =\range(\trace_{\mathbf{grad}}).
    \end{gather}
  \end{subequations}

  Before we compare these characterizations with \eqref{def:minushalf trace space} and \eqref{def:minushalf curl trace space}, we want to reformulate them in terms of quotient spaces in the next section.
\end{example}

\subsection{Characterization of trace spaces in quotient spaces}
\label{Characterization of trace spaces based on quotient spaces} 
We can reformulate the characterizations of \Cref{sec: Characterization of trace spaces}
in terms of quotient spaces. To proceed, let us set
\begin{subequations}
  \begin{align}
    \cred{\mathbf{T}^{t,+}_k}&:=\Wp_{k}/\mathring{\W}^{t,+}_{k},  & \cred{\mathbf{T}^{t,-}_k} &:= \left(\mathbf{T}^{t,+}_k\right)',\label{eq: def of Tt}\\
    \cred{\mathbf{T}^{n,+}_k}&:=\Wp_{k}/\mathring{\W}^{n,+}_{k},  & \cred{\mathbf{T}^{n,-}_k} &:= \left(\mathbf{T}^{n,+}_k\right)'.\label{eq: def of Tn}
  \end{align}
\end{subequations}

Under Assumption \ref{density if Wnnot spaces} (resp. \ref{density if Wtnot spaces}), it follows by definition of the space $\mr{\W}^{n,+}_{k}$ (resp. $\mr{\W}^{t,+}_k$) that the dense embedding $\Wp_k\hookrightarrow\dom(\AT_{k-1})$ (resp. $\Wp_k\hookrightarrow\dom(\A_{k})$) induces a well-defined and dense embedding
\begin{align}
  \left\{
    \arraycolsep=1.4pt\def\arraystretch{1.2}
    \begin{array}{rcl}
      \mathbf{T}^{n,+}_{k} &\hookrightarrow &\mathcal{T}(\AT_{k-1})\\
      {[}\x{]}&\mapsto &\bm{\pi}^n_{k-1}\x
    \end{array}
  \right. 
  &&\text{\Bigg(resp.}
  &&
  \left\{
    \arraycolsep=1.4pt\def\arraystretch{1.2}
    \begin{array}{rcl}
      \mathbf{T}_{k}^{t,+} &\hookrightarrow &\mathcal{T}(\A_k)\\
      {[}\x{]}&\mapsto &\bm{\pi}^t_k\x
    \end{array}
  \right.\text{\quad\Bigg)}
\end{align}
on the quotient spaces. Accordingly, the associated restriction of functionals
\begin{align}\label{restriction of functionals quotient spaces}
  \left\{
    \arraycolsep=1.4pt\def\arraystretch{1.2}
    \begin{array}{rcl}
      \mathcal{T}(\AT_{k-1})' &\hookrightarrow &\mathbf{T}_{k}^{n,-}\\
      \bm{\psi}&\mapsto &\left\{\,[\x]\mapsto\bm{\psi}(\bm{\pi}^n_{k-1}\x)\,\right\}
    \end{array}
  \right. 
  &&\text{\Bigg(resp.}
  &&
  \left\{
    \arraycolsep=1.4pt\def\arraystretch{1.2}
    \begin{array}{rcl}
      \mathcal{T}(\A_k)' &\hookrightarrow &\mathbf{T}_{k}^{t,-}\\
      \bm{\phi}&\mapsto &\left\{\,[\x]\mapsto\bm{\phi}(\bm{\pi}^t_k\x)\,\right\}
    \end{array}
  \right. \text{\quad\Bigg)}
\end{align}
is also well-defined and gives rise to dense embeddings.

In the next lemma, we make explicit the mappings induced on the quotient spaces by restricting the operators $\AT_{k-1}$ and $\A_{k}$ to $\Wp_{k}$. Those are the restrictions of the surface operators $\mathsf{S}^n_{k-1}$ and $\mathsf{S}^t_k$ to $\mathbf{T}^{n,+}_{k}$ and $\mathbf{T}^{t,+}_{k}$, respectively; cf. \Cref{def: trace space operator}.

\begin{lem}
  Assumptions \ref{density if Wnnot spaces} and \ref{density if Wtnot spaces} imply that the mappings
  \begin{align}\label{eq: def of the S operators in lemma}
    \cred{\hat{\mathsf{S}}^n_{k+1}}: \left\{
      \arraycolsep=1.4pt\def\arraystretch{1.2}
      \begin{array}{rcl}
	\mathbf{T}^{n,+}_{k+2} &\rightarrow &\mathcal{T}(\AT_{k})\\
        {[}\z{]}&\mapsto &\bm{\pi}^n_{k}\AT_{k+1}\z
      \end{array}
    \right.
    &&\text{and}
    &&			\cred{\hat{\mathsf{S}}^t_k}: \left\{
      \arraycolsep=1.4pt\def\arraystretch{1.2}
      \begin{array}{rcl}
	\mathbf{T}^{t,+}_k &\rightarrow &\mathcal{T}(\A_{k+1})\\
        {[}\x{]}&\mapsto &\bm{\pi}^t_{k+1}\A_k\x
      \end{array}
    \right., 
  \end{align}
  respectively, are well-defined and continuous.
\end{lem}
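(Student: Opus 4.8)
The plan is to realize each of the two maps as a composition of objects whose good behaviour has already been established, namely the quotient-space embeddings constructed in the paragraph preceding the lemma and the quotient surface operators $\S^t_k$, $\S^n_{k+1}$ of \Cref{def: trace space operator}. I would treat $\hat{\mathsf{S}}^t_k$ in full and then invoke the symmetric argument (swapping the roles of $\A$ and $\AT$, and of $\rA$ and $\A^*$) for $\hat{\mathsf{S}}^n_{k+1}$. The core observation is that $\S^t_k(\bm{\pi}^t_k\x)=\bm{\pi}^t_{k+1}\A_k\x$, so $\hat{\mathsf{S}}^t_k$ is exactly $\S^t_k$ precomposed with the embedding $\mathbf{T}^{t,+}_k\hookrightarrow\mathcal{T}(\A_k)$, $[\x]\mapsto\bm{\pi}^t_k\x$, and likewise $\hat{\mathsf{S}}^n_{k+1}=\S^n_{k+1}$ precomposed with $\mathbf{T}^{n,+}_{k+2}\hookrightarrow\mathcal{T}(\AT_{k+1})$, $[\z]\mapsto\bm{\pi}^n_{k+1}\z$.

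First I would check that $\x\mapsto\bm{\pi}^t_{k+1}\A_k\x$ is meaningful on $\Wp_k$: under Assumption~\ref{density if Wtnot spaces} the embedding $\Wp_k\hookrightarrow\dom(\A_k)$ supplied by Assumption~\ref{dual regular decomposition} gives $\x\in\dom(\A_k)$, and the complex property $\range(\A_k)\subset\dom(\A_{k+1})$ yields $\A_k\x\in\dom(\A_{k+1})$, so that $\bm{\pi}^t_{k+1}\A_k\x\in\mathcal{T}(\A_{k+1})$. To see that the assignment descends to $\mathbf{T}^{t,+}_k=\Wp_k/\mathring{\W}^{t,+}_k$, I would verify that $\mathring{\W}^{t,+}_k$ maps into the kernel of $\bm{\pi}^t_{k+1}$, which is $\dom(\rA_{k+1})$: for $\x_\circ\in\mathring{\W}^{t,+}_k=\Wp_k\cap\dom(\rA_k)$ one has $\A_k\x_\circ=\rA_k\x_\circ$ since $\rA_k\subset\A_k$, and then $\rA_k\x_\circ\in\range(\rA_k)\subset\dom(\rA_{k+1})$ by the complex property of the $\rA$-complex, whence $\bm{\pi}^t_{k+1}\A_k\x_\circ=0$. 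This is the only genuinely non-formal point, and I expect it to be the main (mild) obstacle.

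For continuity I would fix $\x\in\Wp_k$ and exploit $\bm{\pi}^t_{k+1}\A_k\x_\circ=0$ for all $\x_\circ\in\mathring{\W}^{t,+}_k$, together with: $\bm{\pi}^t_{k+1}$ being a contraction; the identity $\norm{\A_k\cdot}_{\dom(\A_{k+1})}=\norm{\A_k\cdot}_{\W_{k+1}}$ (because $\A_{k+1}\A_k=0$); and $\norm{\A_k\cdot}_{\W_{k+1}}\leq\norm{\cdot}_{\dom(\A_k)}\leq C\norm{\cdot}_{\Wp_k}$, where $C$ is the norm of the embedding $\Wp_k\hookrightarrow\dom(\A_k)$. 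This chain gives
\begin{equation}
  \norm{\hat{\mathsf{S}}^t_k[\x]}_{\mathcal{T}(\A_{k+1})}=\norm{\bm{\pi}^t_{k+1}\A_k(\x-\x_\circ)}_{\mathcal{T}(\A_{k+1})}\leq C\,\norm{\x-\x_\circ}_{\Wp_k}\qquad\forall\x_\circ\in\mathring{\W}^{t,+}_k,
\end{equation}
and passing to the infimum over $\x_\circ$ produces $\norm{\hat{\mathsf{S}}^t_k[\x]}_{\mathcal{T}(\A_{k+1})}\leq C\norm{[\x]}_{\mathbf{T}^{t,+}_k}$, the desired boundedness; alternatively, boundedness is immediate from the factorization $\hat{\mathsf{S}}^t_k=\S^t_k\circ(\text{embedding})$ as a composition of bounded maps.

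The dual statement for $\hat{\mathsf{S}}^n_{k+1}:\mathbf{T}^{n,+}_{k+2}\to\mathcal{T}(\AT_k)$ runs in exactly the same way under Assumption~\ref{density if Wnnot spaces}: the embedding $\Wp_{k+2}\hookrightarrow\dom(\AT_{k+1})$ of Assumption~\ref{Assumption continuous and dense embeddings AT} together with $\range(\AT_{k+1})\subset\dom(\AT_k)$ makes $\bm{\pi}^n_k\AT_{k+1}\z$ meaningful, while for $\z_\circ\in\mathring{\W}^{n,+}_{k+2}=\Wp_{k+2}\cap\dom(\A^*_{k+1})$ one has $\AT_{k+1}\z_\circ=\A^*_{k+1}\z_\circ\in\range(\A^*_{k+1})\subset\dom(\A^*_k)=\null(\bm{\pi}^n_k)$, because $\A^*_{k+1}\subset\AT_{k+1}$ and $\A^*_k\A^*_{k+1}=0$, so the map descends to the quotient; the continuity estimate is identical with $\AT_{k+1}$ in place of $\A_k$. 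Thus the delicate ingredient in both cases is the pair of inclusions $\A_k\mathring{\W}^{t,+}_k\subset\dom(\rA_{k+1})$ and $\AT_{k+1}\mathring{\W}^{n,+}_{k+2}\subset\dom(\A^*_k)$, which is precisely where the extension relations $\rA_k\subset\A_k$, $\A^*_{k+1}\subset\AT_{k+1}$ and the two complex properties are jointly used; everything else follows routinely from boundedness of the constituents.
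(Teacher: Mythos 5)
Your proposal is correct and follows essentially the same route as the paper's proof: verify that the representative lands in the right domain via the complex property, and that the map descends to the quotient because $\A_k\mathring{\W}^{t,+}_k\subset\range(\rA_k)\subset\dom(\rA_{k+1})=\null(\bm{\pi}^t_{k+1})$ and $\AT_{k+1}\mathring{\W}^{n,+}_{k+2}\subset\range(\A^*_{k+1})\subset\dom(\A^*_k)=\null(\bm{\pi}^n_k)$, using the extension relations and the complex properties of the subcomplexes, with the other case handled by symmetry. You are somewhat more thorough than the paper, which treats only the $\hat{\mathsf{S}}^n_{k+1}$ case and leaves the continuity estimate implicit, whereas your factorization $\hat{\mathsf{S}}^t_k=\mathsf{S}^t_k\circ(\text{embedding})$ and the explicit bound via the quotient norm make boundedness transparent.
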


\begin{proof}
  Consider the mapping on the left. We know from the complex property for $\AT_k$ in
  Assumption \ref{Assumption trace} that $\AT_{k+1}\z\in\dom(\AT_{k})$ for all
  $\z\in\Wp_{k+1}$. We only need to verify that $\AT_{k+1}\z_{\circ}\in\dom(\A^*_{k})$ for
  all $\z_{\circ}\in\mr{\W}^{n,+}_{k+2}=\Wp_{k+2}\cap\dom(\A^*_{k+1})$, but this
  immediately follows from the complex property for $\A^*_{k+1}$, also provided by
  Assumption \ref{Assumption trace}. The proof is similar for $\hat{\mathsf{S}}^t_k$.
\end{proof}

Using the same strategy as in lemmas \ref{lem: extension of D to tildeYm} and \ref{lem:
  extension of E to tildeYm}, the mappings
\begin{align}\label{def D hat}
  {\hat{\mathsf{D}}^t_{k}} := (	\hat{\mathsf{S}}^n_{k+1})':\mathcal{T}(\AT_k)'\rightarrow \mathbf{T}^{n,-}_{k+2} &&\text{and}
  &&
  {\hat{\mathsf{D}}^n_{k}} := (	\hat{\mathsf{S}}^t_{k})':\mathcal{T}(\A_{k+1})'\rightarrow \mathbf{T}^{t,-}_k,
\end{align}
defined as the bounded operators dual to $\hat{\mathsf{S}}^n_{k+1}$ and
$\hat{\mathsf{S}}^t_{k}$, can be extended, using \eqref{restriction of functionals quotient spaces},
to the continuous mappings
\begin{align}\label{extension of D hat}
  \cred{\hat{\mathsf{D}}^t_{k}} : \mathbf{T}^{n,-}_{k+1}\rightarrow \mathbf{T}^{n,+}_{k+2}(\hat{\mathsf{S}}^n_{k+1})' &&\text{and}
  && \cred{\hat{\mathsf{D}}^n_{k}} : \mathbf{T}^{t,-}_{k+1}\rightarrow \mathbf{T}^{t,+}_{k}(\hat{\mathsf{S}}^t_{k})',
\end{align}
involving the dual spaces of the Hilbert spaces
\begin{subequations}
  \begin{align}
    \mathbf{T}^{n,+}_{k+2}(\hat{\mathsf{S}}^n_{k+1})&:= \left\{\,[\z]\in\mathbf{T}^{n,+}_{k+2}\,\,\vert\,\,\hat{\mathsf{S}}^n_{k+1}[\z]\in\mathbf{T}^{n,+}_{k+1}\,\right\},\\
    \mathbf{T}^{t,+}_{k}(\hat{\mathsf{S}}^t_{k})&:= \left\{\,[\x]\in\mathbf{T}^{t,+}_{k}\,\,\,\vert\,\,\hat{\mathsf{S}}^t_{k}[\x]\in\mathbf{T}^{t,+}_{k+1}\,\right\},
  \end{align}
\end{subequations}
equipped with the natural graph inner products. 

With the operators \eqref{extension of D hat}, we can reformulate theorems \ref{thm: characterization of range Tt} and \ref{thm: characterization of range Tn} using the isometric isomorphisms
\begin{align}
  (\Wp_{k}/\mathring{\W}^{t,+}_{k})'\cong(\mathring{\W}^{t,+}_{k})^{\circ} &&\text{and} && \Wp_{k}/\mathring{\W}^{n,+}_{k}\cong(\mathring{\W}^{n,+}_{k})^{\circ}
\end{align}
provided by \cite[Thm. 4.9]{MR1157815}.
\medskip

\thmbox{%
\begin{theo}\label{characterization of range T}
  Under assumptions \ref{density if Wnnot spaces} and \ref{density if Wtnot spaces} we
  have the isomorphisms of Hilbert spaces 
  \begin{subequations}
    \begin{align}
      \range(\trace_k) \cong \left\{\,\bm{\phi}\in\mathbf{T}^{n,-}_{k+1}\,\,\vert\,\,\hat{\mathsf{D}}^t_k\bm{\phi}\in\mathbf{T}^{n,-}_{k+2}\,\right\} &&\text{and}
      &&\range(\dualtrace_{k}) \cong  \left\{\,\bm{\phi}\in\mathbf{T}^{t,-}_{k}\,\,\,\,\vert\,\,\hat{\mathsf{D}}^n_k\bm{\phi}\in\mathbf{T}^{t,-}_{k-1}\,\right\},
    \end{align}
  \end{subequations}
  respectively.
\end{theo}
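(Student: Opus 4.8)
The plan is to obtain the two isomorphisms as a mere re-encoding of \Cref{thm: characterization of range Tt} and \Cref{thm: characterization of range Tn}, whose right-hand sides already describe $\range(\trace_k)$ and $\range(\dualtrace_{k})$ as annihilator subspaces of $\Wm_{k+1}$. First I would invoke the canonical duality between quotients and annihilators, \cite[Thm.~4.9]{MR1157815}, in the isometric form $\mathbf{T}^{n,-}_{j}\cong(\mathring{\W}^{n,+}_{j})^{\circ}\subset\Wm_{j}$ and $\mathbf{T}^{t,-}_{j}\cong(\mathring{\W}^{t,+}_{j})^{\circ}\subset\Wm_{j}$: a functional $\bm{\phi}\in\mathbf{T}^{n,-}_{j}=(\Wp_{j}/\mathring{\W}^{n,+}_{j})'$ is identified with $\bm{\psi}:=\bm{\phi}\circ\bm{\pi}\in\Wm_{j}$, where $\bm{\pi}\colon\Wp_{j}\to\mathbf{T}^{n,+}_{j}$ is the quotient map; this $\bm{\psi}$ annihilates $\mathring{\W}^{n,+}_{j}$, and every element of $(\mathring{\W}^{n,+}_{j})^{\circ}$ arises in this way.

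The crux is to verify that, under these identifications for $j=k+1$ and $j=k+2$, the extended operator $\hat{\mathsf{D}}^t_k$ from \eqref{extension of D hat} corresponds to the extended surface operator $\mathsf{D}^t_k$ of \Cref{lem: extension of D to tildeYm}. I would check this by evaluation on test elements. If $[\z]$ lies in the graph space $\mathbf{T}^{n,+}_{k+2}(\hat{\mathsf{S}}^n_{k+1})$, represented by $\z\in\Wp_{k+2}$ with $\AT_{k+1}\z\in\Wp_{k+1}$, then the embedding $\mathbf{T}^{n,+}_{k+1}\hookrightarrow\mathcal{T}(\AT_k)$, $[\w]\mapsto\bm{\pi}^n_k\w$, sends the class $[\AT_{k+1}\z]$ to $\bm{\pi}^n_k\AT_{k+1}\z=\hat{\mathsf{S}}^n_{k+1}[\z]$, so that for $\bm{\phi}\leftrightarrow\bm{\psi}$,
\[
  \langle\hat{\mathsf{D}}^t_k\bm{\phi},[\z]\rangle
  =\langle\bm{\phi},\hat{\mathsf{S}}^n_{k+1}[\z]\rangle
  =\bm{\phi}\big([\AT_{k+1}\z]\big)
  =\bm{\psi}(\AT_{k+1}\z)
  =\langle\mathsf{D}^t_k\bm{\psi},\z\rangle .
\]
Hence $\hat{\mathsf{D}}^t_k\bm{\phi}$ and $\mathsf{D}^t_k\bm{\psi}$ are the same functional; and since a bounded functional on $\mathbf{T}^{n,+}_{k+2}=\Wp_{k+2}/\mathring{\W}^{n,+}_{k+2}$ is exactly a bounded functional on $\Wp_{k+2}$ annihilating $\mathring{\W}^{n,+}_{k+2}$, the condition $\hat{\mathsf{D}}^t_k\bm{\phi}\in\mathbf{T}^{n,-}_{k+2}$ is equivalent to $\mathsf{D}^t_k\bm{\psi}\in(\mathring{\W}^{n,+}_{k+2})^{\circ}$.

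With this operator match, the isometry $\mathbf{T}^{n,-}_{k+1}\cong(\mathring{\W}^{n,+}_{k+1})^{\circ}$ carries $\{\bm{\phi}\in\mathbf{T}^{n,-}_{k+1}\mid\hat{\mathsf{D}}^t_k\bm{\phi}\in\mathbf{T}^{n,-}_{k+2}\}$ bijectively onto $\{\bm{\psi}\in(\mathring{\W}^{n,+}_{k+1})^{\circ}\mid\mathsf{D}^t_k\bm{\psi}\in(\mathring{\W}^{n,+}_{k+2})^{\circ}\}$, which is precisely $\range(\trace_k)$ by \Cref{thm: characterization of range Tt}; combining the isometry of the duality with the equivalence of norms furnished by that theorem yields the asserted Hilbert-space isomorphism. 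The second identity, for $\range(\dualtrace_k)$, follows from the completely symmetric argument in the dual setting, using \Cref{thm: characterization of range Tn} together with the analogous identification of $\hat{\mathsf{D}}^n$ with $\mathsf{D}^n$. I expect the main difficulty to be purely in the bookkeeping of the operator-match step: one must keep straight the three nested layers --- the quotient embeddings $\mathbf{T}^{n,+}_{j}\hookrightarrow\mathcal{T}(\AT_{j-1})$, the transpose defining $\hat{\mathsf{D}}^t_k=(\hat{\mathsf{S}}^n_{k+1})'$, and the graph-space extension of \eqref{extension of D hat} --- and confirm that the indices align so that this extension mirrors the extension of $\mathsf{D}^t_k$ to $\Wm_{k+1}$ in \Cref{lem: extension of D to tildeYm}.
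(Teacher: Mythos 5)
Your proposal is correct and takes essentially the same route as the paper: the paper offers no separate proof of \Cref{characterization of range T}, presenting it precisely as a reformulation of \Cref{thm: characterization of range Tt} and \Cref{thm: characterization of range Tn} through the quotient--annihilator isometries $\left(\Wp_{k}/\mathring{\W}^{n,+}_{k}\right)'\cong(\mathring{\W}^{n,+}_{k})^{\circ}$ and $\left(\Wp_{k}/\mathring{\W}^{t,+}_{k}\right)'\cong(\mathring{\W}^{t,+}_{k})^{\circ}$ from \cite[Thm.~4.9]{MR1157815}. Your explicit operator-match computation showing that $\hat{\mathsf{D}}^t_k$ corresponds to the extended $\mathsf{D}^t_k$ of \Cref{lem: extension of D to tildeYm} under these identifications simply spells out the bookkeeping the paper leaves implicit.
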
}

\begin{example}{Characterization of trace spaces by quotient spaces}\label{eg: Characterization of trace spaces using quotient spaces}
  Recall from \eqref{eq: ker t trace} and \eqref{eq: ket n trace} that
  $\null(\gamma_t)= \Hcurlnot$ and $\null(\gamma_n)=\Hdivnot$. So let us denote the spaces of $\mathbf{H}^1$-regular vector fields with vanishing tangential and normal traces by
  \begin{subequations}
    \begin{align}
      \Ht&:=\null(\gamma_t\big\vert_{\mathbf{H}^1(\Omega)})=\mathbf{H}^1(\Omega)\cap\Hcurlnot\label{eq: ker of t trace on H1}\\ 
      \Hn&:=\null(\gamma_n\big\vert_{\mathbf{H}^1(\Omega)})=\mathbf{H}^1(\Omega)\cap\Hdivnot,\label{eq: ker of n trace on H1}
    \end{align}
  \end{subequations}
  respectively.
  \medskip
  \begin{center}
    \renewcommand{\arraystretch}{1.2}
    \begin{tabular}{c|cccc}
      $k$          & $0$           & $1$              & $2$ &  $3$\rule{0pt}{3ex}\\
      \midrule
      $\W_k$      & $L^2(\Omega)$  &$\bLtwo$    &  $\bLtwo$ & $\Ltwo$\rule{0pt}{3ex}\\
      $\Wp_k$      & $H^1(\Omega)$  &$\H^1(\Omega)$    &  $\H^1(\Omega)$ &$H^1(\Omega)$\\
      $\mathbf{T}^{t,+}_k$      & $\Hone/\Honenot$ &$\H^1(\Omega)/\Ht$    &  $\H^1(\Omega)/\Hn$ & $\Hone/\Honenot$ \rule{0pt}{3ex}\\
      $\mathbf{T}^{n,+}_k$      & $\Hone/\Honenot$  &$\H^1(\Omega)/\Hn$    &  $\H^1(\Omega)/\Ht$ &$\Hone/\Honenot$\rule{0pt}{3ex}\\
    \end{tabular}
  \end{center}
  \bigskip

  Reformulating \eqref{eq: charact dualtrace curl} and \eqref{eq: charact dualtrace div}, we obtain
  \begin{subequations}
    \begin{align}
      \range(\trace_{\mathbf{curl}})&=\range(\dualtrace_{\mathbf{curl}}) \cong \left\{\,\bm{\phi}\in \Big(\mathbf{H}^1(\Omega)/\Ht\Big)'\,\,\vert\,\,\mathbf{grad}'\,\bm{\phi}\in\Big(\Hone/\Honenot\Big)'\right\},\label{eq: char quotient 1}\\
      \range(\trace_{\mathbf{grad}})&=\range(\dualtrace_{\text{div}}) \,\,\,\cong \left\{\,\bm{\phi}\in \Big(\mathbf{H}^1(\Omega)/\Hn\Big)'\,\,\vert\,\,\mathbf{curl}'\,\bm{\phi}\in \Big(\mathbf{H}^1(\Omega)/\Ht\Big)'\right\}.\label{eq: char quotient 2}
    \end{align}
  \end{subequations}

  These characterizations are to be compared with
  \begin{subequations}
    \begin{align}
      \mathbf{H}^{-1/2}(\text{curl}_\Gamma,\Gamma)=\left\{\,\bm{\phi}\in \mathbf{H}_t^{-1/2}(\Gamma)\,\,\vert\,\,\text{curl}_{\Gamma}\,\bm{\phi}\in H^{-1/2}(\Gamma)\,\right\}=\range(\gamma_t),\\
      H^{1/2}(\Gamma)=\left\{\,\bm{\phi}\in H^{-1/2}(\Gamma)\,\,\vert\,\,\mathbf{curl}_{\Gamma}\,\bm{\phi}\in \mathbf{H}_t^{-1/2}(\Gamma)\,\right\}=\range(\gamma),
    \end{align}
  \end{subequations}
  where as before the two spaces
  \begin{subequations}
    \begin{align}
      H^{-1/2}(\Gamma)&=\left(H^{1/2}(\Gamma)\right)'=\left(\gamma H^1(\Omega)\right)'\\ 
      \H_t^{-1/2}(\Gamma)&=\left(\H_t^{1/2}(\Gamma)\right)'=\left(\gamma_t\H^1(\Omega)\right)'
    \end{align}
  \end{subequations}
  are dual to the more regular spaces $\gamma \,H^1(\Omega)$ and $\gamma_t\,\H^1(\Omega)$, respectively. 

  In the classical trace spaces, the quotient spaces involved in \eqref{eq: char quotient 1} and \eqref{eq: char quotient 2} are featured implicitly, because as previously stated in \eqref{eq: ker of t trace on H1} and \eqref{eq: ker of n trace on H1}, $\Ht$ and $\Hn$ are kernels which vanish under application of the traces. In fact, since $\gamma:\Hone\rightarrow H^{1/2}(\Gamma)$ and  $\gamma_t:\mathbf{H}^1(\Omega)\rightarrow \mathbf{H}_t^{1/2}(\Gamma)$ are surjective, it follows from \eqref{eq: ker of t trace on H1} and \eqref{eq: ker of n trace on H1} that the same argument as in the 3D de Rham setting \ref{spe: trace spaces} shows that the traces induce the isomorphisms
  \begin{align}
    \mathbf{H}_t^{1/2}(\Gamma)\cong\mathbf{H}^1(\Omega)/\Ht &&\text{and}   &&H^{1/2}(\Gamma)\cong\Hone/\Honenot,
  \end{align}
  which in turn imply isomorphisms between the dual spaces.

  We would like to draw the reader's attention to the fact that it is an annihilator
  related to the kernel of the dual trace that is used to characterize the range of the
  primal trace and vice-versa. This is in agreement with the characterizations provided in
  \cite{buffa2002traces}, where the range of $\gamma_t$ is characterized using the dual
  space $(\gamma_{\tau}\mathbf{H}^1(\Omega))'$, involving the rotated tangential trace
  $\gamma_{\tau}$ discussed in the 3D de Rham setting \ref{Commutative relations}. As in \cite{buffa2002traces},
  recall that if the skew-symmetric pairing \eqref{skew-symmetric pairing} is replaced
  with the $\mathbf{L}^2(\Gamma)$-pairing, the dual trace $\dualtrace_{\mathbf{curl}}$,
  corresponding with the rotated tangential trace (roughly speaking), arises in the
  abstract setting of \Cref{sec: Dual traces} as dual to $\trace_{\mathbf{curl}}$, which
  corresponds to $\gamma_t$.

  Finally, notice that the surface operators $\text{curl}_{\Gamma}$ and
  $\mathbf{curl}_{\Gamma}$ are dual to the domain operators on which the relevant traces
  are applied, which is in line with \eqref{eq: char quotient 1} and \eqref{eq: char
    quotient 2}, i.e. (cf. \cite{buffa2002traces})
\begin{align}
\text{curl}_{\Gamma}\circ\gamma = (\gamma_t\circ\nabla)' &&\text{and} &&\mathbf{curl}_{\Gamma}\circ\gamma_t=(\gamma_n\circ\mathbf{curl})'.
\end{align}
\end{example}

\section{Trace Hilbert Complexes}
\label{sec: Trace Hilbert Complexes}
From now on, we make use of the full setting of Hilbert complexes as presented in
\Cref{sec: Hilbert complexes}. Both Assumptions \ref{density if Wnnot spaces} and
\ref{density if Wtnot spaces} are not required for the mere characterization of the trace
Hilbert complexes in \Cref{sec: Complexes of quotient spaces}: each one of these
hypotheses suffices for the corresponding characterization. However, we do rely on
\emph{both} decompositions for the upcoming compactness result in \Cref{sec: Compactness},
where we must take \eqref{eq: inclusion cap AT} for granted.

\subsection{Complexes of quotient spaces}
\label{sec: Complexes of quotient spaces}
It is easy to verify that $\mathsf{D}^t_{k+1}\circ\mathsf{D}^t_{k}=0$, $\mathsf{D}^n_{k}\circ\mathsf{D}^n_{k+1}=0$, $\mathsf{S}^t_{k+1}\circ\mathsf{S}^t_{k}=0$ and
$\mathsf{S}^n_{k}\circ\mathsf{S}^n_{k+1}=0$. Therefore, we have already seen from \eqref{commutative diagram surface operators in domain spaces} that Hilbert complexes give rise to Hilbert complexes in trace spaces. The bounded complexes
\begin{subequations}
  \begin{equation}\label{Hilbert trace complex RTt}
    \def\arrowlength{7ex}
    \def\arrowdistance{.8}
    \begin{tikzcd}[column sep=\arrowlength]
      \cdots 
      \arrow[r, rightarrow, shift left=\arrowdistance, "\mathsf{D}^t_k"] 
      & 
      \range(\trace_k) 
      \ar[r, rightarrow, shift left=\arrowdistance, "\mathsf{D}^t_k"] 
      & 
      \range(\trace_{k+1})
      \arrow[r, rightarrow, shift left=\arrowdistance, "\mathsf{D}^t_{k+1}"] 
      & 
      \range(\trace_{k+2})
      \arrow[r, rightarrow, shift left=\arrowdistance, "\mathsf{D}_{k+2}"] 
      &
      \cdots,
    \end{tikzcd}
  \end{equation}
  and
  \begin{equation}\label{Hilbert trace complex RTn}
    \def\arrowlength{7ex}
    \def\arrowdistance{.8}
    \begin{tikzcd}[column sep=\arrowlength]
      \cdots 
      \arrow[r, leftarrow, shift right=\arrowdistance, "\mathsf{D}_k"']
      & 
      \range(\dualtrace_{\A_{k}})
      \ar[r, leftarrow, shift right=\arrowdistance, "\mathsf{D}^n_{k+1}"']
      & 
      \range(\dualtrace_{\A_{k+1}})
      \arrow[r, leftarrow, shift right=\arrowdistance, "\mathsf{D}^n_{k+2}"']
      & 
      \range(\dualtrace_{\A_{k+2}})
      \arrow[r, leftarrow, shift right=\arrowdistance, "\mathsf{D}_{k+3}"']
      &
      \cdots,
    \end{tikzcd}
  \end{equation}
\end{subequations}
are isometrically isomorphic to the bounded complexes of quotient spaces
\begin{subequations}
  \begin{equation}\label{Hilbert trace complex TAk}
    \def\arrowlength{7ex}
    \def\arrowdistance{.8}
    \begin{tikzcd}[column sep=\arrowlength]
      \cdots 
      \arrow[r, rightarrow, shift left=\arrowdistance, "\mathsf{S}^t_k"] 
      & 
      \tracespace 
      \ar[r, rightarrow, shift left=\arrowdistance, "\mathsf{S}^t_k"] 
      & 
      \mathcal{T}(\A_{k+1})
      \arrow[r, rightarrow, shift left=\arrowdistance, "\mathsf{S}^t_{k+1}"] 
      & 
      \mathcal{T}(\A_{k+2})
      \arrow[r, rightarrow, shift left=\arrowdistance, "\mathsf{S}^t_{k+2}"] 
      &
      \cdots,
    \end{tikzcd}
  \end{equation}
  and
  \begin{equation}\label{Hilbert trace complex TATk}
    \def\arrowlength{7ex}
    \def\arrowdistance{.8}
    \begin{tikzcd}[column sep=\arrowlength]
      \cdots 
      \arrow[r, leftarrow, shift right=\arrowdistance, "\mathsf{S}^n_k"']
      & 
      \mathcal{T}(\AT_k) 
      \ar[r, leftarrow, shift right=\arrowdistance, "\mathsf{S}^n_{k+1}"']
      & 
      \mathcal{T}(\AT_{k+1})
      \arrow[r, leftarrow, shift right=\arrowdistance, "\mathsf{S}^n_{k+2}"']
      & 
      \mathcal{T}(\AT_{k+2})
      \arrow[r, leftarrow, shift right=\arrowdistance, "\mathsf{S}^n_{k+3}"']
      &
      \cdots.
    \end{tikzcd}
  \end{equation}
\end{subequations}

While the bounded domain complexes are interesting in their own right, the rich structure
of Hilbert complexes reveals itself when closed densely defined unbounded operators are
introduced. As stated in \cite[Chap. 4]{MR3908678}, the complex produced by the latter
contains more information than the associated domain complexes. It turns out that the
characterizations provided in \Cref{Characterization by regular subspaces} shed more light
on the structure of \eqref{Hilbert trace complex RTt}-\eqref{Hilbert trace complex
  TATk}. The next theorem provides a first characterization of what we call \emph{trace
  Hilbert complexes}.
\medskip

\noindent\fbox{%
  \begin{minipage}[c]{0.975\textwidth}
    \begin{theo}\label{thm: trace Hilbert complexes definition}
      Under assumptions \ref{density if Wnnot spaces} and \ref{density if Wtnot spaces}
      respectively, the sequences of unbounded operators
      \begin{gather}
        \label{trace Hilbert complex RTt full}
        \begin{CD}
          \cdots @>{\mathsf{D}^t_{k-1}}>>
          \range(\trace_{k})\subset(\mathring{\W}^{n,+}_{k+1})^{\circ}
          @>{\mathsf{D}^t_{k}}>>
          \range(\trace_{k+1})\subset(\mathring{\W}^{n,+}_{k+2})^{\circ}
          @>{\mathsf{D}^t_{k+2}}>>\cdots
        \end{CD}
        \intertext{and}
        \label{trace Hilbert complex RTn full}
        \begin{CD}
          \cdots
          @<<{\mathsf{D}^n_{k}}<
          \range(\dualtrace_{k}) \subset(\mathring{\W}^{t,+}_{k})^{\circ}
          @<<{\mathsf{D}^n_{k+1}}<
          \range(\dualtrace_{k+1}) \subset(\mathring{\W}^{t,+}_{k+1})^{\circ}
          @<<{\mathsf{D}^n_{k+2}}<
          \cdots
        \end{CD}
      \end{gather}
      are Hilbert complexes as defined in \Cref{sec: Hilbert complexes}.
    \end{theo}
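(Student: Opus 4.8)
The plan is to verify, for the primal sequence \eqref{trace Hilbert complex RTt full}, the three defining properties of a Hilbert complex from \Cref{sec: Hilbert complexes}: that the ambient spaces are Hilbert spaces, that each $\D^t_k$ is a closed and densely defined unbounded operator between consecutive ambient spaces, and that consecutive compositions vanish. The dual sequence \eqref{trace Hilbert complex RTn full} is then dispatched by the verbatim symmetric argument, exchanging $\D^t_k$ for $\D^n_k$, \Cref{thm: characterization of range Tt} for \Cref{thm: characterization of range Tn}, and Assumption \ref{density if Wnnot spaces} for \ref{density if Wtnot spaces}; so I would treat the primal case in detail and merely remark on the symmetry.

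I read the ambient space at position $k$ as $(\mathring{\W}^{n,+}_{k+1})^{\circ}$, the annihilator of $\mathring{\W}^{n,+}_{k+1}$ inside $\Wm_{k+1}=(\Wp_{k+1})'$. Since the topological dual of a Hilbert space is a Hilbert space and annihilators are closed subspaces, each $(\mathring{\W}^{n,+}_{k+1})^{\circ}$ is a Hilbert space, settling the first requirement. The complex property is essentially already in hand: as noted at the start of \Cref{sec: Complexes of quotient spaces}, $\D^t_{k+1}\circ\D^t_k=0$, which is nothing but the dual identity $(\AT_{k+1}\circ\AT_{k+2})'=0$ stemming from the complex property of the $\AT$ sequence in Assumption \ref{Assumption trace}; and by \eqref{eq: Dt maps trace space to trace space} one has $\D^t_k(\range(\trace_k))\subset\range(\trace_{k+1})$, so the image of the domain lands in the next domain and is annihilated by $\D^t_{k+1}$.

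Next I would view $\D^t_k$ as an unbounded operator from $(\mathring{\W}^{n,+}_{k+1})^{\circ}$ into $(\mathring{\W}^{n,+}_{k+2})^{\circ}$ via the continuous extension of \Cref{lem: extension of D to tildeYm}, with domain $\range(\trace_k)$; that the domain is exactly this set is the content of \Cref{thm: characterization of range Tt}. Closedness is then inherited from the same theorem: the graph norm of $\D^t_k$ between the two annihilators, carrying their subspace norms, is precisely $\norm{\cdot}_{\Wm_{k+1}(\D^t_k)}$, and \Cref{thm: characterization of range Tt} asserts this is equivalent to the $\dom(\AT_k)'$-norm on $\range(\trace_k)$. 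Since $\range(\trace_k)=\dom(\A_k^*)^{\circ}$ is a closed annihilator in $\dom(\AT_k)'$, hence complete, the domain is complete in its graph norm, which is exactly the statement that $\D^t_k$ is closed.

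The step I expect to be the main obstacle is density of the domain $\range(\trace_k)$ in the ambient space $(\mathring{\W}^{n,+}_{k+1})^{\circ}$, and I would handle it in the quotient picture of \Cref{Characterization of trace spaces based on quotient spaces}. Under the isometric isomorphisms $(\mathring{\W}^{n,+}_{k+1})^{\circ}\cong\mathbf{T}^{n,-}_{k+1}$ and $\range(\trace_k)\cong\{\bm{\phi}\in\mathbf{T}^{n,-}_{k+1}\,\vert\,\hat{\mathsf{D}}^t_k\bm{\phi}\in\mathbf{T}^{n,-}_{k+2}\}$ furnished by \Cref{characterization of range T}, the domain becomes the domain of the unbounded operator $\hat{\mathsf{D}}^t_k=(\hat{\mathsf{S}}^n_{k+1})'$. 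The restriction-of-functionals map in \eqref{restriction of functionals quotient spaces} is a dense embedding $\mathcal{T}(\AT_{k})'\hookrightarrow\mathbf{T}^{n,-}_{k+1}$, and every $\bm{\phi}\in\mathcal{T}(\AT_{k})'$ satisfies $\hat{\mathsf{D}}^t_k\bm{\phi}=(\hat{\mathsf{S}}^n_{k+1})'\bm{\phi}\in\mathbf{T}^{n,-}_{k+2}$ directly from the definition \eqref{def D hat} of the dual operator; hence $\mathcal{T}(\AT_{k})'$ lies inside the domain, and its density in $\mathbf{T}^{n,-}_{k+1}$ forces the domain to be dense there as well. Transporting this back through the isomorphisms yields density of $\range(\trace_k)$ in $(\mathring{\W}^{n,+}_{k+1})^{\circ}$, completing the verification that \eqref{trace Hilbert complex RTt full} is a Hilbert complex, and the dual statement follows by symmetry.
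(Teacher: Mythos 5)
Your proposal is correct, and its skeleton (reduce to the primal sequence by symmetry; get the complex property from \eqref{eq: Dt maps trace space to trace space}; get closedness from completeness of $\range(\trace_{k})=\dom(\A_k^*)^{\circ}$ together with the norm equivalence in \Cref{thm: characterization of range Tt}; identify density of the domain as the only real work) coincides with the paper's. Your closedness step is in fact \emph{more} explicit than the paper's, which merely remarks that the operator ``must be closed'' because $\dom(\A_k^*)^{\circ}$ is a Hilbert space; your observation that the graph norm on the annihilators is exactly $\norm{\cdot}_{\Wm_{k+1}(\D^t_k)}$, equivalent by \Cref{thm: characterization of range Tt} to the $\dom(\AT_k)'$-norm under which the domain is complete, is precisely the right way to make that remark rigorous via the criterion that an operator is closed iff its domain is complete in the graph norm.

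Where you genuinely diverge is the density step. The paper works directly in the $\Wm_{k+1}$-picture: given $\widetilde{\bm{\phi}}_{\circ}\in((\mathring{\W}^{n,+}_{k+1})^{\circ})'$ vanishing on $\range(\trace_k)$, it extends by Hahn--Banach to $(\Wm_{k+1})'$, uses reflexivity of $\Wp_{k+1}$ to represent the extension by some $\y\in\Wp_{k+1}$, deduces from $\bm{\xi}(\y)=0$ for all $\bm{\xi}\in\dom(\A_k^*)^{\circ}$ that $\y\in\dom(\A_k^*)\cap\Wp_{k+1}=\mathring{\W}^{n,+}_{k+1}$, and concludes $\widetilde{\bm{\phi}}_{\circ}=0$. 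You instead transport everything through \Cref{characterization of range T} to the quotient picture and invoke the density of the restriction embedding $\mathcal{T}(\AT_{k})'\hookrightarrow\mathbf{T}^{n,-}_{k+1}$ from \eqref{restriction of functionals quotient spaces}, noting that every restricted $\bm{\psi}\in\mathcal{T}(\AT_k)'$ has $\hat{\mathsf{D}}^t_k$-image in $\mathbf{T}^{n,-}_{k+2}$ by \eqref{def D hat} (indeed, the image of $\mathcal{T}(\AT_k)'\cong\range(\trace_k)$ is not just contained in but equal to the domain). This is valid and shorter, but be aware of two points. First, the density assertion in \eqref{restriction of functionals quotient spaces} is \emph{stated} in the paper without proof; its justification is exactly the Hahn--Banach-plus-reflexivity computation the paper carries out inside this proof, so at bottom the two arguments coincide, and if one demands a self-contained proof, yours defers the essential step rather than performing it. Second, \Cref{characterization of range T} is stated under assumptions \ref{density if Wnnot spaces} \emph{and} \ref{density if Wtnot spaces} jointly, whereas the theorem you are proving uses them ``respectively''; you should note that only the primal half of that characterization (which needs only Assumption \ref{density if Wnnot spaces}) enters your argument for \eqref{trace Hilbert complex RTt full}, so the hypothesis structure of the theorem is preserved.
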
%
  \end{minipage}%
}

\begin{proof}
  By symmetry, it is sufficient to verify the claim for \eqref{trace Hilbert complex RTt full}. In light for \eqref{Hilbert trace complex RTt} and \Cref{thm: characterization of range Tt}, we simply need to show that $\mathsf{D}^t_k:\range(\trace_k)\subset (\mathring{\W}^{n,+}_{k+1})^{\circ}\rightarrow (\mathring{\W}^{n,+}_{k+2})^{\circ}$ is a densely defined and closed unbounded linear operator. In fact, since $\range(\trace_k)=\dom(\A^*_k)\subset\dom(AT_k)'$ is a Hilbert space by \Cref{prop: range is annihilator}, we already know that such an operator must be closed, and we only need to confirm that $\range(\trace_k)$ is dense in $(\mathring{\W}^{n,+}_{k+1})^{\circ}$.

  We need two key mappings:
  \begin{itemize}
  \item Recall that since $\Wp_{k+1}$ is a Hilbert space and Hilbert spaces are reflexive (cf. \cite[Sec. 4.5]{MR1157815}, \cite[Thm. 5.5]{MR2759829}), the map
    \begin{equation}
      \bm{\rho}: \left\{
	\arraycolsep=1.4pt\def\arraystretch{1.2}
	\begin{array}{rcl}
          \Wp_{k+1} &\longrightarrow &(\Wm_{k+1})'\\
          \y &\mapsto
          &\left\{
            \arraycolsep=1.4pt\def\arraystretch{1.2}
            \begin{array}{rcl}
              \Wm_{k+1} &\rightarrow &\mathbb{R}\\
              \bm{\phi} &\mapsto &\bm{\rho}\y(\bm{\phi})=\bm{\phi}(\y)
            \end{array}
          \right.
	\end{array}
      \right.
    \end{equation}
    is an isometric isomorphism. Substituting $\bm{\rho}^{-1}(\widetilde{\bm{\phi}})$ for $\y$ in the definition $(\bm{\rho}\y)(\bm{\phi})=\bm{\phi}(\y)$, we find a useful formula involving the inverse:
    \begin{equation}\label{eq: inverse of rho}
      \widetilde{\bm{\psi}}(\bm{\phi}) = \bm{\phi}(\bm{\rho}^{-1}\widetilde{\bm{\psi}})
    \end{equation}
    for all $\bm{\phi}\in\Wm_{k+1}$ and $\widetilde{\bm{\psi}}\in (\Wm_{k+1})'$.
  \item Since the inclusion $\Wp_{k+1}\hookrightarrow \dom(\AT_k)$ is continuous and dense by Assumption \ref{Assumption continuous and dense embeddings AT}, the \emph{restriction} of functionals $\mathsf{J}:\dom(\AT_k)'\rightarrow \Wm_{k+1}$ is also a continuous and dense embedding. In particular, because $\range(\trace_k)=\dom(\A^*)^{\circ}$ by \Cref{prop: range is annihilator} and $\mathring{\W}^{n,+}_{k+1}\subset\dom(\A_k^*)$ by definition, it satisfies the important property that $\mathsf{J}(\range(\trace_k))\subset (\mathring{\W}^{n,+}_{k+1})^{\circ}$.
  \end{itemize}

  To prove density, we show that an arbitrary functional $\widetilde{\bm{\phi}}_{\circ}\in((\mathring{\W}^{n,+}_{k+1})^{\circ})'$ such that $\widetilde{\bm{\phi}}_{\circ}(\mathsf{J}\bm{\xi})=0$ for all $\bm{\xi}\in\range(\trace_k)$ vanish in $((\mathring{\W}^{n,+}_{k+1})^{\circ})'$. We proceed in three short steps.

  \begin{enumerate}[label=\textbf{\arabic*})]
  \item First, we use the Hahn--Banach theorem to extend $\widetilde{\bm{\phi}}_{\circ}$ to a functional $\widetilde{\bm{\phi}}\in(\Wm_{k+1})'$. By definition,
    \begin{align}\label{vanishing of phi tilde}
      \widetilde{\bm{\phi}}(\mathsf{J}\bm{\xi})=0 &&\forall\bm{\xi}\in\range(\trace_k).
    \end{align}

  \item Secondly, we set $\y:=\bm{\rho}^{-1}\widetilde{\bm{\phi}}\in\Wp_{k+1}\subset\dom(\AT_k)$. Based on \eqref{eq: inverse of rho}, it follows from \eqref{vanishing of phi tilde} that
    \begin{align}\label{eq: property of y}
      \bm{\xi}(\y)=\mathsf{J}\bm{\xi}(\y)=\mathsf{J}\bm{\xi}(\bm{\rho}^{-1}\widetilde{\bm{\phi}})=\widetilde{\bm{\phi}}(\mathsf{J}\bm{\xi})=0&&\forall\bm{\xi}\in\range(\trace_k)=\dom(\A^*_k)^{\circ}.
    \end{align}
    In particular, we obtain from \eqref{eq: property of y} that $\y\in\dom(\A_k^*)$. Thus, under the choice made in \eqref{def of not t space}, $\y\in\dom(\A_k^*)\cap\Wp_{k+1}=\mathring{\W}^{n,+}_k$.
  \item Finally, the previous step implies that 
    \begin{align}
      \widetilde{\bm{\phi}}(\bm{\phi}_{\circ}) = \bm{\rho}\y(\bm{\phi}_{\circ})=\bm{\phi}_{\circ}(\y)=0 &&\forall\bm{\phi}_{\circ}\in (\mathring{\W}^{n,+}_{k+1})^{\circ}.
    \end{align}
  \end{enumerate}
  Therefore, $\widetilde{\bm{\phi}}_{\circ}=\widetilde{\bm{\phi}}\big\vert_{(\mathring{\W}^{n,+}_{k+1})^{\circ}}=0$, which concludes the proof.
\end{proof}

Now, rewriting the trace Hilbert complexes \eqref{trace Hilbert complex RTt full} and \eqref{trace Hilbert complex RTn full} in terms of the isometrically isomorphic characterizations given in \Cref{characterization of range T}, we obtain the Hilbert complexes  

\begin{subequations}
  \begin{equation}\label{Hilbert trace complex St}
    \def\arrowlength{7ex}
    \def\arrowdistance{.4}
    \begin{tikzcd}[column sep=\arrowlength]
      \cdots 
      \ar[r, rightarrow, shift left=\arrowdistance, "\hat{\mathsf{D}}^t_{k-1}"] 
      & 
      \mathbf{T}^{n,-}_{k+1}(\hat{\mathsf{D}}^t_{k}) \subset \mathbf{T}^{n,-}_{k+1}
      \arrow[r, rightarrow, shift left=\arrowdistance, "\hat{\mathsf{D}}^t_{k}"] 
      & 
      \mathbf{T}^{n,-}_{k+2}(\hat{\mathsf{D}}^t_{k+1}) \subset\mathbf{T}^{n,-}_{k+2}
      \arrow[r, rightarrow, shift left=\arrowdistance, "\hat{\mathsf{D}}^t_{k+1}"] 
      &
      \cdots
    \end{tikzcd}
  \end{equation}
  and
  \begin{equation}\label{Hilbert trace complex Sn}
    \def\arrowlength{7ex}
    \def\arrowdistance{.4}
    \begin{tikzcd}[column sep=\arrowlength]
      \cdots 
      \arrow[r, leftarrow, shift right=\arrowdistance, "\hat{\mathsf{D}}^n_k"']
      & 
      \mathbf{T}^{t,-}_{k}(\hat{\mathsf{D}}^n_{k})\subset\mathbf{T}^{t,-}_{k} 
      \ar[r, leftarrow, shift right=\arrowdistance, "\hat{\mathsf{D}}^n_{k+1}"']
      & 
      \mathbf{T}^{t,-}_{k+1}(\hat{\mathsf{D}}^n_{k+1}) \subset \mathbf{T}^{t,-}_{k+1}
      \arrow[r, leftarrow, shift right=\arrowdistance, "\hat{\mathsf{D}}^n_{k+2}"']
      & 
      \cdots.
    \end{tikzcd}
  \end{equation}
\end{subequations}

\subsection{Compactness property}\label{sec: Compactness}
It is well-known that compact embeddings of the regular spaces $\Wp_k\subset\W_k$ in the
stable decompositions \eqref{decomposition of DAT with operators} and \eqref{decomposition
  of DA with operators} lead to the Hilbert complexes \eqref{Hilbert complex Ak} and
\eqref{Hilbert dual complex ATk} being Fredholm. For convenience, we review this result in
the next lemma.
\begin{statement}
  \label{F}
  \label{assum: G}
  Suppose that the dense inclusions $\imath^+_k:\Wp_{k}\hookrightarrow\W_k$ are compact for all $k\in\mathbb{Z}$.
\end{statement}
\begin{lem}\label{lem: compactness property DA}
  Under Assumption \ref{assum: G}, Assumptions \ref{Assumption continuous and dense
    embeddings AT} and \ref{dual regular decomposition} guarantee compactness of the
  inclusions
  \begin{align}
    \dom(\AT_k)\cap\dom(\rA_{k+1})\hookrightarrow \W_{k+1} &&\text{and} && \dom(\A_{k+1})\cap\dom(\A_{k}^*)\hookrightarrow \W_{k+1},
  \end{align}
  respectively.
\end{lem}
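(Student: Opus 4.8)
The plan is to prove compactness through the sequential criterion: I will show that every sequence $(\y_j)_j$ that is bounded in the graph norm of the intersection space $X:=\dom(\AT_k)\cap\dom(\rA_{k+1})$ admits a subsequence which is Cauchy, hence convergent, in $\W_{k+1}$. The two claimed embeddings are mirror images of one another under the duality interchanging $(\A_\bullet,\rA_\bullet)$ with $(\AT_\bullet,\A_\bullet^*)$, so I would derive the first one from Assumption \ref{Assumption continuous and dense embeddings AT} and then obtain the second one from Assumption \ref{dual regular decomposition} by repeating the argument with the roles of the two complexes swapped.

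First I would invoke the stable regular decomposition \eqref{decomposition of DAT with operators}: for each $j$ write
\[
  \y_j=\w_j+\AT_{k+1}\v_j,\qquad \w_j:=\mathsf{L}^t_{k+1}\y_j\in\Wp_{k+1},\quad \v_j:=\mathsf{V}^t_{k+1}\y_j\in\Wp_{k+2}.
\]
Since $\mathsf{L}^t_{k+1}$ and $\mathsf{V}^t_{k+1}$ are bounded and $\norm{\y_j}_{\dom(\AT_k)}\le\norm{\y_j}_{X}$, the sequences $(\w_j)$ and $(\v_j)$ are bounded in $\Wp_{k+1}$ and $\Wp_{k+2}$. By Assumption \ref{assum: G} the inclusions $\Wp_{k+1}\hookrightarrow\W_{k+1}$ and $\Wp_{k+2}\hookrightarrow\W_{k+2}$ are compact, so after passing to a subsequence (not relabeled) I may assume that $(\w_j)$ converges in $\W_{k+1}$ and $(\v_j)$ converges in $\W_{k+2}$; in particular both are Cauchy there.

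The heart of the matter is controlling the potential part $\AT_{k+1}\v_j$, for which I would expand
\[
  \norm{\y_j-\y_l}_{\W_{k+1}}^2=(\y_j-\y_l,\w_j-\w_l)_{\W_{k+1}}+(\y_j-\y_l,\AT_{k+1}(\v_j-\v_l))_{\W_{k+1}}.
\]
The first summand is bounded by $\norm{\y_j-\y_l}_{\W_{k+1}}\norm{\w_j-\w_l}_{\W_{k+1}}$ and tends to $0$ because $(\y_j)$ is bounded in $\W_{k+1}$ while $(\w_j)$ is Cauchy. The delicate summand is the second one, where the unbounded operator $\AT_{k+1}$ must be transferred off the factor $\v_j-\v_l$. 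This is legitimate because $\Wp_{k+2}\hookrightarrow\dom(\AT_{k+1})$ by \eqref{dense embedding Yp and ZP} (so $\v_j-\v_l\in\dom(\AT_{k+1})$), whereas $\y_j-\y_l\in\dom(\rA_{k+1})$ by the very definition of $X$; since $\AT_{k+1}=\rA_{k+1}^*$, the defining adjoint relation \eqref{def: hilbert space adjoint} yields
\[
  (\y_j-\y_l,\AT_{k+1}(\v_j-\v_l))_{\W_{k+1}}=(\rA_{k+1}(\y_j-\y_l),\v_j-\v_l)_{\W_{k+2}}.
\]
Here $(\rA_{k+1}\y_j)$ is bounded in $\W_{k+2}$ and $(\v_j)$ is Cauchy in $\W_{k+2}$, so this term tends to $0$ as well. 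Hence $(\y_j)$ is Cauchy in $\W_{k+1}$, which proves compactness.

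I expect this integration-by-parts step to be the main obstacle: the whole argument hinges on being able to move $\AT_{k+1}$ onto the $\y$-side, which is exactly what the regularity embedding $\Wp_{k+2}\hookrightarrow\dom(\AT_{k+1})$ and the boundary condition built into $\dom(\rA_{k+1})$ make possible. For the second inclusion $\dom(\A_{k+1})\cap\dom(\A_k^*)\hookrightarrow\W_{k+1}$ I would run the identical scheme using the decomposition \eqref{decomposition of DA with operators} of Assumption \ref{dual regular decomposition}, writing $\x_j=\mathsf{L}^n_{k+1}\x_j+\A_k\mathsf{V}^n_{k+1}\x_j$ with $\mathsf{V}^n_{k+1}\x_j\in\Wp_k\hookrightarrow\dom(\A_k)$, extracting convergent subsequences of $(\mathsf{L}^n_{k+1}\x_j)$ in $\W_{k+1}$ and of $(\mathsf{V}^n_{k+1}\x_j)$ in $\W_k$, and invoking the adjoint identity $(\x_j-\x_l,\A_k(\v_j-\v_l))_{\W_{k+1}}=(\A_k^*(\x_j-\x_l),\v_j-\v_l)_{\W_k}$, valid because $\x_j-\x_l\in\dom(\A_k^*)$.
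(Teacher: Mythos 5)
Your proof is correct and takes essentially the same route as the paper's: the stable regular decomposition via the bounded lifting and potential operators, compactness of the embeddings $\Wp_{k}\hookrightarrow\W_{k}$ to extract Cauchy subsequences of the regular and potential parts, and the adjoint identity (moving $\AT_{k+1}$ onto the $\dom(\rA_{k+1})$-side, resp.\ $\A_k$ onto the $\dom(\A_k^*)$-side) to control the potential contribution. The only cosmetic difference is that you write out the first embedding in full and treat the second by symmetry, while the paper does the reverse.
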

\begin{proof}
  By symmetry, it is sufficient to prove that, under Assumption \ref{assum: G}, it follows from Assumption \ref{dual regular decomposition} that the dense inclusion $\dom(\A_{k+1})\cap\dom(\A_{k}^*)\hookrightarrow \W_{k}$ is a compact operator. In particular, let $(\y_{\ell})_{\ell\in\mathbb{Z}}\subset \dom(\A_{k+1})\cap\dom(\A_{k}^*)$ be an arbitrary sequence that is bounded in $\dom(\A_{k+1})\cap\dom(\A_{k}^*)$. We only need to show that there exists a subsequence $(\y_{\ell_\rho})_{\rho\in\mathbb{Z}}$ that is Cauchy in $\W_k$.

  By Assumption \ref{dual regular decomposition}, for all $\ell\in\mathbb{Z}$, there exist
  $\mathbf{p}^+_{\ell}\in\Wp_{k+1}$ and $\x^+_{\ell}\in\Wp_{k}$ such that
  \begin{align}\label{decomposition compactness property proof domain}
    \y_{\ell}=\mathbf{p}^+_{\ell}+\A_k\x^+_{\ell} && \Big(\text{in particular, }\mathbf{p}^+_{\ell}:=\mathsf{L}^n_{k+1}\y_{\ell} \text{ and } \x^{+}_{\ell}:=\mathsf{V}^n_{k+1}\y_{\ell}\Big).
  \end{align}
  The norm in $\dom(\A_{k+1})\cap\dom(\A_{k}^*)$ is stronger than the norm in
  $\dom(\A_{k+1})$, so since the decomposition is stable by hypothesis \emph{\ref{assum C:
      regular decomposition}} from Assumption \ref{dual regular decomposition}, the
  sequences $(\mathbf{p}^+_{\ell})_{\ell}$ and $(\x^+_{\ell})_{\ell}$ are bounded in
  $\Wp_{k+1}$ and $\Wp_k$, respectively. Under Assumption \ref{assum: G}, we can thus find
  subsequences $(\mathbf{p}^+_{\ell_{\rho}})_{\rho}$ and $(\x^+_{\ell_{\rho}})_{\rho}$
  that are Cauchy in $\W_{k+1}$ and $\W_k$, respectively. Evaluating
  \begin{align*}
    \norm{\y_{\ell_{n}}&-\y_{\ell_{m}}}^2_{\W_{k+1}}
    = \left(\mathbf{p}^+_{\ell_{n}}-\mathbf{p}^+_{\ell_{n}},\y_{\ell_{n}}-\y_{\ell_{n}}\right)_{\W_{k+1}}+\left(\A_k\left(\x^+_{\ell_{n}}-\x^+_{\ell_{n}}\right),\y_{\ell_{n}}-\y_{\ell_{n}}\right)_{\W_{k+1}}\\
    &\leq \norm{\mathbf{p}^+_{\ell_{n}}-\mathbf{p}^+_{\ell_{n}}}_{\W_{k+1}}
    \norm{\y_{\ell_{n}}-\y_{\ell_{n}}}_{\W_{k+1}}+\left(\x^+_{\ell_{n}}-
      \x^+_{\ell_{n}},\A_k^*\left(\y_{\ell_{n}}-\y_{\ell_{n}}\right)\right)_{\W_{k+1}}\\
    &\leq
    \underbrace{\norm{\mathbf{p}^+_{\ell_{n}}-\mathbf{p}^+_{\ell_{n}}}_{\W_{k+1}}}_{\rightarrow
      0\text{ as }n,m\rightarrow 0}\norm{\y_{\ell_{n}}-\y_{\ell_{n}}}_{\W_{k+1}}+
    \underbrace{\norm{\x^+_{\ell_{n}}-\x^+_{\ell_{n}}}_{\W_{k}}}_{\rightarrow 0\text{ as
      }n,m\rightarrow 0}
    \norm{\A_k^*\left(\y_{\ell_{n}}-\y_{\ell_{n}}\right)}_{\W_{k+1}},
  \end{align*}
  we arrive at the conclusion once noticing that
  $\norm{\y_{\ell_{n}}-\y_{\ell_{n}}}_{\W_{k+1}}$ and
  $\norm{\A_k^*\left(\y_{\ell_{n}}-\y_{\ell_{n}}\right)}_{\W_{k+1}}$ are also bounded by
  hypothesis.
\end{proof}

In other words, under Assumption \ref{assum: G}, the stable decompositions of \Cref{sec:
  Bounded regular decompositions} imply complex properties, which as stated in \Cref{sec:
  Hilbert complexes}, guarantee that the associated Hilbert complexes are Fredholm. The
goal of this section is to show that this carries over to the trace spaces. Ultimately,
this is because what is essential for \Cref{lem: compactness property DA} to go through is
not compactness of the spaces, but rather that the potential and lifting operators are
compact operators.

In order to obtain the complex properties for the trace Hilbert complexes, we find it most
convenient to work with the characterizations provided in \Cref{thm: trace Hilbert
  complexes definition}, because it allows us to harness the theory developed in
\Cref{sec: Riesz representatives}. By symmetry, we may focus on \eqref{trace Hilbert
  complex RTt full}.

For any $\x\in\dom(\A_{k})$, it follows from Assumption \ref{dual regular decomposition}
and the commuting relations of \Cref{lem: commutative relations} that
\begin{equation}\label{eq: decomposition under trace}
  \trace_{k}\x = \trace_{k}\mathsf{L}^n_{k}\x + \trace_{k}\A_{k-1}\mathsf{V}^n_k\x = \trace_{k}\mathsf{L}^n_{k}\x - \mathsf{D}^t_{k-1}\trace_{k-1}\mathsf{V}^n_{k}\x. 
\end{equation}
Recall from \Cref{lem: riesz rep in domains} that the $\dom(\A_{k})$-harmonic extension operators $-\AT_k\mathsf{R}_{\scaleto{\dom(\AT_k)}{7pt}}^{-1}:\range(\traceA)\rightarrow\dom(\A_k)$ satisfy $\trace_{\A_k}(-\AT_k\mathsf{R}_{\scaleto{\dom(\AT_k)}{7pt}}^{-1}\bm{\phi})=\bm{\phi}$ for all $\bm{\phi}\in\range(\trace_k)$. Inserting this identity in
\eqref{eq: decomposition under trace} yields the decomposition
\begin{equation}\label{eq: decomposition trace space}
  \bm{\phi} = -\trace_{k}\mathsf{L}^n_{k}\AT_k\mathsf{R}_{\scaleto{\dom(\AT_k)}{7pt}}^{-1}\bm{\phi} + \mathsf{D}^t_{k-1}\trace_{k-1}\mathsf{V}^n_{k}\AT_k\mathsf{R}_{\scaleto{\dom(\AT_k)}{7pt}}^{-1}\bm{\phi}
\end{equation}
for all $\bm{\phi}\in\range(\trace_k)$. 

Compare \eqref{eq: decomposition trace space} with the regular decompositions provided in
\eqref{decomposition of DAT with operators} and \eqref{decomposition of DA with
  operators}. In \eqref{eq: decomposition trace space}, the bounded maps
\begin{align}
  -\trace_{k}\mathsf{L}^n_{k}\AT_k\mathsf{R}_{\scaleto{\dom(\AT_k)}{7pt}}^{-1}:\range(\trace_k)\rightarrow \trace_k(\Wp_{k})\subset\range(\trace_k)
\end{align} 
and 
\begin{align}
  \trace_{k-1}\mathsf{V}^n_{k}\AT_k\mathsf{R}_{\scaleto{\dom(\AT_k)}{7pt}}^{-1}: \range(\trace_k)\rightarrow \trace_{k-1}(\Wp_{k-1})\subset\range(\trace_{k-1})   
\end{align} play the roles of lifting and potential operators. Compactness of these operators as mappings $\range(\trace_k)\rightarrow (\mathring{\W}^{n,+}_{k+1})^{\circ}$ and $\range(\trace_k)\rightarrow (\mathring{\W}^{n,+}_{k})^{\circ}$ follows upon observing that under Assumption \ref{assum: G}, the map 
\begin{equation}\label{compact trace}
  \trace_k:\Wp_{k}\rightarrow(\mathring{\W}^{n,+}_{k})^{\circ}
\end{equation}
is a compact operator, because the product of two bounded linear operators between normed spaces is compact if any one of the operand is \cite[Thm. 2.16]{MR1723850}. To confirm that \eqref{compact trace} is compact, it is sufficient to recall from \Cref{def: trace enviro} that it is the operator associated with the compact bilinear form (cf. \cite[Chap. 3]{MR2361676})
\begin{equation}
  \left\{
    \arraycolsep=1.4pt\def\arraystretch{1.2}
    \begin{array}{rcl}
      \Wp_{k}\times\Wp_{k+1} &\rightarrow &\mathbb{R}\\
      (\x,\y)&\mapsto &(\A_k\x,\imath^{+}_{k+1}\y)_{\W_{k+1}} - (\imath^{+}_k\x,\AT_k\y)_{\W_{k}}
    \end{array}
  \right.
\end{equation}
where we have introduced for clarity the compact inclusions supplied by Assumption \ref{assum: G}.

In the next theorem, the unbounded linear operators
\begin{subequations}
  \begin{align}
    (\mathsf{D}^t_{k})^*&: \dom\left((\mathsf{D}^t_k)^*\right)\subset(\mathring{\W}^{n,+}_{k+2})^{\circ}\rightarrow (\mathring{\W}^{n,+}_{k+1})^{\circ},\\
    (\mathsf{D}^n_{k})^*&: \dom\left((\mathsf{D}^n_{k})^*\right)\subset(\mathring{\W}^{t,+}_{k-1})^{\circ}\rightarrow (\mathring{\W}^{t,+}_{k})^{\circ},
  \end{align}
\end{subequations}
are the Hilbert space adjoints of the closed densely defined unbounded operators 
\begin{align}
  \mathsf{D}^t_{k}:\range(\trace_{k})\subset(\W^{n,+}_{k+1})^{\circ}\rightarrow (\W^{n,+}_{k+2})^{\circ} &&\text{and} && \mathsf{D}^n_{k}:\range(\dualtrace_{k})\subset(\mathring{\W}^{t,+}_{k})^{\circ}\rightarrow (\mathring{\W}^{t,+}_{k-1})^{\circ},
\end{align} 
respectively.
\medskip

\thmbox{%
\begin{theo}
  Under assumptions \ref{density if Wnnot spaces}, \ref{density if Wtnot spaces} and \ref{assum: G}, the inclusions
  \begin{align}\label{trace space compactness property}
    \range(\trace_k)\cap\dom\left((\mathsf{D}^t_{k-1})^*\right)\hookrightarrow (\W^{t,+}_{k+1})^{\circ}
    &&\text{and}
    &&\range(\dualtrace_k)\cap\dom\left((\mathsf{D}^n_{k+1})^*\right)\hookrightarrow (\W^{n,+}_{k})^{\circ}
  \end{align}
  are compact.
\end{theo}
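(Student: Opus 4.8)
The plan is to transplant the proof of \Cref{lem: compactness property DA} from the domain complex to the trace complex \eqref{trace Hilbert complex RTt full}, using the trace-level stable regular decomposition \eqref{eq: decomposition trace space} in place of the domain decomposition \eqref{decomposition of DA with operators}, and the compact trace map \eqref{compact trace} in place of the compact embedding $\imath^+_k$. By the symmetry between the primal and dual settings (exchange $\trace\leftrightarrow\dualtrace$, $\mathsf{D}^t\leftrightarrow\mathsf{D}^n$, and the roles of Assumptions \ref{density if Wnnot spaces} and \ref{density if Wtnot spaces}), it suffices to treat the first inclusion, whose ambient Hilbert space, consistent with \eqref{trace Hilbert complex RTt full} and \Cref{thm: characterization of range Tt}, is $(\mathring{\W}^{n,+}_{k+1})^{\circ}$. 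First I would take a sequence $(\bm{\phi}_\ell)_\ell\subset\range(\trace_k)\cap\dom\bigl((\mathsf{D}^t_{k-1})^{*}\bigr)$ bounded in the graph norm and apply \eqref{eq: decomposition trace space} to each member, obtaining
\begin{equation*}
\bm{\phi}_\ell = \mathbf{P}_\ell + \mathsf{D}^t_{k-1}\bm{\psi}_\ell, \qquad
\mathbf{P}_\ell := -\trace_{k}\mathsf{L}^n_{k}\AT_k\mathsf{R}_{\scaleto{\dom(\AT_k)}{7pt}}^{-1}\bm{\phi}_\ell, \quad
\bm{\psi}_\ell := \trace_{k-1}\mathsf{V}^n_{k}\AT_k\mathsf{R}_{\scaleto{\dom(\AT_k)}{7pt}}^{-1}\bm{\phi}_\ell,
\end{equation*}
where $\mathbf{P}_\ell\in(\mathring{\W}^{n,+}_{k+1})^{\circ}$ and $\bm{\psi}_\ell\in\range(\trace_{k-1})=\dom(\mathsf{D}^t_{k-1})$.

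Next I would invoke the fact, already established just before the theorem, that under Assumption \ref{assum: G} both the lifting operator $-\trace_{k}\mathsf{L}^n_{k}\AT_k\mathsf{R}_{\scaleto{\dom(\AT_k)}{7pt}}^{-1}$ and the operator $\trace_{k-1}\mathsf{V}^n_{k}\AT_k\mathsf{R}_{\scaleto{\dom(\AT_k)}{7pt}}^{-1}$ are compact, being composites of the bounded harmonic-extension, lifting and potential operators with the compact trace map \eqref{compact trace}. Consequently the bounded sequence $(\bm{\phi}_\ell)$ produces precompact images, and after passing to a common subsequence (still indexed by $\ell$) I may assume $(\mathbf{P}_\ell)$ is Cauchy in $(\mathring{\W}^{n,+}_{k+1})^{\circ}$ and $(\bm{\psi}_\ell)$ is Cauchy in $(\mathring{\W}^{n,+}_{k})^{\circ}$. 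The decisive estimate then mirrors \Cref{lem: compactness property DA}: writing $(\cdot,\cdot)$ for the inner products of the relevant annihilator Hilbert spaces and using membership $\bm{\phi}_\ell\in\dom\bigl((\mathsf{D}^t_{k-1})^{*}\bigr)$ to transfer $\mathsf{D}^t_{k-1}$ to its adjoint,
\begin{align*}
\norm{\bm{\phi}_{\ell_n}-\bm{\phi}_{\ell_m}}^2_{(\mathring{\W}^{n,+}_{k+1})^{\circ}}
&= \bigl(\mathbf{P}_{\ell_n}-\mathbf{P}_{\ell_m},\,\bm{\phi}_{\ell_n}-\bm{\phi}_{\ell_m}\bigr)
+ \bigl(\bm{\psi}_{\ell_n}-\bm{\psi}_{\ell_m},\,(\mathsf{D}^t_{k-1})^{*}(\bm{\phi}_{\ell_n}-\bm{\phi}_{\ell_m})\bigr) \\
&\leq \norm{\mathbf{P}_{\ell_n}-\mathbf{P}_{\ell_m}}\,\norm{\bm{\phi}_{\ell_n}-\bm{\phi}_{\ell_m}}
+ \norm{\bm{\psi}_{\ell_n}-\bm{\psi}_{\ell_m}}\,\norm{(\mathsf{D}^t_{k-1})^{*}(\bm{\phi}_{\ell_n}-\bm{\phi}_{\ell_m})}.
\end{align*}

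To close the argument I would observe that, because the sequence is bounded in the graph norm of $\range(\trace_k)\cap\dom\bigl((\mathsf{D}^t_{k-1})^{*}\bigr)$, the factors $\norm{\bm{\phi}_{\ell_n}-\bm{\phi}_{\ell_m}}$ and $\norm{(\mathsf{D}^t_{k-1})^{*}(\bm{\phi}_{\ell_n}-\bm{\phi}_{\ell_m})}$ remain bounded, while the two Cauchy factors $\norm{\mathbf{P}_{\ell_n}-\mathbf{P}_{\ell_m}}$ and $\norm{\bm{\psi}_{\ell_n}-\bm{\psi}_{\ell_m}}$ tend to zero; hence the right-hand side vanishes as $n,m\to\infty$ and the subsequence is Cauchy in $(\mathring{\W}^{n,+}_{k+1})^{\circ}$, which is the claimed compactness. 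The dual inclusion follows verbatim by the symmetry noted above.

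The step I expect to be the main obstacle is the passage $(\mathsf{D}^t_{k-1}\bm{\psi},\bm{\phi})=(\bm{\psi},(\mathsf{D}^t_{k-1})^{*}\bm{\phi})$ in the abstract annihilator Hilbert space $(\mathring{\W}^{n,+}_{k+1})^{\circ}$: whereas \Cref{lem: compactness property DA} transfers $\A_k$ to $\A_k^{*}$ inside a concrete $L^2$-type space, here one must justify the adjoint identity purely at the level of quotient/annihilator spaces. This is precisely where the hypothesis $\bm{\phi}_\ell\in\dom\bigl((\mathsf{D}^t_{k-1})^{*}\bigr)$ is consumed, and it is legitimate only because \Cref{thm: trace Hilbert complexes definition} guarantees that $\range(\trace_{k-1})$ and $\range(\trace_k)$ are genuine Hilbert spaces with $\mathsf{D}^t_{k-1}$ a closed, densely defined unbounded operator between them, so that its Hilbert-space adjoint and the defining pairing exist. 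The remaining points — checking that the composite lifting and potential maps inherit compactness from \eqref{compact trace} and land in the correct annihilator spaces — are routine but notation-heavy verifications.
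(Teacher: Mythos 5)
Your proposal is correct and follows essentially the same route as the paper's own proof: the same trace-level stable decomposition \eqref{eq: decomposition trace space}, the same compact composite lifting and potential operators built from the compact trace map \eqref{compact trace}, and the same closing Cauchy estimate that transfers $\mathsf{D}^t_{k-1}$ onto its adjoint by consuming the hypothesis $\bm{\phi}_\ell\in\dom\bigl((\mathsf{D}^t_{k-1})^{*}\bigr)$. The only deviations are cosmetic: you consistently use the ambient space $(\mathring{\W}^{n,+}_{k+1})^{\circ}$ (silently correcting the paper's mismatched $t$/$n$ superscripts in the theorem statement and proof) and you make explicit the appeal to \Cref{thm: trace Hilbert complexes definition} to justify that $\mathsf{D}^t_{k-1}$ is closed and densely defined so its Hilbert-space adjoint exists, a point the paper leaves implicit in the preamble to the theorem.
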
}

\begin{proof}
  We follow the arguments in the proof of \Cref{lem: compactness property DA}. Let $(\bm{\phi}_{\ell})_{\ell\in\mathbb{Z}}\subset\range(\trace_k)\cap\dom\left((\mathsf{D}^t_k)^*\right)$ be a bounded sequence in $\range(\trace_k)\cap\dom\left((\mathsf{D}^t_k)^*\right)$. 

  The goal is to find a subsequence $(\bm{\phi}_{\ell_{\rho}})_{\rho\in\mathbb{Z}}$ that is Cauchy in $(\W^{t,+}_{k+1})^{\circ}$. 
  Similarly to \eqref{decomposition compactness property proof domain}, we use the stable decomposition in trace spaces \eqref{eq: decomposition trace space}:
  \begin{equation}
    \bm{\phi}_{\ell} = \bm{\xi}_{\ell}^+ + \mathsf{D}^t_{k-1}\bm{\zeta}_{\ell}^+
  \end{equation}
  for all $\ell\in\mathbb{Z}$, where $\bm{\xi}_{\ell}^+:=-\trace_{k}\mathsf{L}^n_{k}\AT_k\mathsf{R}_{\scaleto{\dom(\AT_k)}{7pt}}^{-1}\bm{\phi}_{\ell}$ and $\bm{\zeta}_{\ell}:=\trace_{k-1}\mathsf{V}^n_{k}\AT_k\mathsf{R}_{\scaleto{\dom(\AT_k)}{7pt}}^{-1}\bm{\phi}_{\ell}$. Since the norm in $\range(\trace_k)\cap\dom\left((\mathsf{D}^t_k)^*\right)$ is stronger than the norm in $\range(\mathsf{T}^t_k)$, the sequence $(\bm{\phi}_{\ell})_{\ell\in\mathbb{Z}}$ is bounded in the norm of $\range(\trace_k)$. Hence, by compactness of the operators $-\trace_{k}\mathsf{L}^n_{k}\AT_k\mathsf{R}_{\scaleto{\dom(\AT_k)}{7pt}}^{-1}:\range(\trace_k)\rightarrow (\W^{t,+}_{k+1})^{\circ}$ and $\trace_{k-1}\mathsf{V}^n_{k}\AT_k\mathsf{R}_{\scaleto{\dom(\AT_k)}{7pt}}^{-1}: \range(\trace_k)\rightarrow (\W^{t,+}_{k})^{\circ}$, there exist subsequences $(\bm{\xi}^+_{\ell_{\rho}})_{\rho\in\mathbb{Z}}$ and $(\bm{\zeta}_{\ell_{\rho}}^+)_{\rho\in\mathbb{Z}}$ that are Cauchy in $(\W^{t,+}_{k+1})^{\circ}$ and $(\W^{t,+}_{k})^{\circ}$, respectively.

  Now, we verify that $(\bm{\phi}_{\ell_{\rho}})_{\rho\in\mathbb{Z}}$ is indeed Cauchy in $(\W^{t,+}_{k+1})^{\circ}$. We evaluate directly
  \begin{align*}
    \norm{&\bm{\phi}_{\ell_{n}}-\bm{\phi}_{\ell_{n}}}_{(\W^{t,+}_{k+1})^{\circ}}\\
    &= \,\,\left(\bm{\xi}_{\ell_{n}}-\bm{\xi}_{\ell_{n}},\bm{\phi}_{\ell_{n}}-\bm{\phi}_{\ell_{n}}\right)_{(\W^{t,+}_{k+1})^{\circ}} + \left(\mathsf{D}^t_{k-1}\left(\bm{\zeta}_{\ell_{n}}-\bm{\zeta}_{\ell_{n}}\right),\bm{\phi}_{\ell_{n}}-\bm{\phi}_{\ell_{n}}\right)_{(\W^{t,+}_{k+1})^{\circ}} \\
    &\,\,\leq\norm{\bm{\xi}_{\ell_{n}}-\bm{\xi}_{\ell_{n}}}_{(\W^{t,+}_{k+1})^{\circ}}\norm{\bm{\phi}_{\ell_{n}}-\bm{\phi}_{\ell_{n}}}_{(\W^{t,+}_{k+1})^{\circ}}+\left(\bm{\zeta}_{\ell_{n}}-\bm{\zeta}_{\ell_{n}},(\mathsf{D}^t_{k-1})^*\left(\bm{\phi}_{\ell_{n}}-\bm{\phi}_{\ell_{n}}\right)\right)_{(\W^{t,+}_{k})^{\circ}},
  \end{align*}
  from which we conclude that
  \begin{align*}
    \norm{\bm{\phi}_{\ell_{n}}-\bm{\phi}_{\ell_{n}}}_{(\W^{t,+}_{k+1})^{\circ}}
    \leq&\underbrace{\norm{\bm{\xi}_{\ell_{n}}-\bm{\xi}_{\ell_{n}}}_{(\W^{t,+}_{k+1})^{\circ}}}_{\rightarrow 0\text{ as }m,n\rightarrow 0}\norm{\bm{\phi}_{\ell_{n}}-\bm{\phi}_{\ell_{n}}}_{(\W^{t,+}_{k+1})^{\circ}}\\
    &\quad+\underbrace{\norm{\bm{\xi}_{\ell_{n}}-\bm{\xi}_{\ell_{n}}}_{(\W^{t,+}_{k})^{\circ}}}_{\rightarrow 0\text{ as }m,n\rightarrow 0}\norm{(\mathsf{D}^t_{k-1})^*\left(\bm{\phi}_{\ell_{n}}-\bm{\phi}_{\ell_{n}}\right)}_{(\W^{t,+}_{k})^{\circ}}.
  \end{align*}
  The desired result thus follows because $\norm{\bm{\phi}_{\ell_{n}}-\bm{\phi}_{\ell_{n}}}_{(\W^{t,+}_{k+1})^{\circ}}$ and $\norm{(\mathsf{D}^t_{k-1})^*\left(\bm{\phi}_{\ell_{n}}-\bm{\phi}_{\ell_{n}}\right)}_{(\W^{t,+}_{k})^{\circ}}$ are bounded by hypothesis.
\end{proof}

\begin{cor}
  Under assumptions \ref{density if Wnnot spaces}, \ref{density if Wtnot spaces} and
  \ref{assum: G}, the trace Hilbert complexes introduced in \Cref{thm: trace Hilbert
    complexes definition} are Fredholm.
\end{cor}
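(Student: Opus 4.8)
The plan is to deduce the corollary from the sufficient condition for Fredholmness recorded in \Cref{sec: Hilbert complexes}: a Hilbert complex is Fredholm as soon as it possesses the \emph{compactness property}, meaning that for every index the intersection of the domain of the outgoing operator with the domain of the adjoint of the incoming operator embeds compactly into the ambient Hilbert space. By \Cref{thm: trace Hilbert complexes definition}, the sequences \eqref{trace Hilbert complex RTt full} and \eqref{trace Hilbert complex RTn full} are bona fide Hilbert complexes of closed, densely defined, unbounded operators, so this criterion is applicable and the task reduces to checking the compactness property for each of them.

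First I would spell out what the compactness property requires for the trace Hilbert complex \eqref{trace Hilbert complex RTt full}. The ambient space at index $k$ is $(\mathring{\W}^{n,+}_{k+1})^{\circ}$; the operator leaving this space is $\mathsf{D}^t_k$, which by \Cref{thm: trace Hilbert complexes definition} is densely defined with $\dom(\mathsf{D}^t_k)=\range(\trace_k)$; and the operator entering index $k$ is $\mathsf{D}^t_{k-1}$, whose Hilbert space adjoint is $(\mathsf{D}^t_{k-1})^*$. Consequently the compactness property at index $k$ is exactly the compactness of the embedding
\[
  \range(\trace_k)\cap\dom\!\left((\mathsf{D}^t_{k-1})^*\right)\hookrightarrow(\mathring{\W}^{n,+}_{k+1})^{\circ},
\]
which is precisely the first inclusion asserted to be compact in \eqref{trace space compactness property}. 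By the symmetric reading, the compactness property for \eqref{trace Hilbert complex RTn full} at index $k$ coincides with the compactness of $\range(\dualtrace_k)\cap\dom((\mathsf{D}^n_{k+1})^*)\hookrightarrow(\mathring{\W}^{t,+}_{k})^{\circ}$, the second inclusion in \eqref{trace space compactness property}.

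Since assumptions \ref{density if Wnnot spaces}, \ref{density if Wtnot spaces}, and \ref{assum: G} are in force, the preceding theorem guarantees these compactness statements for every $k\in\mathbb{Z}$. Hence both trace Hilbert complexes satisfy the compactness property and are therefore Fredholm, as claimed.

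The bulk of the analytic effort has already been expended in the preceding theorem, so this argument carries no genuine difficulty. The only step demanding care --- and the place where an error could creep in --- is the index bookkeeping that identifies the abstract compactness criterion (domain of the outgoing operator intersected with the domain of the adjoint of the incoming operator, embedded into the ambient space) with the concrete spaces and operators appearing in \eqref{trace space compactness property}; once the identification $\dom(\mathsf{D}^t_k)=\range(\trace_k)$ supplied by \Cref{thm: trace Hilbert complexes definition} is invoked, the matching is exact and the corollary follows at once.
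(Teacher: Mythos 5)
Your proposal is correct and is exactly the argument the paper intends: the corollary follows immediately by combining the compactness property established in the preceding theorem (the inclusions \eqref{trace space compactness property}) with the sufficient criterion for Fredholmness recalled in \Cref{sec: Hilbert complexes}, and your index bookkeeping identifying $\dom(\mathsf{D}^t_k)=\range(\trace_k)$ and the ambient space $(\mathring{\W}^{n,+}_{k+1})^{\circ}$ matches the setup of \Cref{thm: trace Hilbert complexes definition}. Indeed, your annihilator superscripts are the structurally consistent ones, silently correcting what appears to be a typographical slip ($t$ versus $n$) in the paper's statement of \eqref{trace space compactness property}.
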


It is particularly interesting that while only one decomposition was sufficient to obtain
\Cref{lem: compactness property DA}, we needed \emph{both} decompositions (assumptions
\ref{Assumption continuous and dense embeddings AT} and \ref{dual regular decomposition})
to achieve a proof of the compactness property for the trace Hilbert complex: one for the
space characterization and the other for the decomposition formula itself. The question
whether it is \emph{necessary} to have both remains open.

\begin{example}{Trace de Rham complexes}\label{eg trace de Rham complexes}
  Trace Hilbert complexes for the de Rham complex in 3D arise from the results of \ref{eg: Characterization of trace spaces using quotient spaces}:
  \begin{equation}
    \def\arrowlength{4ex}
    \def\arrowdistance{.4}
    \begin{tikzcd}[column sep=\arrowlength]
      \{0\}	\arrow[d, hookrightarrow, shift left=\arrowdistance, "\imath"] &\{0\}	\arrow[d, hookrightarrow, shift left=\arrowdistance, "\imath"]\\
      \dom(\mathbf{curl}')\subset \widetilde{\H}^{-1}(\Omega)\cap\Hdivnot^{\circ}
      \arrow[d, rightarrow, shift left=\arrowdistance, "\mathbf{curl}'"] &\dom(\mathbf{curl}')\subset \Big(\mathbf{H}^1(\Omega)/\Hn\Big)'\arrow[d, rightarrow, shift left=\arrowdistance, "\mathbf{curl}'"]\\
      \dom(\mathbf{grad}')\subset \widetilde{\H}^{-1}(\Omega)\cap\Hcurlnot^{\circ}
      \arrow[d, rightarrow, shift left=\arrowdistance, "\mathbf{grad}'"]&\dom(\mathbf{grad})\subset\Big(\mathbf{H}^1(\Omega)/\Ht\Big)'\arrow[d, rightarrow, shift left=\arrowdistance, "\mathbf{grad}'"]\\
      \widetilde{H}^{-1}(\Omega)\cap\Honenot^{\circ}\arrow[d, twoheadrightarrow, shift left=\arrowdistance, "0"]&\Big(\Hone/\Honenot\Big)'\arrow[d, twoheadrightarrow, shift left=\arrowdistance, "0"]\\
      \{0\}&\{0\}
    \end{tikzcd}
  \end{equation}
  In light of the de Rham setting \ref{eg: Characterization of trace spaces using quotient
    spaces}, they correspond to 
  \begin{equation}
    \def\arrowlength{3ex}
    \def\arrowdistance{.5}
    \begin{tikzcd}[column sep=\arrowlength]
      \{0\}
      \arrow[r, hookrightarrow, shift left=\arrowdistance, "\imath"] 
      & 
      H^{1/2}(\Gamma)\subset H^{-1/2}(\Gamma)
      \arrow[r, rightarrow, shift left=\arrowdistance, "\mathbf{curl}_{\Gamma}"] 
      & 
      \mathbf{H}^{-1/2}(\text{curl}_{\Gamma},\Gamma)\subset \mathbf{H}^{-1/2}_t
      \ar[r, rightarrow, shift left=\arrowdistance, "\text{curl}_{\Gamma}"] 
      & 
      H^{-1/2}(\Gamma)
      \arrow[r, twoheadrightarrow, shift left=\arrowdistance, "0"] 
      &\{0\}
    \end{tikzcd}
  \end{equation}
  or its rotated version.

  Since by Rellich's lemma the embeddings $\Hone\hookrightarrow\Ltwo$ and
  $\mathbf{H}^1(\Omega)\hookrightarrow\mathbf{L}^2(\Omega)$ are compact, the de Rham
  complexes in \eqref{structure Hilbert complexes 3D de Rham} satisfy Assumption
  \ref{assum: G} with the regular decompositions presented in the de Rham setting \ref{eg: stable regular
    decompositions}. Therefore, the associated trace de Rham complexes are Fredholm. As a
  consequence, their cohomology spaces are finite-dimensional.
\end{example}

\section{Conclusion}
\label{sec:concl}

As we have demonstrated in the present article, it takes only a pair of Hilbert complexes
linked by the sub-complex relationship of their domain complexes to recover essential
aspects of the structures inherent in the trace operators and trace spaces for the de Rham
complex. Relying on notions of trace spaces as dual spaces or quotient spaces, we could
establish detailed characterizations merely assuming the existence of stable regular
decompositions induced by bounded lifting operators. These developments culminated in the
discovery of associated trace Hilbert complexes, which are Fredholm under the mild
additional assumption that the lifting operators are compact.

Hilbert complexes have recently moved into the focus of applied mathematicians, since they
underlie a host of PDE-based mathematical models in areas as diverse as linear elasticity,
gravity, and fluid dynamics. The related complexes are known as the elasticity complex,
\cite[Sect.~11]{AFW06} and \cite{PAZ20b}, conformal complex, or Stokes complex
\cite[Sect.~4.4]{ARH20}. These and many more complexes \cite{PAZ20a,PAZ17t} arise from the
de Rham complex through the powerful Bernstein-Gelfand-Gelfand (BGG) construction, as has
been shown in \cite{ARH20}. Most likely, many more Hilbert complexes relevant for
mathematical modeling still await discovery.

This backdrop lends relevance to our present work. Once the Hilbert complex structure is
established, trace operators and trace spaces become available, which can serve as
stepping stones towards the study of boundary value problems and the development of
integral representations.


\bibliographystyle{siam}
\bibliography{hps}
\end{document}